\newcommand{\lebesgue}{\lambda\mspace{-7mu}\lambda}
\newcommand{\Cat}{\mathscr{C}}
\newcommand{\Prob}{\mathds{P}}
\newcommand{\E}{\mathds{E}}
\newcommand{\V}{\mathds{V}}
\newcommand{\C}{\mathbb{C}}
\newcommand{\R}{\mathbb{R}}
\newcommand{\N}{\mathbb{N}}
\newcommand{\abs}[1]{|{#1}|} 
\newcommand{\bigabs}[1]{\left|{#1}\right|} 
\newcommand{\one}{\mathds{1}}
\newcommand{\Acal}{\mathcal{A}}
\newcommand{\Bcal}{\mathcal{B}}
\newcommand{\Pcal}{\mathcal{P}}
\newcommand{\Ncal}{\mathcal{N}}
\newcommand{\Ecal}{\mathcal{E}}
\newcommand{\Tcal}{\mathcal{T}}
\newcommand*{\defeq}{\mathrel{\vcenter{\baselineskip0.5ex \lineskiplimit0pt
                     \hbox{\scriptsize.}\hbox{\scriptsize.}}}%
                     =}
\newcommand{\integrala}[2]{\left\langle{#1},{#2}\right\rangle}
\newcommand{\de}{\text{d}}
\newcommand{\ubar}[1]{\underline{#1}}
\newcommand{\oneto}[1]{[{#1}]}
\DeclareMathOperator{\Cov}{Cov}
\renewcommand{\Im}{\operatorname{Im}}
\DeclareMathOperator{\tr}{tr}
\DeclareMathOperator{\dist}{dist}
\theoremstyle{plain}
\newtheorem{lemma}{Lemma}
\newtheorem{theorem}[lemma]{Theorem}
\newtheorem{corollary}[lemma]{Corollary}
\theoremstyle{definition}
\newtheorem{definition}[lemma]{Definition}
\newtheorem{remark}[lemma]{Remark}
\theoremstyle{remark}
\begin{document}
\title[Random Band and Block Matrices with Correlated Entries]
{Random Band and Block Matrices with Correlated Entries}
\author[Riccardo Catalano]{Riccardo Catalano}
\author[Michael Fleermann]{Michael Fleermann}
\author[Werner Kirsch]{Werner Kirsch}
\begin{abstract}
In this paper, we derive limit laws for the empirical spectral distributions of random band and block matrices with correlated entries. In the first part of the paper, we study band matrices with \emph{approximately uncorrelated} entries. We strengthen previously obtained results while requiring weaker assumptions, which is made possible by a refined application of the method of moments. In the second part of the paper, we introduce a new two-layered correlation structure we call \emph{SSB-HKW correlated}, which enables the study of structured random matrices with correlated entries. Our results include semicircle laws in probability and almost surely, but we also obtain other limiting spectral distributions depending on the conditions. Simple necessary and sufficient conditions for the limit law to be the semicircle are provided. Our findings strengthen and extend many results already known.
\end{abstract}
\keywords{random band matrices, block matrices, correlated entries, semicircle law}
\subjclass[2020]{60B20} 
\maketitle

\section{Introduction}

This paper is concerned with the study of random band and block matrices with correlated entries. We extend the works of \cite{Hochstattler:Kirsch:Warzel:2016}, \cite{Fleermann:Kirsch:Kriecherbauer:2021}, and \cite{Schenker:Schulz-Baldes:2005} into various directions, and we include and extend the so far unpublished results of the dissertation \cite{Catalano:2016} of the first author. The authors of \cite{Schenker:Schulz-Baldes:2005} have shown the semicircle law in expectation for an ensemble of random matrices that exhibit dependencies within certain equivalence classes of matrix entries, while independence is assumed between different equivalence classes.The authors of \cite{Hochstattler:Kirsch:Warzel:2016} have developed an ensemble they called \emph{approximately uncorrelated}, allowing for equally strong dependencies between all matrix entries irrelevant of their distance, and the semicircle law in probability was observed. In \cite{Fleermann:Kirsch:Kriecherbauer:2021}, the setup of random band matrices with approximately uncorrelated entries has been analyzed deeper, deriving conditions for the semicircle law to hold in probability and almost surely, extending the work of \cite{Hochstattler:Kirsch:Warzel:2016}. For some of the results, the notion of approximately uncorrelated ensembles was tightened to ensembles called \emph{$\alpha$-almost uncorrelated}.

By now, the literature on random matrices with correlated entries has grown tremendously, while in the case of structured random matrices such as band and block matrices, results remain sparse for the correlated case. Therefore, one of our aims is to contribute to this domain. We give a short summary of relevant previous studies: After the seminal paper \cite{Schenker:Schulz-Baldes:2005}, random matrices with more specific correlation structures have been studied in \cite{Friesen:Lowe:2013:a}, \cite{Friesen:Lowe:2013:b}, and \cite{Lowe:2019}, all of which consider the case of symmetric random matrices with independent diagonals, but with different assumption on the correlations within these diagonals. Further, the authors of \cite{Banna:Merlevede:Peligrad:2015} study the case of symmetric random matrices with entries which are functions of shifted i.i.d.\ random fields. The papers \cite{Hochstattler:Kirsch:Warzel:2016}, \cite{Kirsch:Kriecherbauer:2018}, and \cite{Fleermann:Kirsch:Kriecherbauer:2021} focus on semicircle laws for approximately uncorrelated entries and in particular Curie-Weiss models, with a local version of their results contained in \cite{Fleermann:Kirsch:Kriecherbauer:2021:b}. Further results in the local regime were achieved in \cite{Che:2017} for finite range correlations, in \cite{Ajanki:Erdos:Kruger:2016} for correlated Gaussian entries and in \cite{Erdos:Kruger:2017} for more general correlations.
For structured random matrices with correlations, see
\cite{Kemp:Zimmermann:2020} (and references therein) for arbitrary correlations within $\log$-size families of matrix entries, but with independence between these families, \cite{Kirsch:Kriecherbauer:2020} for random band matrices with exchangeable entries and \cite{Fleermann:Kirsch:Kriecherbauer:2021} for random band matrices with approximately uncorrelated entries.

This paper is organized as follows: Section~\ref{sec:bandmatrix} contains the first main theorem of this paper, Theorem~\ref{thm:bandmatrix}. There, we show that the additional assumptions made in \cite{Fleermann:Kirsch:Kriecherbauer:2021} can be dropped  without replacement (thus only imposing the conditions as in \cite{Hochstattler:Kirsch:Warzel:2016}), while obtaining stronger results. These improvements are achieved by employing a refined way of applying the method of moments, where we use observations in \cite{Fleermann:2015} (which were also presented in \cite{Fleermann:2019}). Further, by a standard perturbation argument, we also derive that our results remain true for non-periodic band matrices. In Section~\ref{sec:weightedmatrix} we formulate the second main result of this paper: Theorem~\ref{thm:weightedmatrix}. This theorem was initially motivated by an extension of the results in \cite{Fleermann:Kirsch:Kriecherbauer:2021} to band matrices with correlated entries and a bandwidth of proportional growth. However, we study this problem in a much more general setting, including and extending the yet unpublished results \cite{Catalano:2016}, also generalizing the findings in \cite{Molchanov:Pastur:Khorunzhy:1992}. First, we revisit the work \cite{Schenker:Schulz-Baldes:2005} and drop the assumption that entries from different equivalence classes should be independent. Instead, matrix entries are assumed to be approximately uncorrelated between different equivalence classes, and arbitrarily correlated within these classes. Thus, our model has two layers of correlations and will be called \emph{SSB-HKW correlated ensemble}, in honor of the authors that invented these correlation structures. Second, as in \cite{Molchanov:Pastur:Khorunzhy:1992}, we introduce a weight function which will then allow us to study band and block matrices, and also allow for limits different from the semicircle distribution. Necessary and sufficient conditions for the semicircle law are provided and coincide with the conditions in \cite{Molchanov:Pastur:Khorunzhy:1992}. Lastly, while the findings in \cite{Schenker:Schulz-Baldes:2005} pertained to weak convergence in expectation, we show that their results also hold weakly in probability, and we derive mild sufficient conditions for the convergence results to also hold weakly almost surely. Examples that fit into the framework of our main Theorems~\ref{thm:bandmatrix} and~\ref{thm:weightedmatrix} include Curie-Weiss and correlated Gaussian ensembles.

\section{Random band matrices with approximately uncorrelated entries}
\label{sec:bandmatrix}
\subsection{Setup and Results}
\label{sec:AUsetup} Let $(a_n)_n$ be a sequence of random matrices, where $a_n$ is a symmetric $n\times n$ matrix and the random variables $a_n(p,q)$ are real-valued. Using the notation $\oneto{\ell}\defeq\{1,\ldots,
\ell\}$ for $\ell\in\N$, we will call pairs $(p_1,q_1),\ldots,(p_{\ell},q_{\ell})\in\oneto{n}^2$ fundamentally different, if for all $1\leq i\neq j \leq \ell: \{p_i,q_i\}\neq\{p_j,q_j\}$. Note that if $P_1,\ldots P_{\ell}\in\oneto{n}^2$ are fundamentally different, then $a_n(P_1),\ldots,a_n(P_{\ell})$ are pairwise different matrix elements, even modulo symmetry.
 We assume the following conditions:
There exist sequences of constants $(C(k))_k$ and $(C^{(\ell)}_n)_{n,\ell}$ such that for all $\ell\in\N$, $\lim_{n\to\infty} C^{(\ell)}_n = 0$ and such that for all $N,\ell\in\N$ and all fundamentally different pairs $P_1,\ldots,P_{\ell}\in\oneto{n}^2$,
\begin{align}
\forall\,\delta_1,\ldots,\delta_{\ell}\in\N:\,\forall\,n\geq N: \abs{\E a_n(P_1)^{\delta_1}\cdots a_n(P_{\ell})^{\delta_{\ell}}}\ &\leq\ \frac{C(\delta_1 + \ldots + \delta_{\ell})}{n^{\frac{1}{2}\#\{i\in\oneto{\ell}|\delta_i=1\}}}\label{eq:AU1}\tag{AU1}\\
\forall\, n\geq N: \bigabs{\E a_n(P_1)^2\cdots a_n(P_{\ell})^2 - 1}\ &\leq\ C^{(\ell)}_n	\label{eq:AU2}\tag{AU2}
\end{align}
We will call a sequence $(a_n)_n$ with the properties above an \emph{approximately uncorrelated triangular scheme} (cf.\ \cite{Hochstattler:Kirsch:Warzel:2016}, \cite{Fleermann:Kirsch:Kriecherbauer:2021}). Note that \eqref{eq:AU1} entails that all random variables $a_n(p,q)$ have uniformly bounded absolute moments of all orders, for example, $\E\abs{a_n(p,q)}^k\leq (\E (a_n(p,q))^{2k})^{1/2}\leq C(2k)^{1/2}$. Further, by \eqref{eq:AU1} and \eqref{eq:AU2}, the entries $a_n(p,q)$ need not have zero expectation nor unit variance, but these properties do hold asymptotically.

For all $n\in\N$, each element $b_n\in\{b\in\N\,|\, 1\leq b <n,\, b\text{ odd}\}\cup\{n\}$ will be called \emph{$(n)$-bandwidth}. If $b_n$ is a bandwidth, we call an index pair $(p,q)\in\oneto{n}^2$ \emph{$b_n$-relevant}, if $b_n=n$ or $\abs{p-q}\leq(b_n-1)/2$ or $\abs{p-q}\geq n - (b_n-1)/2$. In particular, if $b_n=n$, then all pairs $(p,q)\in\oneto{n}^2$ are $b_n$-relevant, but if $b_n< n$, $(p,q)$ is $b_n$-relevant iff
\[
\abs{p-q} \leq \frac{b_n-1}{2} \qquad\text{or}\qquad \abs{p-q} \geq n - \frac{b_n-1}{2}.
\] 
If $b_n$ is a bandwidth, then an $n\times n$ matrix $X$ will be called periodic band matrix with bandwidth $b_n$ if $X(p,q)=0$ for all $(p,q)\in\oneto{n}^2$ which are not $b_n$-relevant. For example, an $8\times 8$ periodic band matrix $X$ with bandwidth $5$ has the structure
\begin{equation*}
X=
\begin{pmatrix}
x_{1,1} & x_{1,2} &	x_{1,3}	& 0	& 0	& 0	& x_{1,7} & x_{1,8} \\
x_{2,1} & x_{2,2} & x_{2,3}	& x_{2,4} &	0 & 0 & 0 &	x_{2,8} \\
x_{3,1}	& x_{3,2} & x_{3,3}	& x_{3,4} &	x_{3,5}	& 0 & 0 & 0	\\
0 & x_{4,2} & x_{4,3} & x_{4,4} & x_{4,5} & x_{4,6} & 0 & 0 \\
0 & 0 & x_{5,3} & x_{5,4} & x_{5,5} 	& x_{5,6} &	x_{5,7} & 0 \\
0 & 0 & 0 & x_{6,4} & x_{6,5} &	x_{6,6} 	& x_{6,7} &	x_{6,8} 	\\
x_{7,1} & 0 & 0 & 0 & x_{7,5} & x_{7,6} & x_{7,7} &	x_{7,8} 	\\
x_{8,1} & x_{8,2} & 0 & 0 & 0 & x_{8,6} & x_{8,7} &	x_{8,8} 	\\
\end{pmatrix}.
\end{equation*}
The bandwidth can be interpreted as the number of allowable non-trivial entries in the "middle row" of the matrix. Given an approximately uncorrelated triangular scheme $(a_n)_n$ as above and bandwidths $(b_n)_n$, we define the periodic band matrices $a_n^b$ by
\[
\forall\, n\in\N:\,\forall\, P\in\oneto{n}^2:\, a_n^b(P) \defeq \begin{cases}
 a_n(P) &\quad \text{if $P$ is $b_n$-relevant,}\\
 0	& \quad\text{otherwise.}
 \end{cases}
 \]
 Finally, if $(a_n)_n$ is an approximately uncorrelated triangular scheme, $(b_n)_n$ is a sequence of bandwidths, we say that a sequence of periodic random band matrices $(X_n)_n$ \emph{is based on the triangular scheme $(a_n)_n$ with bandwidths $(b_n)_n$}, if for all $n\in\N$, $X_n = \frac{1}{\sqrt{b_n}}a_n^b$.
 Note that by choosing the bandwidth $b_n=n$ we obtain $X_n=\frac{1}{\sqrt{n}}a_n$, so the case of full matrices is covered by our setup. The first contribution of this paper is the following theorem, which generalizes Theorem 2.3 in \cite{Fleermann:Kirsch:Kriecherbauer:2021}, since we obtain a stronger result with fewer assumptions.
 \begin{theorem}
 \label{thm:bandmatrix}
 Let $(X_n)_n$ be a sequence of periodic random band matrices which are based on an approximately uncorrelated triangular scheme $(a_n)_n$ with bandwidths $(b_n)_n$, where $b_n\to\infty$. Then the following statements hold:
 \begin{enumerate}[i)]
 \item The semicircle law holds for $(X_n)_n$ in probability.
 \item If there exists a $p\in \N$ such that $b_n^{-p}$ is summable, and if for all $\ell\in\N$, the sequences $C^{(\ell)}_n$ are summable, then the semicircle law holds for $(X_n)_n$ almost surely.
 \end{enumerate}
 \end{theorem}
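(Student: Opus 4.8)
The plan is to use the method of moments, following the standard approach for semicircle laws but with the refinements needed to handle the band structure and the approximate uncorrelatedness. First I would reduce the problem to showing that the expected moments $\E\left[\frac{1}{n}\tr X_n^k\right]$ converge to the moments $m_k$ of the semicircle distribution (where $m_{2k+1}=0$ and $m_{2k}=C_k$, the Catalan numbers), together with a variance bound $\V\left[\frac{1}{n}\tr X_n^k\right]\to 0$ for part (i), and a summable variance bound $\sum_n \V\left[\frac{1}{n}\tr X_n^k\right]<\infty$ for part (ii) via Borel--Cantelli. The convergence of moments plus vanishing variance yields weak convergence in probability of the empirical spectral distribution to the semicircle law; for the almost sure statement one also needs a uniform bound or the summability of the $C^{(\ell)}_n$ to control the fluctuations along the full sequence.

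The core computation expands
\[
\E\left[\frac{1}{n}\tr X_n^k\right] = \frac{1}{n\, b_n^{k/2}}\sum_{i_1,\ldots,i_k} \E\left[a_n^b(i_1,i_2)a_n^b(i_2,i_3)\cdots a_n^b(i_k,i_1)\right],
\]
and the strategy is to organize the sum over closed index paths $(i_1,\ldots,i_k,i_1)$ according to the partition of edges induced by the multiset $\{\{i_j,i_{j+1}\}\}$, i.e.\ according to which edges coincide modulo symmetry. Using \eqref{eq:AU1}, any closed path in which some edge is traversed exactly once contributes a factor $n^{-1/2}$ per such edge and is asymptotically negligible after accounting for the number of free indices; this is where the refined bookkeeping from \cite{Fleermann:2015} enters, allowing one to avoid the extra hypotheses of \cite{Fleermann:Kirsch:Kriecherbauer:2021}. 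The dominant contribution comes from paths where every edge is used exactly twice and the associated graph is a tree (the classical pair-partition/Catalan combinatorics): for such a path \eqref{eq:AU2} gives that the expectation is $1+o(1)$ uniformly, and the band constraint means that once $i_1$ is fixed there are roughly $b_n$ choices for each of the $k/2$ ``independent'' increments along the tree, giving $n\cdot b_n^{k/2}\cdot C_{k/2}\cdot(1+o(1))$ paths, which after normalization yields exactly $C_{k/2}$. One must check that the periodicity of the band makes the counting of $b_n$-relevant steps clean (each tree edge corresponds to a free increment in $\{-(b_n-1)/2,\ldots,(b_n-1)/2\}$ modulo $n$), and that paths with a repeated edge but a cycle in the graph have too few free indices to survive.

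For the variance estimates I would expand $\V\left[\frac{1}{n}\tr X_n^k\right]$ as a sum over pairs of closed paths $(\mathbf{i},\mathbf{j})$ of length $k$ and show that the ``disconnected'' contributions cancel against the product of expectations, so that only pairs of paths sharing at least one edge survive; a counting argument analogous to the one above — now using \eqref{eq:AU1} and \eqref{eq:AU2} on the union of the two paths, with the band constraint limiting free indices — shows this is $O(b_n^{-1})$ for part (i) (hence $o(1)$ since $b_n\to\infty$), and $O(b_n^{-1} + \text{terms controlled by } C^{(\ell)}_n)$ for part (ii), so that under the hypothesis that $b_n^{-p}$ is summable for some $p$ (apply the bound to $k$ large, or take powers) and the $C^{(\ell)}_n$ are summable, the variances are summable and Borel--Cantelli applies. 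Finally the extension to non-periodic band matrices follows, as the authors indicate, from a rank/perturbation argument: the difference between the periodic and non-periodic band matrix has rank $O(b_n)$, which is $o(n)$ when $b_n/n\to 0$, and bounded-rank perturbations do not affect the limiting spectral distribution (and for $b_n$ of proportional growth the periodic correction is itself a band matrix one handles directly). The main obstacle I anticipate is making the negligibility of the ``single-edge'' and ``cyclic'' path classes genuinely quantitative under only \eqref{eq:AU1}–\eqref{eq:AU2} — i.e.\ carrying out the refined moment method of \cite{Fleermann:2015} carefully enough that no auxiliary assumption on higher correlations is needed — and simultaneously keeping the band-width powers of $b_n$ balanced against the index-counting in both the mean and the variance computations.
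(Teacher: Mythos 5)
Your outline follows the classical route: show $\E\langle\sigma_n,x^k\rangle\to\langle\sigma,x^k\rangle$ and control $\V\langle\sigma_n,x^k\rangle$, with Borel--Cantelli for the almost sure statement. This is essentially the strategy of \cite{Fleermann:Kirsch:Kriecherbauer:2021}, and it is precisely the strategy the paper abandons, because it does not deliver part $ii)$ under the stated hypotheses. The gap is in your treatment of the almost sure case: the variance of $\frac{1}{n}\tr X_n^k$ is only $O(b_n^{-1})$ (as you say), and the hypothesis is merely that $b_n^{-p}$ is summable for \emph{some} $p\in\N$. Take $b_n=n^{1/3}$: then $b_n^{-4}$ is summable, so the hypothesis of $ii)$ holds, but $b_n^{-1}=n^{-1/3}$ is not summable, so your variance bound gives nothing via Borel--Cantelli. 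Your parenthetical fix ``apply the bound to $k$ large, or take powers'' does not repair this: increasing $k$ changes which moment of the ESD you control, not the concentration rate; and ``taking powers'' means estimating high central moments of the full trace, which reintroduces a multitude of mixed terms between paths of different edge-multiplicity profiles (e.g.\ a single edge of one path colliding with a double edge of another, producing a triple edge and destroying the $n^{-1/2}$ decay from \eqref{eq:AU1} without a compensating gain in the index count). These mixed terms are exactly what forced the extra assumption (AAU3) in \cite{Fleermann:Kirsch:Kriecherbauer:2021}, and the paper explicitly notes (citing their Remark 4.12) that a high-moment analysis of the full trace is not promising.

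The paper's actual proof circumvents this by first decomposing $\langle\sigma_n,x^k\rangle=\sum_{\pi\in\Pi(k)}D_n^{(\pi)}$ according to the edge-multiplicity profile $\pi$ of the closed path, and then proving stochastic convergence of each summand separately (using the fact, from \cite{Fleermann:2015}, that momentwise stochastic convergence suffices for the semicircle law). Within a fixed profile class one never has to compare a single-edge path with a backtracking path, and the $z$-th central moment of each $D_n^{(\pi)}$ can be bounded by $O(b_n^{-z})$ for arbitrary even $z$ (Lemma~\ref{lem:overlapsetcount} does the overlap counting); choosing $z\geq p$ then makes the bound summable. A secondary inaccuracy in your sketch: in the correlated setting the edge-disjoint pairs of paths do \emph{not} cancel against the product of expectations -- their contribution is only controlled via \eqref{eq:AU1} and \eqref{eq:AU2}, which is exactly why summability of the $C_n^{(\ell)}$ appears as a hypothesis in $ii)$. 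So your computation of the first moment and the identification of the Catalan contribution are fine, but the fluctuation analysis needs the profile-wise decomposition and high central moments to close part $ii)$, and without it your argument proves only a weaker statement.
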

 \begin{remark}
 In Theorem~\ref{thm:bandmatrix} $ii)$, if all entries in each $a_n$ are $\{\pm 1\}$-valued, the condition on the summability of the sequences $C^{(\ell)}_n$ can be dropped, since then in \eqref{eq:AU2}, $C^{(\ell)}_n\defeq 0$ is a valid choice for these constants. Further, we point out that in particular, all bandwidths $b_n$ with $b_n \sim n^{\delta}$ for some $\delta\in(0,1)$ are covered by Theorem~\ref{thm:bandmatrix} $ii)$.
 \end{remark}

\subsection{Examples} We first note that Wigner matrices fit our framework, since their non-standardized versions satisfy \eqref{eq:AU1} and \eqref{eq:AU2}. For the independent case, statements stronger than those in Theorem~\ref{thm:bandmatrix} are known, see \cite{Fleermann:Kirsch:Kriecherbauer:2021} and references therein. The main contribution of Theorem~\ref{thm:bandmatrix} is thus for the truly dependent case. Therefore, we will now introduce two prominent ensembles of random matrices with dependent (even correlated) entries that fit our framework. These models will also serve as examples for the second part of this paper.

\subsubsection{Curie-Weiss ensembles.} 
\label{sec:curieweiss}

\begin{definition}\label{def:curieweiss}
Let $n\in\N$ be arbitrary and $Y_1,\ldots,Y_n$ be random variables defined on some probability space $(\Omega,\Acal,\Prob)$. Let $\beta>0$, then we say that $Y_1,\ldots,Y_n$ are \emph{Curie-Weiss($\beta$,$n$)-distributed\label{sym:CurieWeissdist}}, if for all $y_1,\ldots,y_n\in\{-1,1\}$ we have that
\[
\Prob(Y_1=y_1,\ldots,Y_n=y_n) = \frac{1}{Z_{\beta,n}}\cdot e^{\frac{\beta}{2n}\left(\sum y_i\right)^2},
\]
where $Z_{\beta,n}$\label{sym:CWconstant} is a normalization constant. The parameter $\beta$ is called \emph{inverse temperature}.
\end{definition}

The Curie-Weiss($\beta,n$) distribution is used to model the behavior of $n$ ferromagnetic particles at inverse temperature $\beta$. At large values of $\beta$ (low temperatures), all spins are likely to have the same alignment, modeling strong magnetization. At high temperatures, however, the spins can act almost independently, resembling weak magnetization. We refer the reader to \cite{Kirsch:2015} for a self-contained treatment of the Curie-Weiss distribution.

\begin{definition}
\label{def:CWensemble}
Let $0<\beta\leq 1$ and let the random variables $(\tilde{a}_n(i,j))_{1\leq i,j \leq n}$ be Curie-Weiss($\beta,n^2$)-distributed for each $n\in\N$ . Define the triangular scheme $(a_n)_n$ by setting
\[
\forall\,n\in\N:\,\forall\, (i,j)\in\oneto{n}^2:~a_n(i,j) =
\begin{cases}
\tilde{a}_n(i,j) & \text{if $i\leq j$}\\
\tilde{a}_n(j,i) & \text{if $i> j$}.
\end{cases}
\]
Then $(a_n)_n$ will be called \emph{Curie-Weiss($\beta$) ensemble}.	
\end{definition}

\begin{corollary}\label{cor:CurieWeiss}
Let $(a_n)_n$ be a Curie-Weiss ensemble at inverse temperature $\beta\in(0,1]$. Let $(b_n)_n$ be a sequence of $n$-bandwidths and $(X_n)_n$ be the periodic random band matrices which are based on $(a_n)_n$ with bandwidths $(b_n)_n$. Then the following statements hold:
 \begin{enumerate}[i)]
 \item If $b_n\to\infty$, then the semicircle law holds for $(X_n)_n$ in probability.
 \item If for some $p\in\N$, $\frac{1}{b_n^p}$ is summable over $n$, then the semicircle law holds almost surely for $(X_n)_n$.
 \end{enumerate}

\end{corollary}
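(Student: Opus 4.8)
The plan is to verify that a Curie--Weiss($\beta$) ensemble with $\beta\in(0,1]$ is an approximately uncorrelated triangular scheme in the sense of Section~\ref{sec:AUsetup} and then to quote Theorem~\ref{thm:bandmatrix}. Condition \eqref{eq:AU2} comes for free: the $\tilde a_n(i,j)$, and hence all entries $a_n(p,q)$, take values in $\{-1,1\}$, so $a_n(P)^2\equiv 1$, $\E a_n(P_1)^2\cdots a_n(P_\ell)^2=1$ exactly, and $C^{(\ell)}_n\defeq 0$ is admissible. By the Remark after Theorem~\ref{thm:bandmatrix} this also removes the summability requirement on the $C^{(\ell)}_n$ in part $ii)$, so that for the almost sure statement only the summability of $b_n^{-p}$ is needed; since that summability forces $b_n\to\infty$, part $i)$ applies under either hypothesis.

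The real work is \eqref{eq:AU1}. Fix fundamentally different $P_1,\dots,P_\ell\in\oneto{n}^2$ and exponents $\delta_1,\dots,\delta_\ell$. As the entries are $\pm1$-valued, $a_n(P_i)^{\delta_i}$ equals $a_n(P_i)$ if $\delta_i$ is odd and $1$ if $\delta_i$ is even, so $\E a_n(P_1)^{\delta_1}\cdots a_n(P_\ell)^{\delta_\ell}=\E\prod_{i\in I}a_n(P_i)$ with $I=\{i:\delta_i\text{ odd}\}$. Since the $P_i$ are fundamentally different, $\{a_n(P_i):i\in I\}$ are $k\defeq\#I$ distinct spins among the $N\defeq n^2$ Curie--Weiss spins, and by exchangeability the moment depends only on $k$; write it $\mu_k(\beta,N)$. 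The claim to prove is that $\mu_k=0$ for odd $k$ and $\abs{\mu_k(\beta,N)}\leq C_k N^{-k/4}=C_k n^{-k/2}$ for all $N$. Granting this, with $m\defeq\#\{i:\delta_i=1\}\leq k$ we get $\abs{\E a_n(P_1)^{\delta_1}\cdots a_n(P_\ell)^{\delta_\ell}}\leq C_k n^{-k/2}\leq C_k n^{-m/2}$, and since $k\leq\delta_1+\dots+\delta_\ell$ we may set $C(\delta_1+\dots+\delta_\ell)\defeq\max_{k\leq\delta_1+\dots+\delta_\ell}C_k$, which is exactly \eqref{eq:AU1}.

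For the bound on $\mu_k$ I would use the Hubbard--Stratonovich representation of the Curie--Weiss measure (see \cite{Kirsch:2015}): with $N=n^2$, there is a real random variable $\xi_N$ with Lebesgue density proportional to $\exp(-\tfrac{N}{2\beta}t^2)(2\cosh t)^N$ such that, conditionally on $\xi_N$, the spins are i.i.d.\ with mean $\tanh\xi_N$; hence $\mu_k(\beta,N)=\E[(\tanh\xi_N)^k]$. This density is even, so $\xi_N$ is symmetric and $\mu_k=0$ for odd $k$, while $\abs{\tanh x}\leq\abs{x}$ gives $\abs{\mu_k(\beta,N)}\leq\E\abs{\xi_N}^k$. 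A Laplace analysis of the density then finishes the argument: the exponent $N\phi(t)$ with $\phi(t)=\log(2\cosh t)-t^2/(2\beta)$ attains its unique maximum at $t=0$, which is non-degenerate of order $2$ when $\beta<1$ and of order $4$ when $\beta=1$ (there $\phi(t)=\log 2-t^4/12+O(t^6)$); rescaling $t=uN^{-1/2}$, resp.\ $t=uN^{-1/4}$, shows that $N^{1/2}\xi_N$, resp.\ $N^{1/4}\xi_N$, converges in law with all moments, whence $\E\abs{\xi_N}^k=O(N^{-k/2})$ for $\beta<1$ and $O(N^{-k/4})$ for $\beta=1$, uniformly in $N$; in both cases $\abs{\mu_k}\leq C_k N^{-k/4}$. (Alternatively, one may simply quote that the Curie--Weiss ensemble is approximately uncorrelated from \cite{Hochstattler:Kirsch:Warzel:2016} and \cite{Fleermann:Kirsch:Kriecherbauer:2021}.) Once \eqref{eq:AU1} and \eqref{eq:AU2} hold, $(a_n)_n$ is an approximately uncorrelated triangular scheme and the corollary follows from Theorem~\ref{thm:bandmatrix} and the Remark. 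The main obstacle is the uniform Laplace estimate in the critical case $\beta=1$, where the decay $N^{-k/4}=n^{-k/2}$ is precisely the rate that \eqref{eq:AU1} demands; for $\beta>1$ the spins become macroscopically correlated and the scheme fails to be approximately uncorrelated, which is why the hypothesis $\beta\leq 1$ is essential.
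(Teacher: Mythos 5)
Your argument is correct and follows the same route as the paper: verify that the Curie--Weiss($\beta$) ensemble is an approximately uncorrelated triangular scheme with $C^{(\ell)}_n=0$ (so the summability hypothesis in part $ii)$ is vacuous) and then invoke Theorem~\ref{thm:bandmatrix}. The only difference is that the paper simply cites \cite{Hochstattler:Kirsch:Warzel:2016} and Theorem 5.17 of \cite{Kirsch:2015} for condition \eqref{eq:AU1}, whereas you sketch the proof of that input via the Hubbard--Stratonovich representation and a uniform Laplace estimate -- which is precisely the content of the cited result, including the critical decay rate $N^{-k/4}=n^{-k/2}$ at $\beta=1$.
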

\begin{proof}
In \cite{Hochstattler:Kirsch:Warzel:2016} and \cite{Kirsch:2015}, it was shown that that $(a_n)_n$ is approximately uncorrelated with constants $C^{(\ell)}_n\defeq 0$ and $C(\ell)\defeq (\ell-1)!!$ for $\beta\in(0, 1/2]$, $C(\ell)\defeq (\ell-1)!!(\beta/(1-\beta))^{\ell/2}$ for $\beta\in(1/2,1)$ and  $C(\ell)\defeq 12^{\ell/4}\Gamma((\ell+1)/4)/\Gamma(1/4)$ for $\beta=1$, cf.\ Theorem 5.17 in \cite{Kirsch:2015}. Therefore, the statement follows with Theorem~\ref{thm:bandmatrix}.
\end{proof}

\subsubsection{Correlated Gaussian ensembles.}
\label{sec:corgauss}

\noindent We impose the following assumptions on the triangular scheme $(a_n)_n$, where due to symmetry, we need only specify the upper right triangle:
\begin{enumerate}
\item $\forall\, n\in\N: (a_n(p,q))_{1\leq p\leq q\leq n}\sim \Ncal(0,\Sigma_n)$, where $\Sigma_n$ is a positive definite $(n(n+1)/2)\times(n(n+1)/2)$ covariance matrix, indexed by pairs $(p,q),(r,s)$ with $1\leq p\leq q \leq n$ and $1\leq r\leq s \leq n$. In this case, we also set $\Sigma_n((q,p),(r,s)) \defeq \Sigma_n((p,q),(r,s))$ and similarly if the second pair (or both pairs) is flipped to account for the symmetry of $a_n$. 
\item We assume the sequence $(\Sigma_n)_n$ satisfies for all $n\in\N$ and all fundamentally different pairs $(p,q),(r,s)\in\oneto{n}^2$:
\begin{align}
	& \abs{\E a_n(p,q)a_n(r,s)} = \abs{\Sigma_n((p,q),(r,s))} \leq \frac{1}{n}.\label{eq:corrdecay}\\
	& \E (a_n(p,q))^2 = \abs{\Sigma_n((p,q),(p,q))} = 1. \label{eq:unitvar}
\end{align}

\end{enumerate}
Then we call $(a_n)_n$ an \emph{approximately uncorrelated Gaussian ensemble}. For clarity reasons, the setup we chose here differs from the expositions in \cite{Fleermann:2019} and \cite{Fleermann:Kirsch:Kriecherbauer:2021}. Note also that indexing the covariance matrices by pairs leads to no significant ambiguity, since positive definiteness and conditions \eqref{eq:corrdecay} and \eqref{eq:unitvar} are not affected by the index order. But with this new indexing, we are able to give a very brief proof of the following lemma (cf.\ Lemmas 3.9 and 3.10 in \cite{Fleermann:Kirsch:Kriecherbauer:2021}), which we include in Appendix~\ref{sec:ProofAUGaussian} for the convenience of the reader.
\begin{lemma}
\label{lem:AUGaussian}
	If $(a_n)_n$ is an approximately uncorrelated Gaussian ensemble, then $(a_n)_n$ is an approximately uncorrelated triangular scheme. More precisely it satisfies conditions \eqref{eq:AU1} with constants $C(\ell)\defeq\Pcal\Pcal(\ell)$ and \eqref{eq:AU2} with sequences $C^{(\ell)}_n\defeq \Pcal\Pcal(2\ell)/n^2$, which are summable over $n$.
\end{lemma}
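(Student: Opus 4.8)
The plan is to exploit the new pair-indexed bookkeeping together with Wick's formula (Isserlis' theorem), which is available because all $a_n(p,q)$ are jointly Gaussian. Fix fundamentally different pairs $P_1,\dots,P_\ell\in\oneto{n}^2$ and exponents $\delta_1,\dots,\delta_\ell\in\N$, and write $\delta\defeq\delta_1+\dots+\delta_\ell$. First I would expand $\E\,a_n(P_1)^{\delta_1}\cdots a_n(P_\ell)^{\delta_\ell}$ by Wick's formula as a sum over pair partitions $\pi$ of the $\delta$ "slots" (where slot multiplicities are $\delta_1,\dots,\delta_\ell$), each term being a product of $\delta/2$ covariances $\Sigma_n(\cdot,\cdot)$; in particular the expectation vanishes unless $\delta$ is even. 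For each such $\pi$, count how many of its $\delta/2$ Wick pairs are \emph{diagonal} (join two slots carried by the same $P_i$) versus \emph{off-diagonal} (join slots carried by two different $P_i$, $P_j$, which by fundamental difference forces $\{P_i\}\neq\{P_j\}$ and hence $|\Sigma_n(P_i,P_j)|\le 1/n$ by \eqref{eq:corrdecay}); diagonal covariances are exactly $1$ by \eqref{eq:unitvar}. So each Wick term is bounded in absolute value by $n^{-(\#\text{off-diagonal pairs in }\pi)}$, and the number of pair partitions is $\mathcal{PP}(\delta)$ (the number of pairings of $\delta$ points), which depends only on $\delta$.

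The crux is the exponent count in \eqref{eq:AU1}, namely showing that every Wick term contributes at most $n^{-\frac12\#\{i:\delta_i=1\}}$. The key combinatorial observation: if $\delta_i=1$, the single slot of $P_i$ must be matched to a slot of some \emph{other} $P_j$ (it cannot pair with itself), so it lies in an off-diagonal Wick pair. Hence, letting $S\defeq\{i:\delta_i=1\}$, each $i\in S$ participates in at least one off-diagonal pair; since each off-diagonal pair uses two slots, and a single off-diagonal pair can "cover" at most two indices of $S$, the number of off-diagonal pairs in $\pi$ is at least $\tfrac12\#S$. (Slightly more carefully: the off-diagonal pairs whose two endpoints both lie in $S$ cover exactly two elements of $S$ each; the remaining elements of $S$ are each covered by a distinct off-diagonal pair to a slot outside $S$. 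In either accounting the total off-diagonal count is $\ge \lceil\#S/2\rceil\ge\tfrac12\#S$.) Therefore $|\Sigma_n\text{-product}|\le n^{-\frac12\#S}$ for every $\pi$, and summing over the $\mathcal{PP}(\delta)$ partitions gives \eqref{eq:AU1} with $C(\delta)\defeq\mathcal{PP}(\delta)$.

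For \eqref{eq:AU2}, set all $\delta_i=2$, so $\delta=2\ell$ and $\#S=0$. The Wick expansion of $\E\prod_{i=1}^\ell a_n(P_i)^2$ contains one distinguished partition $\pi_0$ that pairs each $P_i$'s two slots with each other, contributing $\prod_i\Sigma_n(P_i,P_i)=1$ by \eqref{eq:unitvar}; every other pair partition has at least one — in fact at least two — off-diagonal pairs (if one cross pair appears, a second must too, since the slots must re-close among the remaining $P_j$'s), so each such term is bounded by $n^{-2}$, and there are at most $\mathcal{PP}(2\ell)$ of them. Thus $|\E\prod_i a_n(P_i)^2-1|\le \mathcal{PP}(2\ell)/n^2=:C^{(\ell)}_n$, which is summable over $n$, completing the proof. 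The main obstacle is purely the bookkeeping in the previous paragraph: making the "$\ge\tfrac12\#S$ off-diagonal pairs" claim airtight by correctly handling Wick pairs with both endpoints in $S$ versus only one; everything else is a direct application of Wick's formula together with \eqref{eq:corrdecay} and \eqref{eq:unitvar}.
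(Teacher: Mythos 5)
Your proposal is correct and follows essentially the same route as the paper's proof: a Wick/Isserlis expansion over pair partitions, bounding each off-diagonal covariance by $1/n$ via \eqref{eq:corrdecay} and each diagonal one by $1$ via \eqref{eq:unitvar}, with the key count that the singleton indices force at least $\tfrac12\#\{i:\delta_i=1\}$ off-diagonal Wick pairs, and for \eqref{eq:AU2} isolating the distinguished partition and noting every other one carries at least two cross pairs. Your write-up is, if anything, slightly more explicit than the paper's on both counting steps.
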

\begin{corollary}\label{cor:Gaussian}
Let $(a_n)_n$ be an approximately uncorrelated Gaussian ensemble. Let $(b_n)_n$ be a sequence of $n$-bandwidths and $(X_n)_n$ be the periodic random band matrices which are based on $(a_n)_n$ with bandwidths $(b_n)_n$. Then the following statements hold:
 \begin{enumerate}[i)]
 \item If $b_n\to\infty$, then the semicircle law holds for $(X_n)_n$ in probability.
 \item If for some $p\in\N$, $\frac{1}{b_n^p}$ is summable over $n$, then the semicircle law holds almost surely for $(X_n)_n$. Observe that this statement applies in particular to full matrices ($b_n=n$), where $p=2$ may be chosen.
 \end{enumerate}
\end{corollary}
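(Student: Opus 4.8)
The plan is to deduce both statements directly from Theorem~\ref{thm:bandmatrix}, once we know that an approximately uncorrelated Gaussian ensemble is an approximately uncorrelated triangular scheme with summable error sequences. This is exactly the content of Lemma~\ref{lem:AUGaussian}, which I would invoke: it shows that $(a_n)_n$ satisfies \eqref{eq:AU1} with constants $C(\ell)\defeq\Pcal\Pcal(\ell)$ and \eqref{eq:AU2} with sequences $C^{(\ell)}_n\defeq\Pcal\Pcal(2\ell)/n^2$. The crucial feature is that for each fixed $\ell$, the sequence $(C^{(\ell)}_n)_n$ decays like $n^{-2}$ and is therefore summable over $n$.

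For part i), assume $b_n\to\infty$. Then by Lemma~\ref{lem:AUGaussian} the matrices $(X_n)_n$ are periodic random band matrices based on the approximately uncorrelated triangular scheme $(a_n)_n$ with bandwidths $b_n\to\infty$, so Theorem~\ref{thm:bandmatrix} i) applies verbatim and yields the semicircle law in probability.

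For part ii), suppose $b_n^{-p}$ is summable for some $p\in\N$. Since the terms of a convergent series tend to zero, $b_n^{-p}\to 0$, hence $b_n\to\infty$, and the standing hypothesis of Theorem~\ref{thm:bandmatrix} is met. The summability of $b_n^{-p}$ is assumed, while the summability of each $(C^{(\ell)}_n)_n$ was recorded above via Lemma~\ref{lem:AUGaussian}. Thus all hypotheses of Theorem~\ref{thm:bandmatrix} ii) are satisfied, giving the semicircle law almost surely. The concluding remark about full matrices follows by specializing to $b_n=n$ and $p=2$, since $\sum_n n^{-2}<\infty$.

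Since the analytic work is carried out in Theorem~\ref{thm:bandmatrix} and Lemma~\ref{lem:AUGaussian}, there is no genuine obstacle in this corollary; the only points deserving explicit mention are that the error sequences furnished by Lemma~\ref{lem:AUGaussian} are indeed summable (being $O(n^{-2})$) and that summability of $b_n^{-p}$ forces $b_n\to\infty$, so that Theorem~\ref{thm:bandmatrix} may be applied without additional assumptions.
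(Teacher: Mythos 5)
Your argument is correct and coincides with the paper's proof: the corollary is obtained by combining Lemma~\ref{lem:AUGaussian} (which supplies \eqref{eq:AU1}, \eqref{eq:AU2} with summable sequences $C^{(\ell)}_n = \Pcal\Pcal(2\ell)/n^2$) with Theorem~\ref{thm:bandmatrix}. Your extra remarks --- that summability of $b_n^{-p}$ forces $b_n\to\infty$, and the specialization $b_n=n$, $p=2$ --- are accurate and only make explicit what the paper leaves implicit.
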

\begin{proof}
This is immediate with Lemma~\ref{lem:AUGaussian} and Theorem~\ref{thm:bandmatrix}	
\end{proof}

\subsection{Proof of Theorem~\ref{thm:bandmatrix}}
Denote by $(\sigma_n)_n$ the ESDs of $(X_n)_n$ and by $\sigma$ the semicircle distribution. In \cite{Fleermann:Kirsch:Kriecherbauer:2021} the strategy of proof was to show that
\begin{align}
\forall\, k\in\N:\ \E\integrala{\sigma_n}{x^k} & \ \xrightarrow[n\to\infty]{}\ \integrala{\sigma}{x^k},\label{eq:expconv}\\
\forall\, k\in\N: \V\integrala{\sigma_n}{x^k} & \ \xrightarrow[n\to\infty]{}\ 0\label{eq:varconv}.
\end{align}
Then \eqref{eq:expconv} entails weak convergence in expectation and together with \eqref{eq:varconv} this ensures weak convergence in probability. Further, if in \eqref{eq:varconv} the decay is summably fast, then this entails weak convergence almost surely (see e.g.\ \cite{Fleermann:2019} for details).

However, in \cite{Fleermann:2015} it was shown (and also presented in \cite{Fleermann:2019}) that $\sigma_n\to\sigma$ weakly in expectation resp.\ in probability resp.\ almost surely if for all $k\in\N:$ $\integrala{\sigma_n}{x^k}\to\integrala{\sigma}{x^k}$ in expectation resp.\ in probability resp.\ almost surely.
Using this observation, we can refine the above proof strategy as follows: For each $k\in\N$ we identify a number $\ell\in\N$ and a decomposition 
\begin{equation}
\label{eq:decomposition}
\integrala{\sigma_n}{x^k} \ = \ D_n^{(1)} + D_n^{(2)} + \ldots + D_n^{(\ell)}
\end{equation}
into finitely many summands (where $\ell$ is independent of $n$), so that these summands are amenable for analysis individually. For example, if we can prove that  
\begin{align}
\forall\, 1\leq i \leq \ell-1 &:\ \E D_n^{(i)} \xrightarrow[n\to\infty]{} 0  \quad \text{and}\quad \E D_n^{(\ell)} \xrightarrow[n\to\infty]{} \Cat_{\frac{k}{2}}\one_{2\N}(k) \label{eq:rexpconv}\\
\forall\, 1\leq i \leq \ell:\, \exists\, z\in\N &:\ \E\abs{D_n^{(i)}-\E D_n^{(i)}}^z \xrightarrow[n\to\infty]{} 0 \label{eq:rvarconv},
\end{align}
where $\Cat_{\ell}$ is the $\ell$-th Catalan number, then \eqref{eq:rexpconv} secures convergence in expectation of $\integrala{\sigma_n}{x^k}$ to $\integrala{\sigma}{x^k}$ and together with \eqref{eq:rvarconv} this ensures convergence in probability of $\integrala{\sigma_n}{x^k}$ to $\integrala{\sigma}{x^k}$. Further, if all decays in \eqref{eq:rvarconv} are summably fast, then we obtain convergence almost surely of $\integrala{\sigma_n}{x^k}$ to $\integrala{\sigma}{x^k}$.
This strategy -- although seemingly more complicated -- has crucial advantages over the strategy pertaining to \eqref{eq:expconv} and \eqref{eq:varconv}. First, note that a direct analysis of $\V\integrala{\sigma_n}{x^k}= \V(D_n^{(1)}+\ldots +D_n^{(\ell)})$ requires the analysis of covariances $\Cov(D_n^{(i)},D_n^{(j)})$ for $i\neq j$. These mixed terms were responsible for serious problems in the analysis of \cite{Fleermann:Kirsch:Kriecherbauer:2021}, see Step 1/Case 1/Subcase 2 in the proof of their Theorem 2.3, for example, which led to further required conditions (e.g. (AAU3) in \cite{Fleermann:Kirsch:Kriecherbauer:2021}). The summand-wise approach merely requires the analysis of the terms $\E D_n^{(i)}$, $\V D_n^{(i)}$, $i=1\ldots,\ell$. The second advantage is that the individual summands $D_n^{(i)}$ -- as we will see -- can be chosen so that arbitrarily high central moments of $D_n^{(i)}$ are amenable for analysis (this was unwieldy when starting from $\integrala{\sigma_n}{x^k}$ due to the multitude of mixed terms emerging for high central moments). This high-moment analysis in turn allows us to lessen our requirements on the decay rate of the bandwidth $(b_n)_n$. Note that using the former approach as in \cite{Fleermann:Kirsch:Kriecherbauer:2021}, a higher moment analysis was not just unwieldy, but also not promising, see Remark 4.12 in \cite{Fleermann:Kirsch:Kriecherbauer:2021}. But let us begin with the proof: We begin as usual by writing
\begin{equation}
\label{eq:sum}	
\integrala{\sigma_n}{x^k}=\frac{1}{n}\tr X_n^k = \frac{1}{nb_n^{\frac{k}{2}}}\sum_{\ubar{t}\in\oneto{n}^k} a_n^b(\ubar{t})
\end{equation}
where $\ubar{t}=(t_1,\ldots,t_k)$ and $a_n^b(\ubar{t}) = a_n^b(t_1,t_2)a_n^b(t_2,t_3)\cdots a_n^b(t_k,t_1)$.
Note that in \eqref{eq:sum}, whenever a pair $(t_{\ell},t_{\ell+1})$ is not \emph{$b_n$-relevant}, the summand vanishes. Thus, we call a tuple $\ubar{t}\in\oneto{n}^k$ $b_n$-relevant, if each pair $(t_{\ell},t_{\ell+1})$, $\ell=1,\ldots,k$, is $b_n$-relevant, and we set $\oneto{n}_b^k\defeq \{\ubar{t}\in\oneto{n}^k\, | \, \ubar{t} \text{ is } b_n\text{-relevant}\}$. We arrive at
\begin{equation}
\label{eq:sum2}	
\integrala{\sigma_n}{x^k}= \frac{1}{nb_n^{\frac{k}{2}}}\sum_{\ubar{t}\in\oneto{n}_b^k} a_n^b(\ubar{t}).
\end{equation}
Now we identify a tuple $\ubar{t}$ with its Eulerian graph $G_{\ubar{t}}=(V_{\ubar{t}},E_{\ubar{t}},\phi_{\ubar{t}})$ with vertices $V_{\ubar{t}} \defeq \{t_1,\ldots,t_k\}$, abstract edges $E_{\ubar{t}}=\{e_1,\ldots,e_k\}$ and incidence function $\phi_{\ubar{t}}: E_{\ubar{t}}\to\{U\subseteq V_{\ubar{t}}\, |\, \# U \in\{1,2\}\}$, where $\phi_{\ubar{t}}(e_{\ell})=\{t_{\ell},t_{\ell+1}\}$ for $\ell=1,\ldots,k$, where $k+1\equiv 1$. Denote $\pi_{\ell}(\ubar{t})\defeq \#\{\phi_{\ubar{t}}(e)\, | \, \text{$e$ is an $\ell$-fold edge in $\ubar{t}$}\}$, then we call the vector $\pi(\ubar{t})\defeq(\pi_1(\ubar{t}),\ldots,\pi_k(\ubar{t}))$ the \emph{profile} of $\ubar{t}$. Denote by $\Pi(k)\defeq\{\pi(\ubar{t})\,|\, \ubar{t}\in\oneto{n}^k,n\in\N\}$ the set of all possible profiles of $k$-tuples. Then clearly,
\begin{equation}
\label{eq:clearly}	
\#\Pi(k)\leq(k+1)^k \quad \text{and}\quad \forall\, \pi\in\Pi(k): k =\sum_{\ell=1}^k \ell\cdot\pi_{\ell}.
\end{equation}
For all $\pi\in\Pi(k)$, we define the following sets of tuples:
\begin{align*}
\Tcal_n(\pi)&\ \defeq \ \{\ubar{t}\in\oneto{n}_b^k\,|\, \pi(\ubar{t})=\pi\}\\
\Tcal_n^d(\pi)&\ \defeq \ \{(\ubar{s},\ubar{t})\in(\oneto{n}_b^k)^2\, | \, \pi(\ubar{t})=\pi=\pi(\ubar{s}),\ \phi_{\ubar{t}}(E_{\ubar{t}})\cap\phi_{\ubar{s}}(E_{\ubar{s}})=\emptyset\}\\
\Tcal_n^c(\pi)&\ \defeq \ \{(\ubar{s},\ubar{t})\in(\oneto{n}_b^k)^2\, | \, \pi(\ubar{t})=\pi=\pi(\ubar{s}),\ \phi_{\ubar{t}}(E_{\ubar{t}})\cap\phi_{\ubar{s}}(E_{\ubar{s}})\neq\emptyset\}\\
\Tcal_n^{c,\ell}(\pi)&\ \defeq \ \{(\ubar{s},\ubar{t})\in(\oneto{n}_b^k)^2\, | \, \pi(\ubar{t})=\pi=\pi(\ubar{s}),\ \#(\phi_{\ubar{t}}(E_{\ubar{t}})\cap\phi_{\ubar{s}}(E_{\ubar{s}}))=\ell\}
\end{align*}
where the last set is defined for all $\ell\in\oneto{k}$. To explain these sets, $\Tcal_n(\pi)$ contains all $\ubar{t}\in\oneto{n}^k_b$ with profile $\pi$, $\Tcal^d_n(\pi)$ contains all edge-disjoint pairs $(\ubar{s},\ubar{t})$, where $\ubar{s}$, $\ubar{t}\in\oneto{n}^k_b$ with profiles $\pi$, $\Tcal_n^c(\pi)$ contains all such tuple pairs which share at least one edge and $\Tcal_n^{c,\ell}(\pi)$ all those that share exactly $\ell$ edges.

 In the following, we will say that a $\pi\in\Pi(k)$ admits an odd edge resp.\ only even edges, if $\pi_{\ell}\geq 1$ for some $\ell\in\oneto{k}$ odd resp.\ if $\pi_{\ell}=0$ for all $\ell\in\oneto{k}$ odd. The following lemma holds (see \cite{Fleermann:2019} Lemmas 4.31, 4.33, 4.34 and 4.37.)

\begin{lemma}
\label{lem:tuplesetcount}
Let $n,k\in\N$ and let $\pi\in\Pi(k)$ be arbitrary.
\begin{enumerate}[A)]
\item Let $\ubar{t} \in \oneto{n}^k$ be arbitrary, then
	\begin{enumerate}[i)]
	\item $\# V_{\ubar{t}} \leq 1 + \pi_1(\ubar{t}) + \ldots \pi_k(\ubar{t})$.
	\item If $\ubar{t}$ contains at least one odd edge, then $\# V_{\ubar{t}} \leq \pi_1(\ubar{t}) + \ldots \pi_k(\ubar{t})$.
	\end{enumerate}
\item Let $b_n$ be a bandwidth and $\ell\in\oneto{k}$, then $\#\{\ubar{t}\in\oneto{n}_b^k\,|\, \#V_{\ubar{t}}\leq\ell\} \leq k^knb_n^{\ell-1}$.

\item Let $\pi\in\Pi(k)$ be arbitrary, then
	\begin{enumerate}[i)]
	\item $\#\Tcal_n(\pi)\ \leq\ k^k n b_n^{\pi_1+\ldots + \pi_k}$.
	\item If $\pi$ admits an odd edge, then $\#\Tcal_n(\pi)\ \leq\ k^k n b_n^{\pi_1+\ldots + \pi_k -1}$.
	\item  $\#\Tcal_n^d(\pi)\ \leq\ (\#\Tcal_n(\pi))^2$.
	\item If $\pi$ admits only even edges, then  $\#\Tcal_n^c(\pi)\leq k^2 (2k)^{2k} n b_n^{k-1}$.
	\item If $\pi$ admits at least one odd edge, then $ \#\Tcal_n^c(\pi)\ \leq\ k^2 (2k)^{2k} n b_n^{2(\pi_1+\ldots + \pi_k) -2}$.
	\item If $\pi$ admits at least one odd edge, then we have for all $\ell=1,\ldots,k$:
	\[
	\#\Tcal_n^{c,\ell}(\pi)\ \leq\ k^2 (2k)^{2k} n b_n^{2(\pi_1+\ldots + \pi_k) -\ell-1}
	\]
\end{enumerate}
\end{enumerate}	
\end{lemma}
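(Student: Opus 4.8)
## Proof Strategy for Lemma~\ref{lem:tuplesetcount}

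The plan is to prove the six counting estimates essentially by controlling the number of free choices one has when building a $b_n$-relevant tuple (or pair of tuples) with a prescribed combinatorial skeleton. The unifying principle is: once the underlying multigraph structure is fixed, each vertex contributes at most a factor of $b_n$ to the count, except for the very first vertex, which contributes a factor of $n$, because all subsequent vertices are constrained to lie within bandwidth distance of an already-chosen vertex along the Eulerian walk.

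Part A) is pure graph theory about Eulerian structures. For i), I would observe that the graph $G_{\ubar{t}}$ is connected (it is traced by a closed walk) and that the number of distinct edges equals $\pi_1(\ubar{t})+\ldots+\pi_k(\ubar{t})$; a connected graph on $\#V_{\ubar{t}}$ vertices has at least $\#V_{\ubar{t}}-1$ edges, giving i) immediately. For ii), if some edge is traversed an odd number of times, the multigraph cannot consist solely of an Eulerian circuit's ``doubled tree'' pattern; more directly, a closed walk in which at least one edge-class has odd multiplicity forces the existence of a cycle in the underlying simple graph (a tree has all its edges as bridges, each of which must be crossed an even number of times by a closed walk), so the underlying graph has at least $\#V_{\ubar{t}}$ edges, yielding ii). Part B) then follows by a direct counting argument: choose the spanning-tree-like order of visiting the $\leq\ell$ vertices; the first vertex has $\leq n$ choices, and because the tuple must be $b_n$-relevant, every other newly-encountered vertex is within $(b_n-1)/2$ (mod $n$) of a previously chosen one, hence $\leq b_n$ choices each, and the ordering/assignment bookkeeping is absorbed into the crude prefactor $k^k$.

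For Part C): items i) and ii) are immediate consequences of B) combined with A)i) and A)ii) respectively. Item iii) is trivial since $\Tcal_n^d(\pi)\subseteq \Tcal_n(\pi)\times\Tcal_n(\pi)$. The substantive cases are iv), v), vi), concerning pairs $(\ubar s,\ubar t)$ sharing edges. Here the key idea is that if $\ubar s$ and $\ubar t$ together share $\ell\geq 1$ edges, then the union graph $G_{\ubar s}\cup G_{\ubar t}$ is connected (the shared edge glues the two walks' vertex sets), so it is a connected multigraph whose vertex set has size at most $(\pi_1+\ldots+\pi_k+1) + (\pi_1+\ldots+\pi_k+1) - (\ell+1)$ — one subtracts the $\ell+1$ vertices forced to be common by the $\ell$ shared edges (a subgraph with $\ell$ edges has at least... well, here one must be slightly careful, but $\ell$ shared edges force at least $\ell+1$ shared vertices only when those edges are connected; in general $\ell$ shared edges still force enough coincidences). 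The cleanest route: bound $\#V_{\ubar s\cup\ubar t}$ using A) applied appropriately and the edge-sharing, then invoke the B)-type counting (first vertex: $n$; each further vertex: $b_n$) on the combined walk of length $2k$, with prefactor $(2k)^{2k}$; the extra $k^2$ accounts for choosing which $\ell$ among $\leq k$ edge-classes of $\ubar t$ coincide with which of $\ubar s$. For iv) (only even edges) one uses that each of $\ubar s,\ubar t$ individually has $\#V\leq \pi_1+\ldots+\pi_k+1$ but sharing even one edge collapses the union to $\leq k-1+1 = $ something bounded by using that $\pi_1+\ldots+\pi_k \leq k/2$ when all edges are even — wait, more precisely $k = \sum \ell\pi_\ell \geq 2\sum\pi_\ell$, so $\sum \pi_\ell \leq k/2$, and the union has $\leq 2\cdot(k/2+1) - (\ell+1) \leq k$ vertices, whence $b_n^{k-1}$.

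The main obstacle I anticipate is item vi) (and the careful version of v)): making the bound $\#V_{\ubar s\cup\ubar t}\leq 2(\pi_1+\ldots+\pi_k) - \ell$ airtight when the $\ell$ shared edges are \emph{not} mutually connected. In that case the naive ``$\ell$ edges force $\ell+1$ common vertices'' fails. The fix is to use A)ii) — since $\pi$ admits an odd edge, \emph{each} of $G_{\ubar s}$, $G_{\ubar t}$ has at most $\pi_1+\ldots+\pi_k$ vertices, not $+1$ — together with the observation that the union walk is still a single connected structure (connected because $\ell\geq 1$), so one edge of overlap already forces two common vertices, and one then bounds $\#V_{\ubar s\cup\ubar t} = \#V_{\ubar s} + \#V_{\ubar t} - \#V_{\ubar s\cap \ubar t} \leq 2(\pi_1+\ldots+\pi_k) - \#V_{\ubar s\cap\ubar t}$ and argues $\#V_{\ubar s\cap\ubar t}\geq \ell+1$ by a small separate lemma: the shared edges, drawn among the shared vertices, form a graph; if it had $c$ connected components among the shared edges it would still have $\geq \ell + c \geq \ell+1$ shared vertices since each component with $e_i$ edges has $\geq e_i+1$ vertices and $\sum e_i = \ell$, $\sum (e_i+1) = \ell + c$. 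This component-counting is the genuinely delicate point and deserves to be spelled out carefully; everything else is bookkeeping absorbed into the generous constants $k^k$, $(2k)^{2k}$, $k^2$.
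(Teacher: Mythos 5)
Your overall architecture is the right one --- the paper itself does not prove this lemma (it defers to Lemmas 4.31--4.37 of \cite{Fleermann:2019}), but parts A), B) and C)~i)--iii) follow the standard route, and the ``first vertex costs $n$, each later vertex costs $b_n$'' counting is exactly the technique the paper spells out in its proof of the companion Lemma~\ref{lem:overlapsetcount}. Those parts are correct as you describe them, modulo spelling out that loops only help (a connected graph containing a loop among its $E$ distinct edge classes has at most $E$ vertices), which is also what rescues iv) and v) in the case that the shared edge is a loop and hence forces only one common vertex.

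The genuine gap is in C)~vi). The ``small separate lemma'' you propose --- that $\ell$ shared edge classes force at least $\ell+c\geq\ell+1$ shared vertices because each component with $e_i$ edges has at least $e_i+1$ vertices --- is false: that inequality holds only for acyclic components, and nothing prevents the shared edges from containing a cycle. Concretely, take $k=3$, $\pi=(3,0,0)$ and $\ubar{s}=\ubar{t}$ a triangle $(v_1,v_2,v_3)$: here $\ell=3$ but $\#(V_{\ubar{s}}\cap V_{\ubar{t}})=3$, and your claimed bound $\#V_{\ubar{s}\cup\ubar{t}}\leq 2(\pi_1+\ldots+\pi_k)-(\ell+1)=2$ would yield $\#\Tcal_n^{c,3}(\pi)=O(nb_n)$, whereas this set genuinely has order $nb_n^2$ --- which is exactly what the lemma asserts, so your intermediate claim overshoots to a false bound. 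Even the weaker claim $\#(V_{\ubar{s}}\cap V_{\ubar{t}})\geq\ell$ fails for denser overlaps (e.g.\ shared edges forming a $K_4$). The correct route, used in the paper's proof of Lemma~\ref{lem:overlapsetcount}, avoids estimating $V_{\ubar{s}}\cap V_{\ubar{t}}$ altogether: first build $\ubar{s}$ ($\leq k^knb_n^{\pi_1+\ldots+\pi_k-1}$ ways by C)~ii)), then walk cyclically along $\ubar{t}$ starting at an endpoint of a shared edge. Every vertex of $\ubar{t}$ other than the start is first reached along some edge, and distinct new vertices are first reached along distinct edge classes (if two first-reachings used the same class $\{t_j,t_{j+1}\}=\{t_{j'},t_{j'+1}\}$ with $j<j'$, the later vertex would already have been seen). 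A vertex first reached along one of the $\ell$ shared classes automatically lies in $V_{\ubar{s}}$ and so costs $O(k)$ rather than $b_n$ choices; hence at most $\pi_1+\ldots+\pi_k-\ell$ vertices of $\ubar{t}$ cost a factor $b_n$, giving $\#\Tcal_n^{c,\ell}(\pi)\leq K(k)\,nb_n^{\pi_1+\ldots+\pi_k-1}\cdot b_n^{\pi_1+\ldots+\pi_k-\ell}$ as required. With this replacement the rest of your proposal goes through.
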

We now sort the sum in \eqref{eq:sum2} according to profiles $\pi\in\Pi(k)$ and get
\begin{equation}
\label{eq:sum3}
\integrala{\sigma_n}{x^k}= \sum_{\pi\in\Pi(k)}\frac{1}{nb_n^{\frac{k}{2}}}\sum_{\ubar{t}\in\Tcal_n(\pi)} a_n^b(\ubar{t}).	
\end{equation}
We have now achieved a finite decomposition (since $\Pi(k)$ is a finite set) as in \eqref{eq:decomposition} and will proceed as outlined in \eqref{eq:rexpconv} and \eqref{eq:rvarconv}, that is, for each $\pi\in\Pi(k)$, we will analyze the expectation and variance of
\begin{equation}
\label{eq:toanalyze}
\frac{1}{nb_n^{\frac{k}{2}}}\sum_{\ubar{t}\in\Tcal_n(\pi)} a_n^b(\ubar{t}).
\end{equation}
\noindent\underline{Case 1: $\pi$ admits only even edges.}\newline
\noindent\underline{Subcase 1: $\pi = (0,k/2,0,\ldots,0)$.}\newline
In this subcase, each $\ubar{t}\in\Tcal_n(\pi)$ consists of $k/2$ double edges. By Lemma~\ref{lem:tuplesetcount}, a $\ubar{t}$ in this set has at most $k/2+1$ vertices. We construct the more detailed subsets
\[
\Tcal_n^{\leq k/2}(\pi) \defeq \{\ubar{t}\in \Tcal_n(\pi)\, | \, \#V_{\ubar{t}}\leq k/2 \} \quad\text{and}\quad \Tcal_n^{k/2+1}(\pi) \defeq \{\ubar{t}\in \Tcal_n(\pi)\, | \, \#V_{\ubar{t}}= k/2+1 \}.
\]
Then $\Tcal_n^{\leq k/2}(\pi)\leq k^k nb_n^{\frac{k}{2}-1}$ by Lemma~\ref{lem:tuplesetcount}. Thus,
\[
\frac{1}{n b_n^{\frac{k}{2}}} \sum_{\ubar{t}\in\Tcal_n^{\leq k/2}(\pi)} a_n^b(\ubar{t}) \xrightarrow[n\to\infty]{} 0
\]
in expectation and probability, and also almost surely if $b_n^{-p}$ is summable for some $p\in\N$ by Lemma~\ref{lem:breakaway}. Further, by \cite{Fleermann:2019} p.81 f.,
\[
\frac{1}{n b_n^{\frac{k}{2}}} \sum_{\ubar{t}\in\Tcal_n^{k/2+1}(\pi)}a_n^b(\ubar{t}) \xrightarrow[n\to\infty]{} \Cat_{\frac{k}{2}}.
\]
in expectation, and the variance of each of this sum is clearly upper bounded by
\begin{align}
&\frac{1}{n^2 b_n^k} \sum_{(\ubar{s},\ubar{t})\in\Tcal_n^d(\pi)}\abs{\E a_n^b(\ubar{s})a^b_n(\ubar{t})-\E a_n^b(\ubar{s})\E a^b_n(\ubar{t})}\label{eq:catvar1}\\
&+\ \frac{1}{n^2 b_n^k} \sum_{(\ubar{s},\ubar{t})\in\Tcal_n^c(\pi)}\abs{\E a_n^b(\ubar{s})a_n^b(\ubar{t})-\E a_n^b(\ubar{s})\E a^b_n(\ubar{t})}\label{eq:catvar2}.
\end{align}
Considering $\# \Tcal_n^d(\pi)\leq k^{2k}n^2b_n^k$ by Lemma~\ref{lem:tuplesetcount}, the sum in \eqref{eq:catvar1} converges to zero by \eqref{eq:AU1} and \eqref{eq:AU2}, and the decay is summably fast if $C^{(k/2)}_n$ decays to zero summably fast.
Next, considering $\# \Tcal_n^c(\pi)\leq k^2(2k)^{2k}n b_n^{k-1}$ by Lemma~\ref{lem:tuplesetcount}, the sum \eqref{eq:catvar2} converges to zero, and this convergence is summably fast if $(n b_n)^{-1}$ is summable, which is the case if $b_n^{-p}$ is summable for some $p\in\N$ (Young's inequality).

\noindent\underline{Subcase 2: $\pi_{\ell}\geq 1$ for some $\ell\geq 4$.}\newline
 Then since $\pi_1+\ldots+\pi_k \leq 1 + (k-4)/2 = k/2-1$, we obtain the bound $\#\Tcal_n(\pi)\leq k^k n b_n^{k/2-1}$ by Lemma~\ref{lem:tuplesetcount}. Thus,
\[
\frac{1}{nb_n^{\frac{k}{2}}}\sum_{\ubar{t}\in \Tcal_n(\pi)} a_n^b(\ubar{t}) \xrightarrow[n\to\infty]{} 0 
\]
in expectation and in probability, and also almost surely if $b_n^{-p}$ is summable for some $p\in\N$ (Lemma~\ref{lem:breakaway}). 

\noindent\underline{Case 2: $\pi_{\ell}\geq 1$ for some $\ell\in\N$ odd.}\newline
Then by Lemma~\ref{lem:tuplesetcount}, $\#\Tcal_n(\pi)\leq k^k n b_n^{\pi_1+\ldots +\pi_k -1}$. Further, by condition \eqref{eq:AU1},
\[
\forall\,\ubar{t}\in\Tcal_n(\pi):\,\abs{\E a_n^b(\ubar{t})}\leq\frac{C(k)}{n^{\frac{1}{2}\pi_1}},
\]
so
\begin{equation}
\label{eq:standard}	
\frac{1}{nb_n^{\frac{k}{2}}}\sum_{\ubar{t}\in\Tcal_n(\pi)}\abs{\E a_n^b(\ubar{t})} \leq \frac{k^kC(k)}{nb_n^{\frac{k}{2}}} \cdot \frac{nb_n^{\pi_1 + \ldots + \pi_k -1}}{b_n^{\frac{1}{2}\pi_1}} \leq \frac{k^kC(k)}{b_n},
\end{equation}
using $\pi_1/2 + \pi_2 + \ldots + \pi_k\leq k/2$ by \eqref{eq:clearly}. Since $b_n\to\infty$, convergence in expectation to zero follows.
Next, instead of analyzing the variance of the sum in question, we analyze an arbitrary high even central moment. To this end, let $z\in 2\N$ be arbitrary, then
\begin{align}
&\E\left(\frac{1}{nb_n^{\frac{k}{2}}}\sum_{\ubar{t}\in\Tcal_n(\pi)}(a_n^b(\ubar{t}) - \E a_n^b(\ubar{t}))  \right)^z \label{eq:zthmoment}\\
&=\ \bigabs{\frac{1}{n^zb_n^{\frac{zk}{2}}} \sum_{\ubar{t}^{(1)},\ldots, \ubar{t}^{(z)}\in\Tcal_n(\pi)} \E \prod_{s=1}^z \left[a_n^b(\ubar{t}^{(s)})- \E a_n^b(\ubar{t}^{(s)}) \right] }\notag\\
&\leq \ \sum_{S\subseteq \oneto{z}} \frac{1}{n^zb_n^{\frac{zk}{2}}} \sum_{\ubar{t}^{(1)},\ldots, \ubar{t}^{(z)}\in\Tcal_n(\pi)} \bigabs{\E \prod_{s\in S} a_n^b(\ubar{t}^{(s)})}\cdot \prod_{s\notin S} \abs{\E a_n^b(\ubar{t}^{(j)})}\notag\\
&= \ \sum_{S\subseteq \oneto{z}}\underbrace{\frac{1}{n^{\# S}b_n^{\frac{k\# S}{2}}} \sum_{\substack{\ubar{t}^{(i)}\in\Tcal_n(\pi)\\ i\in S}} \bigabs{\E \prod_{i\in S} a_n^b(\ubar{t}^{(i)})}}_{=:A_n(S)} \cdot \underbrace{\frac{1}{n^{\# S^c}b_n^{\frac{k\# S^c}{2}}}\sum_{\substack{\ubar{t}^{(j)}\in\Tcal_n(\pi)\\ j\in S^c}} \prod_{j\in S^c} \abs{\E a_n^b(\ubar{t}^{(j)})}}_{=:B_n(S)},\notag
\end{align}
where for any $S\subseteq\oneto{z}$, $S^c\defeq\oneto{z}\backslash S$.
We will analyze the terms $A_n(S)$ and $B_n(S)$ separately. Notationally, we write $s\defeq\# S$ and $s^c\defeq \# S^c$ and note that $s+s^c=z$. For $B_n(S)$ we find an upper bound using \eqref{eq:standard}:
\[
B_n(S)\ \leq \  \frac{(k^kC(k))^{s^c}}{b_n^{s^c}}.
\]
For $A_n(S)$ we need to account for common edges among the $\ubar{t}^{(i)}$, $i\in S$. These have the effect that on the one hand, overlaps lead to fewer possible tuples, but on the other hand, single edges might overlap, negating a possible decay which existed due to \eqref{eq:AU1}. If $S=\emptyset$, then $A_n(S)=1$ as an empty product. If $S=\{i\}$ for some $i\in\oneto{z}$, then by \eqref{eq:standard},
\begin{equation}
\label{eq:Ssmall}
A_n(S) \leq \frac{k^k C(k)}{b_n}, \quad\text{so}\quad A_n(S)B_n(S)\leq \frac{(k^kC(k))^z}{b_n^z}
\end{equation}
in both cases $\# S = 0$ and $\# S =1$. We now assume that $s=\# S\geq 2$. Write $S=\{i_1,\ldots, i_s\}$, where $i_j\in\oneto{z}$ and $i_1<\ldots<i_s$. Then for all $\ell_2,\ldots,\ell_s\in\{0,1,\ldots,\sum_i\pi_i\}$ we denote by $\Tcal^{(\ubar{\ell})}_n(\pi)$ the set of tuples $(\ubar{t}^{(i_1)},\ldots,\ubar{t}^{(i_s)})$, where $\ubar{t}^{(i_j)}\in\Tcal_n(\pi)$ and 
\[
\forall\, j\in\{2,\ldots,s\}:\ \#\phi_{\ubar{t}^{(i_j)}}(E_{\ubar{t}^{(i_j)}}) \cap \bigcup_{1\leq r < j}\phi_{\ubar{t}^{(i_r)}}(E_{\ubar{t}^{(i_r)}}) = \ell_j,
\]
in words, each $\ubar{t}^{(i_j)}$ has exactly $\ell_j$ of its $\sum_i\pi_i$ different edges in common with previous tuples $\ubar{t}^{(i_r)}$, $r<j$. Then
\[
A_n(S)= \sum_{\ubar{\ell}\in \{0,\ldots,\sum_i\pi_i\}^{s-1}}\underbrace{\frac{1}{n^sb_n^{\frac{ks}{2}}} \sum_{(\ubar{t}^{(i_j)})_{j\in\oneto{s}}\in\Tcal_n^{(\ubar{\ell})}(\pi)} \bigabs{\E \prod_{j=1}^s a_n^b(\ubar{t}^{(i_j)})}}_{=:A_n^{(\ubar{\ell})}(S)}.
\]
If $\ell=(0,\ldots,0)$, this entails that all tuples are edge-disjoint, so we use the trivial upper bound
\[
\# \Tcal_n^{(\ubar{\ell})}(\pi)\ \leq\ (\#\Tcal_n(\pi))^s\ \leq\ (k^k n b_n^{\pi_1+\ldots + \pi_k-1})^s,
\]
and since no single edges may vanish due to overlaps, we obtain by \eqref{eq:AU1}
\begin{equation}
\label{eq:expprod}	
\bigabs{\E \prod_{j=1}^s a_n^b(\ubar{t}^{(i_j)})} \leq \frac{C(k s)}{n^{\frac{1}{2}\pi_1 s}},
\end{equation}
so
\[
A_n^{(\ubar{\ell})}(S) \leq  \frac{1}{n^s b_n^{\frac{k}{2}s}} (k^k n b_n^{\pi_1+\ldots + \pi_k-1})^s \cdot \frac{C(ks)}{n^{\frac{1}{2}\pi_1s}}
 \leq \frac{k^{ks}C(ks)}{b_n^s},
\]
and subsequently
\[
A_n^{(\ubar{\ell})}(S)\cdot B_n(S) \leq \frac{k^{ks}C(ks)}{b_n^s} \cdot \frac{(k^kC(k))^{s^c}}{b_n^{s^c}} = \frac{k^{zk}C(k)^{s^c}C(ks)}{b_n^z}.
\]
If $\ubar{\ell}\neq (0,\ldots,0)$, then some of the tuples in $(\ubar{t}^{(i_j)})_{j\in\oneto{s}}$ have common edges. As mentioned, this has two effects: On the one hand, the corresponding product on the l.h.s.\ of \eqref{eq:expprod} cannot be guaranteed to decay at the speed given on the r.h.s.\ of \eqref{eq:expprod}, since single edges in different tuples might overlap, negating the decay effect guaranteed by (AU1). To be more precise, for each overlap at most two single edges may be eradicated, leading to at least $s\cdot\pi_1 - 2\ell_2-2\ell_3\ldots-2\ell_s$ remaining single random variables in the product on the l.h.s.\ of \eqref{eq:expprod}. So if $\ubar{\ell}\neq (0,\ldots,0)$, then \eqref{eq:expprod} becomes
\begin{equation}
\label{eq:expprodoverlap}	
\bigabs{\E \prod_{j=1}^s a_n^b(\ubar{t}^{(i_j)})} \leq \frac{C(ks)}{n^{\frac{1}{2}\max(s\pi_1-2\ell_2-\ldots-2\ell_s,0)}}
\end{equation}

The second effect is that overlaps of edges entail fewer possible vertices, which decreases the count $\#\Tcal_n^{(\ubar{\ell})}(\pi)$:

\begin{lemma}
\label{lem:overlapsetcount}
Let $k,n\in\N$ and $\pi\in\Pi(k)$ be arbitrary so that $\pi_i\neq 0$ for some $i\in\oneto{k}$ odd. Further, let $\ubar{\ell}\in\{0,\ldots,\sum_i\pi_i\}^{s-1}$ be arbitrary for some $s\geq 2$, where $\ubar{\ell}\neq(0,\ldots,0)$. Then
\[
\#\Tcal_n^{(\ell)}(\pi) \leq k^k n b_n^{\sum_i\pi_i -1} \cdot \prod_{\substack{i\in\{2,\ldots,s\}\\ \ell_i=0}} k^k n b_n^{\sum_i\pi_i -1} \cdot \prod_{\substack{i\in\{2,\ldots,s\}\\ \ell_i\geq 1}} (2k)^k k(ks)^{\ell_i}b_n^{\pi_1+\ldots+\pi_k-\ell_i}.
\]
\end{lemma}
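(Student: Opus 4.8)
\textbf{The plan} is to count greedily: reveal the tuples $\ubar t^{(i_1)},\ldots,\ubar t^{(i_s)}$ one at a time and bound the number $M_r$ of admissible choices for $\ubar t^{(i_r)}$ given the earlier ones, so that $\#\Tcal_n^{(\ubar\ell)}(\pi)\le\#\Tcal_n(\pi)\cdot\prod_{r=2}^s M_r$, and the product on the right-hand side will fall out. Write $D:=\pi_1+\ldots+\pi_k$; recall from \eqref{eq:clearly} that $D\le k$, and that every element of $\Tcal_n(\pi)$ has exactly $D$ distinct edges. For fixed $r\ge 2$ and fixed earlier tuples put $\Fcal:=\bigcup_{q<r}\phi_{\ubar t^{(i_q)}}(E_{\ubar t^{(i_q)}})$; then $\#\Fcal\le(r-1)D\le ks$ and every edge of $\Fcal$ is $b_n$-relevant. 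If $\ell_r=0$ the admissible choices for $\ubar t^{(i_r)}$ are in particular elements of $\Tcal_n(\pi)$, so Lemma~\ref{lem:tuplesetcount}\,C)ii) (applicable since $\pi$ admits an odd edge) bounds $M_r$ by $k^k n\,b_n^{D-1}$, which is exactly the factor appearing for such $i$; the leading factor of the product is the same bound applied to $\ubar t^{(i_1)}$. So the whole content is the case $\ell_r\ge 1$, where I want $M_r\le(2k)^k\,k\,(ks)^{\ell_r}b_n^{D-\ell_r}$, i.e.\ a bound on the number of $\ubar t\in\oneto{n}_b^k$ with $\pi(\ubar t)=\pi$ that share exactly $\ell_r$ distinct edges with $\Fcal$.

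\textbf{For that case} I would first use that cyclic rotation of a $k$-tuple preserves $\oneto{n}_b^k$, the profile, and the edge set: at the cost of a factor $k$ (the rotation amount) one may assume that the edge sitting at position $1$ of $\ubar t$ already lies in $\Fcal$. Next fix the combinatorial skeleton of $\ubar t$ — the partition of $\{1,\ldots,k\}$ recording which coordinates coincide — which, encoded as a restricted-growth word, ranges over at most $k^k$ possibilities and pins down the first-appearance positions of all vertices and edges; and fix which $\ell_r-1$ of the remaining $D-1$ distinct edges (besides the position-$1$ edge) are the ones meeting $\Fcal$, costing at most $\binom{D-1}{\ell_r-1}\le 2^{k-1}$. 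With all this fixed, traverse $\ubar t$ and assign actual vertices: the position-$1$ edge, chosen from $\Fcal$ with an orientation, costs $\le 2\#\Fcal\le 2ks$ and fixes its endpoint(s); each later first appearance of a distinct edge that introduces a new vertex costs $\le\#\Fcal\le ks$ if that edge was declared to lie in $\Fcal$ (its other endpoint is already assigned), and $\le b_n$ otherwise (the new vertex being a $b_n$-relevant neighbour of an assigned vertex). There are at most $\ell_r-1$ first appearances of the first kind (the position-$1$ edge never introduces a \emph{later} vertex) and at most $D-\ell_r$ of the second (one per non-$\Fcal$ distinct edge). Multiplying the factors gives $M_r\le k\cdot k^k\cdot 2^{k-1}\cdot 2ks\cdot(ks)^{\ell_r-1}\cdot b_n^{D-\ell_r}=(2k)^k\,k\,(ks)^{\ell_r}b_n^{D-\ell_r}$, and multiplying the per-tuple bounds over $r=1,\ldots,s$ produces precisely the claimed product.

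\textbf{The main obstacle} is bookkeeping of constants, not geometry. A naive traversal making an independent choice at every one of the $k$ steps — new vertex or which old one, new edge or which old one, inside $\Fcal$ or not — incurs an overhead of order $(ck^2)^k$, far beyond the permitted $(2k)^k$. The trick is therefore to package the combinatorics as a skeleton (cost $k^k$ via restricted-growth words) together with a single choice of which $\ell_r$ distinct edges meet $\Fcal$ (cost $2^{k-1}$, not a per-step binary decision), leaving only genuinely geometric choices, each weighted by $\#\Fcal\le ks$ or by $b_n$. A secondary point one must check carefully is that the rotated position-$1$ edge is never also the edge through which a later vertex is first reached, so that exactly $D-\ell_r$ traversal steps — and no more — carry a $b_n$-factor; this is what makes the bandwidth exponent come out as $\sum_i\pi_i-\ell_i$ rather than something larger.
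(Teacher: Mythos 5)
Your proof is correct and follows essentially the same strategy as the paper's: construct the tuples sequentially, bound the edge-disjoint ones by Lemma~\ref{lem:tuplesetcount}, and for each overlapping tuple rotate it to start at a shared edge (cost $k$), fix a coincidence skeleton (cost $k^k$) together with a binary marking (cost $2^k$), then charge at most $ks$ per overlap-determined choice and $b_n$ per genuinely new vertex. The only (cosmetic) difference is that you mark which \emph{edges} lie in the previously revealed edge set $\Fcal$, whereas the paper marks which \emph{vertices} are old versus new; the resulting constants are identical.
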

\begin{proof}
The strategy of the proof is to derive an upper bound on the number of possibilities to construct an element $(\ubar{t}^{(1)},\ldots,\ubar{t}^{(s)})\in\Tcal^{(\ubar{\ell})}_n(\pi)$.

We proceed step by step: For $\ubar{t}^{(1)}$ we have at most $k^knb_n^{\pi_1+\ldots+\pi_k-1}$ possibilities by Lemma~\ref{lem:tuplesetcount}. Of course, the bound also applies for all $\ubar{t}^{(i)}$ with $i\geq 2$, but we will only use it again for those $\ubar{t}^{(i)}$, $i\geq 2$, for which $\ell_i=0$. 

 Now assume that $i\geq 2$ with $\ell_i\geq 1$. 
 Note that so far, at most $(i-1)\cdot k\leq sk$ vertices have been picked for previous tuples. Each tuple has at most $\pi_1+\ldots+\pi_k$ vertices by Lemma~\ref{lem:tuplesetcount}. We now obtain an upper bound on the \emph{new} vertices that $\ubar{t}^{(i)}$ may contain. To this end, 
 we start a cyclic tour along the tuple $\ubar{t}^{(i)}$, starting at an end point $t^{(i)}_{j+1}$ of an edge $e_j$ which is a common edge with a previously determined tuple. Then $t^{(i)}_{j+1}$ is not a newly observed vertex, and as we proceed cycicly along the tuple, we can observe at most $\pi_1+\ldots+\pi_k-\ell_i$ new vertices. 
 
 We now bound the possibilities to construct a $\ubar{t}\in\oneto{n}^k_b$ with at most $\pi_1+\ldots+\pi_k$ vertices, from which at most $\pi_1+\ldots+\pi_k-\ell_i$ vertices are new and all other vertices are old (i.e.\ appeared in some previous tuple), but at least one vertex must be old (since we have at least one overlap). Since there is at least one old vertex, we start with such a vertex for $t_1$ and in the end allow a cyclic permutation (e.g. $(1,2,3,4)\to (3,4,1,2)$) of the tuple to count all possibilities. For the construction of $\ubar{t}$ we first fix a map $f:\{1,\ldots,k\}\to\{1,\ldots,\pi_1+\ldots+\pi_k\}$ indicating which places in $\ubar{t}^{(i)}$ should receive equal or different vertices, i.e.\ $t_i=t_j:\Leftrightarrow f(i)=f(j)$. We assume that the coloring $f$ has standard form, that is, $f(1)=1$ and if $f(\ell)\notin\{f(1),\ldots,f(\ell-1)\}$, then $f(\ell)=\max\{f(1),\ldots,f(\ell-1)\} +1$. This choice of $f$ admits at most $k^k$ possibilities. Note that $\max f(\oneto{k})$ is the number of different vertices in $\ubar{t}$. To determine which of these should be new and which should be old, we fix another map $g:\{1,\ldots,\max f(\oneto{k})\}\to\{0,1\}$ with $g(1)=0$ and $\#\{i\in\{1,\ldots,\max f(\oneto{k})\}\,|\, g(1)=1\}\leq \pi_1+\ldots+\pi_k -\ell_i$, which admits at most $2^k$ possibilities. We then proceed as follows: For $t_1$ we choose an old vertex arbitrarily, yielding at most $(i-1)k\leq sk$ choices. Then if $t_1,\ldots,t_{\ell}$ have already been constructed, where $\ell\in\{1,\ldots,k-1\}$, we construct $t_{\ell+1}$ as follows: If $f(\ell+1)=f(\ell')$ for some $\ell'\in\{1,\ldots,\ell\}$, then $t_{\ell+1}$ must equal $t_{\ell'}$, leaving no choice for $t_{\ell+1}$. If $f(\ell+1)\notin\{f(1),\ldots,f(\ell)\}$, choose $t_{\ell+1}$ different from all previous vertices in $\ubar{t}$. If $g(f(\ell+1))=0$, pick an old vertex from some previous tuple, yielding at most $sk$ possibilities. If $g(f(\ell+1))=1$, pick a new vertex, yielding at most $b_n$ possibilities. This procedure to construct $\ubar{t}$ with help of $f$ and $g$ admits at most $(ks)^{\ell_i}
b_n^{\pi_1+\ldots +\pi_k - \ell_i}$
 possibilities.
 Cyclic permutation of this tuple admits at most $k$ choices, picking $f$ and $g$ at most $(2k)^k$ choices, so all together we had at most 
 $(2k)^k k(ks)^{\ell_i}b_n^{\pi_1+\ldots+\pi_k-\ell_i}$ choices for $\ubar{t}$. Since $\ubar{t}^{(i)}$ is of this form, this concludes the proof.
\end{proof}
Combining Lemma~\ref{lem:overlapsetcount} with \eqref{eq:expprodoverlap} and the prefactor in the definition of $A_n^{(\ubar{\ell})}(S)$ yields, setting $\ell(0)\defeq \#\{i\in\{2,\ldots,s\}\,|\,\ell_i=0\}$ and $\ell(\geq 1)\defeq \#\{i\in\{2,\ldots,s\}\,|\,\ell_i\geq 1\}$,
\begin{align*}
A_n^{(\ubar{\ell})}(S)&\leq \frac{1}{n^sb_n^{\frac{k}{2}s}} k^k n b_n^{\sum_i\pi_i -1} \cdot \prod_{\substack{i\in\{2,\ldots,s\}\\ \ell_i=0}} k^k n b_n^{\sum_i\pi_i -1} \\
&\quad\cdot \prod_{\substack{i\in\{2,\ldots,s\}\\ \ell_i\geq 1}} (2k)^{k}k(ks)^{\ell_i} b_n^{\sum_i\pi_i -\ell_i} \cdot \frac{C(ks)}{n^{\frac{1}{2}\max(s\pi_1-2\ell_2-\ldots-2\ell_s,0)}}\\
&\leq\frac{K(k,s)}{n^{s-\ell(0)-1}b_n^{\frac{k}{2}s}}b_n^{(\sum_i\pi_i-1)(\ell(0)+1)}\cdot b_n^{(\sum_i\pi_i)(\ell(\geq 1))}\frac{1}{b_n^{\max(s\pi_1/2,\, \ell_2+\ldots+\ell_s)}},
\end{align*}
where 
\[
K(k,s)\defeq k^{sk+1}(2k)^{ks}s^{ks}C(ks) \geq k^{k+k\ell(0)+1+k\ell(\geq 1)}(2k)^{k\ell(\geq 1)}s^{k\ell(\geq 1)}C(ks),
\]
for which we used $\ell(0)+\ell(\geq 1)+1 = s$. This inequality together with $\pi_1+\ldots+\pi_k\leq \pi_1 + (k-\pi_1)/2 = \pi_1/2 + k/2$ yields
\[
A_n^{(\ubar{\ell})}(S)\ \leq\ \frac{K(k,s)}{n^{s-\ell(0)-1}} \cdot\frac{b_n^{\frac{s\pi_1}{2}-\ell(0)-1}}{b_n^{\max(s\pi_1/2,\, \ell_2+\ldots+\ell_s)}}\leq \frac{K(k,s)}{b_n^s},
\]
and so
\begin{equation}
\label{eq:Slarge}
A_n^{(\ubar{\ell})}(S)B_n(S)\ \leq\ \frac{K(k,s)}{b_n^s}\cdot\frac{(k^kC(k))^{s^c}}{b_n^{s^c}} = \frac{K(k,s)(k^kC(k))^{s^c}}{b_n^z}.
\end{equation}
We have recognized the $z$-th central moment in \eqref{eq:zthmoment} as a finite sum of terms $A_n(S)B_n(s)$ and $A_n^{(\ubar{\ell})}(S)B_n(S)$, where the number of these terms is independent of $n$, and so that each summand decays at a speed of $b_n^{-z}$. This completes the proof of parts $i)$ and $ii)$ of Theorem~\ref{thm:bandmatrix} when choosing $z\geq p$ with $z\in 2\N$.

\subsection{Non-periodic band matrices}

In this section, we will see that Theorem~\ref{thm:bandmatrix} remains true for \emph{non-periodic} random matrices with approximately uncorrelated entries. To start, the concept of a bandwidth should be replaced by the concept called \emph{halfwidth}, which we adopted from \cite{Kirsch:Kriecherbauer:2018}. 
Roughly, the halfwidth $h=(h_n)_n$ is half of the bandwidth $b=(b_n)_n$, hence the name. 
Two $6\times 6$ non-periodic band matrices with halfwidths $2$ resp.\ $4$ have the structure
\[
\begin{pmatrix}
x_{1,1}		&		x_{1,2}		&			0				& 		0 			&			0				&	  0  \\
x_{2,1}		&		x_{2,2}		&		x_{2,3}		& 		0				&			0				&	 		0			 \\
	0				&		x_{3,2}		&		x_{3,3}		& 	x_{3,4} 	&			0				&	 		0		   \\
	0				&				0			&		x_{4,3}		& 	x_{4,4} 	&		x_{4,5}		&	 0  \\
	0				&				0			&			0				& 	x_{5,4} 	&		x_{5,5}		&	  x_{5,6}  \\
0	&				0			&			0				& 		0			 	&		x_{6,5}		&	  x_{6,6} 
\end{pmatrix},
\quad\text{resp.}\quad
\begin{pmatrix}
x_{1,1}		&		x_{1,2}		&		x_{1,3}		& 	x_{1,4} 	&		0   		&		0 		\\
x_{2,1}		&		x_{2,2}		&		x_{2,3}		& 	x_{2,4} 	&		x_{2,5}		&	 	0    	\\
x_{3,1}		&		x_{3,2}		&		x_{3,3}		& 	x_{3,4} 	&		x_{3,5}		&	  x_{3,6}  	\\
x_{4,1}		&		x_{4,2}		&		x_{4,3}		& 	x_{4,4} 	&		x_{4,5}		&	  x_{4,6}  	\\
	0		&		x_{5,2}		&		x_{5,3}		& 	x_{5,4} 	&		x_{5,5}		&	  x_{5,6}  	\\
	0		&		0			&		x_{6,3}		& 	x_{6,4} 	&		x_{6,5}		&	  x_{6,6} 	\\
\end{pmatrix}.
\]
The halfwidth should be interpreted as the number of allowable non-trivial entries in the first row of the matrix. As before, the bandwidth describes the number of such entries in the "middle row".

\begin{definition}
Let $n\in\N$ be arbitrary, then an $h_n\in\N$ is called \emph{($n$-)halfwidth}\label{sym:halfwidth}, if $h_n\in\{1\ldots n\}$. Given a sequence of halfwidths $h=(h_n)_n$, we set
\[
\forall\, n\in\N:~b_n\defeq \min(2 h_n-1,n)
\]
and call $b_n$ the \emph{bandwidth associated with the halfwidth} $h_n$.
\end{definition}

It is clear that for any $n\in\N$, the bandwidth $b_n$ that is associated with a halfwidth $h_n$ is either $n$ itself or an odd number in the set $\{1,\ldots,n\}$, thus coincides with the concept of a bandwidth in previous sections.

The difference between periodic and non-periodic matrices is that in the latter case, the triangular areas in the upper right and lower left corner of the matrices are missing, leading to the possibility that the \emph{inner band} is so wide that it reaches the top right and lower left corners of the matrix.

\begin{definition}
\label{def:bandmatrices}
Let $(\Omega,\Acal,\Prob)$ be a probability space, $(a_n)_{n\in\N}$ a triangular scheme, $(h_n)_n$ be a sequence of $n$-halfwidths with associated bandwidths $(b_n)_n$.
\begin{enumerate}
\item We define the non-periodic random matrices which are based on the triangular scheme $(a_n)_{n\in\N}$ with halfwidths $(h_n)_n$ as
\[
\forall\, n\in\N:\,\forall\,(i,j)\in\oneto{n}^2:~ X_n^{NP}(i,j)\defeq\label{sym:nonperrm}
\begin{cases}
\frac{1}{\sqrt{b_n}}a_n(i,j) & \mbox{if } \abs{i-j}\leq h_n-1,\\
0 & \mbox{otherwise.} 
\end{cases}
\]
\item We define the periodic random matrices which are based on the triangular scheme $(a_n)_{n\in\N}$ with associated bandwidth $b$ as
\[
\forall\, n\in\N:\,\forall\,(i,j)\in\oneto{n}^2:~ X_n^{P}(i,j)\defeq\label{sym:perrm}
\begin{cases}
\frac{1}{\sqrt{b_n}}a_n(i,j) & \mbox{if } \abs{i-j}\leq h_n-1, \\
\frac{1}{\sqrt{b_n}}a_n(i,j) & \mbox{if } \abs{i-j} \geq \max(n-h_n+1,h_n),\\
0 & \mbox{otherwise.} 
\end{cases}
\]
\end{enumerate}
\end{definition}

Note that the definition of periodic random matrices has not changed in comparison to previous sections as it is not hard to check that if $h_n$ is an $n$-halfwidth with associated bandwidth $b_n$, then an index pair $(p,q)\in\oneto{n}^2$ is $b_n$-relevant iff $\abs{p-q}\leq h_n-1$ or $\abs{p-q} \geq \max(n-h_n+1,h_n)$.

In \cite{Bogachev:2006} it was shown that for the i.i.d.\ case, the semicircle law holds in probability for $(X^{NP}_n)_n$ if
\begin{equation}
\label{eq:nonpercondition}
\lim_{n\to\infty} h_n=\infty\quad\text{and}\quad	
\lim_{n\to\infty}\frac{h_n}{n}\in\{0,1\},
\end{equation}
whereas the semicircle law does not hold if $\lim_n h_n/n=p$ for some $p\in(0,1)$. The analysis of this subsection derives the case \eqref{eq:nonpercondition} for the approximately uncorrelated setup. The case that $\lim_n h_n/n=p$ for some $p\in(0,1)$ is a corollary of our treatment in the second part of this paper.

Comparing non-periodic and periodic band matrices given some halfwidth $h_n$ and associated bandwidth $b_n$, we realize that both matrices contain a non-trivial area with indices $\abs{i-j}\leq h_n-1$, which is the band in the middle of the matrix, and additionally, periodic matrices contain non-trivial triangular areas with indices $\abs{i-j} \geq \max(n-h_n+1,h_n)$. Therefore,
the matrix $X_n^{P}-X_n^{NP}$ has rank $2\cdot\min(h_n,\,n-h_n +1)$.
We now use a well-known rank inequality (e.g. \cite{Fleermann:Kirsch:Kriecherbauer:2021}):
\begin{lemma}
\label{lem:rankinequality}
Let $Y$ and $\Ecal$ be real symmetric $n\times n$ matrices, where $\Ecal$ has rank $k$. Then it holds for all $z\in\C_+\defeq\{z\in\C\,|\,\Im(z)>0\}$:
\[
\bigabs{\frac{1}{n}\tr\left[(Y-z)^{-1}\right]-\frac{1}{n}\tr\left[(Y+\Ecal-z)^{-1}\right]} \leq \frac{2k}{n\Im(z)}.
\]
\end{lemma}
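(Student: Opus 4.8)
The plan is to reduce the estimate to a telescoping sum of rank-one perturbations, each of which is handled by the Sherman--Morrison formula. Since $\Ecal$ is real symmetric of rank $k$, I would first use the spectral theorem to write $\Ecal=\sum_{j=1}^{k}\varepsilon_j v_jv_j^{T}$ with orthonormal $v_1,\dots,v_k\in\R^{n}$ and $\varepsilon_1,\dots,\varepsilon_k\in\R\setminus\{0\}$. Putting $Y_0\defeq Y$ and $Y_j\defeq Y_{j-1}+\varepsilon_j v_jv_j^{T}$, so that $Y_k=Y+\Ecal$, the triangle inequality reduces the claim to the per-step bound
\[
\bigl|\tr[(Y_{j-1}-z)^{-1}]-\tr[(Y_j-z)^{-1}]\bigr|\ \le\ \frac{1}{\Im(z)}\qquad(1\le j\le k),
\]
since then summing over $j$ and dividing by $n$ even gives the slightly sharper bound $k/(n\,\Im(z))$, which certainly implies the stated one.

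For a fixed $j$ I would abbreviate $M\defeq Y_{j-1}-z$, $R\defeq M^{-1}$, $v\defeq v_j$, $\varepsilon\defeq\varepsilon_j$. Because $Y_{j-1}$ is real symmetric, $M$ is complex symmetric and normal, so $R^{T}=R$, $R^{\ast}=\overline R$, and $R^{\ast}R=RR^{\ast}$. From $M^{\ast}-M=z-\overline z$ one gets the resolvent identity $R-R^{\ast}=2i\,\Im(z)\,RR^{\ast}$, and since $v$ is real this yields the key identity
\[
\Im\bigl(v^{T}Rv\bigr)\ =\ \Im(z)\,\norm{Rv}^{2}\ \ge\ 0 .
\]
In particular $1+\varepsilon\,v^{T}Rv$ has nonzero imaginary part $\varepsilon\,\Im(z)\,\norm{Rv}^{2}$, so Sherman--Morrison applies and, after taking traces and using cyclicity,
\[
\tr[(Y_j-z)^{-1}]-\tr[(Y_{j-1}-z)^{-1}]\ =\ -\,\frac{\varepsilon\,v^{T}R^{2}v}{1+\varepsilon\,v^{T}Rv}.
\]
I would then estimate the denominator from below by $|1+\varepsilon\,v^{T}Rv|\ge|\varepsilon|\,\Im(z)\,\norm{Rv}^{2}$ and the numerator from above by $|v^{T}R^{2}v|=|(Rv)^{T}(Rv)|\le\norm{Rv}^{2}$ (Cauchy--Schwarz, using $R^{T}=R$); as $R$ is invertible and $v\neq0$ we have $\norm{Rv}>0$, and the quotient is at most $1/\Im(z)$, as required.

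The main difficulty here is conceptual rather than computational: since $\Ecal$ carries no control on its operator norm, the naive estimate via the resolvent identity $(Y-z)^{-1}-(Y+\Ecal-z)^{-1}=(Y-z)^{-1}\Ecal(Y+\Ecal-z)^{-1}$ only gives something like $\opnorm{\Ecal}/\Im(z)^{2}$, which is useless. The point of the rank-one reduction is that the possibly huge factor $\varepsilon_j$ in the numerator of the Sherman--Morrison correction is cancelled by the identical factor in its denominator, so the per-step bound is $\varepsilon_j$-free. An alternative I would keep in reserve is to combine the rank bound $\norm{F_{Y}-F_{Y+\Ecal}}_{\infty}\le k/n$ for spectral distribution functions with an integration by parts against the Cauchy/Poisson kernel $\int_{\R}|x-z|^{-2}\,dx=\pi/\Im(z)$; this is arguably more elementary but produces the worse constant $\pi$ in place of $2$, so I would prefer the telescoping argument.
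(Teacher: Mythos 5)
Your argument is correct. Note first that the paper does not actually prove Lemma~\ref{lem:rankinequality}; it is quoted as a well-known rank inequality with a reference to the literature, so there is no in-paper proof to compare against. Your telescoping Sherman--Morrison argument is a legitimate, self-contained proof: the spectral decomposition $\Ecal=\sum_j\varepsilon_j v_jv_j^T$ is available because $\Ecal$ is real symmetric; each intermediate matrix $Y_{j-1}$ is again real symmetric, so $M=Y_{j-1}-z$ is invertible for $\Im(z)>0$, complex symmetric and normal; the identity $\Im(v^TRv)=\Im(z)\|Rv\|^2>0$ correctly guarantees that $1+\varepsilon v^TRv\neq 0$, so Sherman--Morrison applies; and the two estimates $|v^TR^2v|=|(Rv)^T(Rv)|\le\|Rv\|^2$ (the complex-bilinear form $w^Tw=\sum_i w_i^2$ satisfies $|w^Tw|\le\|w\|^2$) and $|1+\varepsilon v^TRv|\ge|\Im(1+\varepsilon v^TRv)|=|\varepsilon|\,\Im(z)\|Rv\|^2$ combine to give the $\varepsilon$-free per-step bound $1/\Im(z)$. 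Summing yields $k/(n\Im(z))$, which is in fact sharper than the stated $2k/(n\Im(z))$ (and is sharp per step, as the example $n=1$, $Y=0$, $\varepsilon\to\infty$, $z=i\eta$ shows). Your remark on why the naive resolvent-identity bound fails, and your fallback via $\|F_Y-F_{Y+\Ecal}\|_\infty\le k/n$ and the Poisson kernel (constant $\pi$ instead of $2$), are both accurate. No gaps.
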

With this lemma, the following theorem follows directly from Theorem~\ref{thm:bandmatrix}:

\begin{theorem}\label{thm:nonperiodic}
Let $(a_n)_n$ be an approximately uncorrelated triangular scheme, $(h_n)_n$ a sequence of $n$-halfwidths and $(X^{NP}_n)_n$ the non-periodic random matrices which are based on $(a_n)_n$ with halfwidth $h$. We assume that
\[
h_n\to\infty \qquad \text{but} \qquad \lim_{n\to\infty}\frac{h_n}{n}\in \{0,1\}.
\]
Then we obtain the following results:
\begin{enumerate}[i)]
\item The semicircle law holds for $(X^{NP}_n)_n$ in probability. 
\item If $(1/h_n)^p_n$ is summable for some $p\in\N$, and the sequences $(C^{(\ell)}_n)_n$ from condition \eqref{eq:AU2} are summable for all $\ell\in\N$, then the semicircle law holds for $X_n^{NP}$ almost surely.
\end{enumerate}
\end{theorem}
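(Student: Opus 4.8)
The plan is to reduce Theorem~\ref{thm:nonperiodic} to Theorem~\ref{thm:bandmatrix} by a rank‑perturbation argument, exactly as announced in the text. Given the halfwidths $(h_n)_n$, set $b_n\defeq\min(2h_n-1,n)$ for the associated bandwidths and let $(X_n^{P})_n$ be the periodic random band matrices based on $(a_n)_n$ with these bandwidths as in Definition~\ref{def:bandmatrices}; by the remark following that definition these are precisely periodic band matrices in the sense of Section~\ref{sec:bandmatrix}. Since $1\le h_n\le n$ one has $b_n\ge h_n$, so $b_n\to\infty$ because $h_n\to\infty$. Hence Theorem~\ref{thm:bandmatrix}~$i)$ applies and the ESDs $\sigma_n^{P}$ of $X_n^{P}$ converge weakly to the semicircle law $\sigma$ in probability. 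Moreover, if $(1/h_n)^p$ is summable for some $p\in\N$, then so is $(1/b_n)^p$ (again by $b_n\ge h_n$); combined with the assumed summability of the $C_n^{(\ell)}$, Theorem~\ref{thm:bandmatrix}~$ii)$ gives $\sigma_n^{P}\to\sigma$ weakly almost surely.

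Next I would estimate the distance between $X_n^{NP}$ and $X_n^{P}$. As recorded in the excerpt, $X_n^{P}-X_n^{NP}$ is a symmetric matrix of rank $2\min(h_n,\,n-h_n+1)$, so Lemma~\ref{lem:rankinequality} yields, for every $z\in\C_+$,
\[
\Bigl|\tfrac1n\tr(X_n^{NP}-z)^{-1}-\tfrac1n\tr(X_n^{P}-z)^{-1}\Bigr|\ \le\ \frac{4\min(h_n,\,n-h_n+1)}{n\,\Im(z)}.
\]
The essential point is that this bound is \emph{deterministic}. The hypothesis $\lim_n h_n/n\in\{0,1\}$ forces $\min(h_n,n-h_n+1)/n\to0$: if $h_n/n\to0$ then this ratio is $\le h_n/n$, and if $h_n/n\to1$ then it is $\le(n-h_n+1)/n=1-h_n/n+1/n$. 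Thus the right‑hand side above tends to $0$ as $n\to\infty$, for every fixed $z\in\C_+$.

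Finally I would assemble the two ingredients. Writing $s_{\mu}(z)\defeq\int(x-z)^{-1}\mu(\de x)$ for the Stieltjes transform, for each $z\in\C_+$,
\[
\bigl|s_{\sigma_n^{NP}}(z)-s_\sigma(z)\bigr|\ \le\ \frac{4\min(h_n,n-h_n+1)}{n\,\Im(z)}\ +\ \bigl|s_{\sigma_n^{P}}(z)-s_\sigma(z)\bigr|,
\]
where the first summand is a deterministic null sequence and the second tends to $0$ in probability (resp.\ almost surely) by the first step. Hence $s_{\sigma_n^{NP}}(z)\to s_\sigma(z)$ in probability (resp.\ almost surely) for every $z\in\C_+$. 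Applying this along a countable subset of $\C_+$ with an accumulation point and invoking the standard fact that pointwise convergence of Stieltjes transforms there characterizes weak convergence of the underlying probability measures (together with a routine subsequence argument for the ``in probability'' mode), one concludes $\sigma_n^{NP}\to\sigma$ weakly in probability (resp.\ weakly almost surely), which is exactly $i)$ and $ii)$.

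I do not expect a genuine obstacle here: this is the advertised standard perturbation reduction, and the deterministic nature of the rank bound makes the transfer of all convergence modes automatic. The only points needing a little care are bookkeeping ones — verifying $b_n\ge h_n$ so that both $b_n\to\infty$ and the summability of $(1/b_n)^p$ are inherited from the halfwidths, and checking that the assumption on $h_n/n$ is precisely what makes the rank‑to‑dimension ratio of $X_n^{P}-X_n^{NP}$ vanish (and fails exactly when $h_n/n\to p\in(0,1)$, the regime handled in the second part of the paper).
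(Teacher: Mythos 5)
Your proposal is correct and follows essentially the same route as the paper: reduce to Theorem~\ref{thm:bandmatrix} via the associated bandwidths, bound the rank of $X_n^{P}-X_n^{NP}$ by $2\min(h_n,n-h_n+1)$, and transfer convergence through the deterministic Stieltjes-transform estimate of Lemma~\ref{lem:rankinequality}. The only (immaterial) difference is a constant factor in the perturbation bound, where your $4\min(h_n,n-h_n+1)/(n\Im(z))$ is in fact the literal application of the lemma with $k=2\min(h_n,n-h_n+1)$.
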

\begin{proof}
Note that $h_n\to\infty \Leftrightarrow b_n\to\infty$ and for all $p\in\N$, $(h_n^{-p})_n$ is summable iff $(b_n^{-p})_n$ is summable. Therefore, under the conditions of Theorem~\ref{thm:nonperiodic} $i)$ resp.\ $ii)$, the conclusions of Theorem~\ref{thm:bandmatrix} $i)$ resp.\ $ii)$ hold.
It follows with the discussion before Lemma~\ref{lem:rankinequality} that $X_n^{P}-X_n^{NP}$ has rank $2\cdot\min(h_n,\,n-h_n +1)$. Therefore, if $s_n^{P}$ resp.\ $s_n^{NP}$ denote the Stieltjes transforms of the ESDs of $X_n^{P}$ resp.\ $X_n^{NP}$, then we find with Lemma~\ref{lem:rankinequality} that for $z\in\C_+$ arbitrary,
\[
\abs{s_n^P(z) - s_n^{NP}(z)} \leq \frac{2}{\Im(z)}\min\left(\frac{h_n}{n},\, 1 - \frac{h_n}{n} + \frac{1}{n}\right) \xrightarrow[n\to\infty]{} 0 \quad\text{surely.}
\] 
This concludes the proof, since under the conditions of Theorem~\ref{thm:nonperiodic} $i)$ resp.\ $ii)$, $s_n^P(z)$ converges to $s^{\sigma}(z)$ in probability resp.\ almost surely, where $s^{\sigma}$ denotes the Stieltjes transform of the semicircle distribution.
\end{proof}

\begin{corollary}\label{cor:applications}
Let $(a_n)_n$ be a Curie-Weiss ensemble with inverse temperature $\beta\in(0,1]$ (cf.\ Section~\ref{sec:curieweiss}) or an approximately uncorrelated Gaussian ensemble (cf.\ Section~\ref{sec:corgauss}). Let $h=(h_n)_n$ be a sequence of $n$-halfwidths with $h_n\to \infty$ and $\lim_n h_n/n \in\{0,1\}$. Let $(X^{NP}_n)_n$ be the non-periodic random band matrices which are based on $(a_n)_n$ with halfwidth $h$.
Then the following statements hold:
\begin{enumerate}[i)]
\item The semicircle law holds for $(X^{NP}_n)_n$ in probability.
\item If $\frac{1}{h_n^p}$ is summable over $n$ for some $p\in\N$, then the semicircle law holds almost surely for $(X^{NP}_n)_n$. 
\end{enumerate}
\end{corollary}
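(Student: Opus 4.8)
The plan is to recognize Corollary~\ref{cor:applications} as a direct consequence of Theorem~\ref{thm:nonperiodic} applied to the two specific ensembles in question, exactly as Corollaries~\ref{cor:CurieWeiss} and~\ref{cor:Gaussian} follow from Theorem~\ref{thm:bandmatrix}. The only thing that needs to be checked is that each ensemble satisfies the hypotheses of Theorem~\ref{thm:nonperiodic}, i.e.\ that it is an approximately uncorrelated triangular scheme, and for part $ii)$ that the sequences $(C^{(\ell)}_n)_n$ are summable for all $\ell$.

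First I would treat the Curie-Weiss case. By the computation cited in the proof of Corollary~\ref{cor:CurieWeiss} (Theorem 5.17 in \cite{Kirsch:2015}, together with \cite{Hochstattler:Kirsch:Warzel:2016}), a Curie-Weiss$(\beta)$ ensemble with $\beta\in(0,1]$ is an approximately uncorrelated triangular scheme with constants $C^{(\ell)}_n\defeq 0$ (and explicit $C(\ell)$ depending on the regime of $\beta$). Since the zero sequence is trivially summable, both the hypothesis of Theorem~\ref{thm:nonperiodic} $i)$ and the additional summability hypothesis of Theorem~\ref{thm:nonperiodic} $ii)$ are met. Hence, under $h_n\to\infty$ and $\lim_n h_n/n\in\{0,1\}$, part $i)$ gives the semicircle law in probability for $(X^{NP}_n)_n$, and if moreover $1/h_n^p$ is summable for some $p\in\N$, part $ii)$ gives the semicircle law almost surely.

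Next I would treat the approximately uncorrelated Gaussian case. By Lemma~\ref{lem:AUGaussian}, such an ensemble is an approximately uncorrelated triangular scheme with $C(\ell)\defeq\Pcal\Pcal(\ell)$ and $C^{(\ell)}_n\defeq\Pcal\Pcal(2\ell)/n^2$, and the latter sequences are summable over $n$ for every $\ell\in\N$. Thus again all hypotheses of Theorem~\ref{thm:nonperiodic} are satisfied, and parts $i)$ and $ii)$ of the present corollary follow immediately from parts $i)$ and $ii)$ of Theorem~\ref{thm:nonperiodic} respectively.

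There is essentially no obstacle here: the work has already been done in Theorem~\ref{thm:nonperiodic} and in the ensemble verifications (Corollary~\ref{cor:CurieWeiss}'s proof and Lemma~\ref{lem:AUGaussian}). The proof is a one-line dispatch — cite that both ensembles are approximately uncorrelated triangular schemes with summable $(C^{(\ell)}_n)_n$, then invoke Theorem~\ref{thm:nonperiodic}. The only point requiring the tiniest care is noting that the constant sequence $0$ counts as summable, so that the Curie-Weiss ensemble qualifies for the almost sure statement without any extra hypothesis beyond summability of $1/h_n^p$.
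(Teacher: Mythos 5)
Your proposal is correct and matches the paper's own proof exactly: the paper likewise cites Theorem~\ref{thm:nonperiodic} together with the proof of Corollary~\ref{cor:CurieWeiss} and Lemma~\ref{lem:AUGaussian}, relying on $C^{(\ell)}_n\equiv 0$ for Curie-Weiss and $C^{(\ell)}_n=\Pcal\Pcal(2\ell)/n^2$ for the Gaussian case to meet the summability hypothesis. Nothing is missing.
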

\begin{proof}
This is a direct consequence Theorem~\ref{thm:nonperiodic} $i)$ and $ii)$, the proof of Corollary~\ref{cor:CurieWeiss} and Lemma~\ref{lem:AUGaussian}.
\end{proof}

\section{Weighted Ensembles with Two Layers of Correlation}
\label{sec:weightedmatrix}
\subsection{Setup and Results}
We consider the following setup: We assume that $(a_n)_n$ is a sequence of real-symmetric $n\times n$ random matrices. We do not assume the families of random variables $(a_n(i,j))_{1\leq i\leq j\leq n}$ to be independent, nor do we require them to be standardized. Rather, we allow arbitrarily high correlation (even equality) of random variables which belong to certain subfamilies, and that random variables from different subfamilies are approximately uncorrelated. To make this precise, we assume that for all $n\in\N$, $\sim_n$ is an equivalence relation on $\oneto{n}^2$ which satisfies the following conditions (when $n$ can be derived from the context, we write $\sim$ instead $\sim_n$): There exists a $B\in\N$ independent of $n$ such that
\begin{align*}
(E1) &\quad \max_{p\in\oneto{n}} \#\left\{(q,r,s)\in\oneto{n}^3 ~|~ (p,q)\sim (r,s)\right\} = o(n^2),\\
(E2) &\quad \max_{p,q,r\in\oneto{n}} \# \left\{s\in\oneto{n} ~|~ (p,q)\sim (r,s)\right\} \leq B,\\
(E3) & \quad \#\left\{(p,q,r)\in\oneto{n}^3 ~|~ (p,q)\sim (q,r)\wedge r\neq p\right\} = o(n^2).
\end{align*}
These are exactly the same conditions as (C1), (C2) and (C3) in \cite{Schenker:Schulz-Baldes:2005}. For some of our results, we also require the following conditions: There exists a fixed $\delta>0$ independent of $n$ such that
\begin{align*}
(E1') &\quad \max_p \#\left\{(q,r,s)\in\oneto{n}^3 ~|~ (p,q)\sim (r,s)\right\} = O(n^{2-\delta}),\\
(E3') & \quad \#\left\{(p,q,r)\in\oneto{n}^3 ~|~ (p,q)\sim (q,r)\wedge r\neq p\right\} = O(n^{2-\delta}).
\end{align*}
We observe that $(E1')$ and $(E3')$ are slightly stronger than their counterparts $(E1)$ and $(E3)$. The stronger conditions will be used to derive almost sure convergence results.

In the setup of \cite{Schenker:Schulz-Baldes:2005}, the entries of $a_n$ were assumed to be standardized, have uniformly bounded absolute moments of all orders, and that $\sim$ was required to satisfy $(E1)$, $(E2)$, $(E3)$. Further, it was assumed that the families
\begin{equation}
\label{eq:families}
(a_n(P))_{P\in M},\qquad M\in\nicefrac{\oneto{n}^2}{\sim}
\end{equation}
be independent while no independence requirement was made for members of the same equivalence class (for example, they could be all the same random variable). Due to independence between different equivalence classes it was also necessary to assume that $(p,q)\sim(q,p)$ for all $(p,q)\in\oneto{n}^2$, since the matrices $a_n$ are symmetric.

In our setup, we also assume that $\sim$ satisfies $(E1)$, $(E2)$ and $(E3)$ (and for some results $(E1')$, $(E2)$ and $(E3')$) and that $(p,q)\sim(q,p)$ for all $(p,q)\in\oneto{n}^2$. However, we drop the standardization requirement and the requirement of independence between equivalence classes. We instead require entries from different equivalence classes to be approximately uncorrelated in the sense of Section~\ref{sec:AUsetup}: Let $\ell,s\geq 0$ be arbitrary, $P_1,\ldots,P_{s},Q_1,\ldots,Q_{\ell}\in\oneto{n}^2$ be distinct index pairs, where $P_1,\ldots,P_s$ stem from distinct $\sim_n$-equivalence classes, and let $\delta_{1}\ldots,\delta_{\ell}\in\N$, then

\begin{align}
&\quad \bigabs{\E a_n(P_1)\cdots a_n(P_s) a_n(Q_{1})^{\delta_1}\cdots a_n(Q_{\ell})^{\delta_{\ell}}} \ \leq\ \frac{C(s + \delta_1+\ldots +\delta_{\ell})}{n^{\frac{s}{2}}},\label{eq:A1}\tag{A1}\\
&\quad \bigabs{\E a_n(P_1)^2\cdots a_n(P_s)^2-1}\ \leq\ C^{(s)}_n,\label{eq:A2}\tag{A2} 
\end{align}
where $C(s)$ resp.\ $(C^{(s)}_n)_n$ are constants resp.\ sequences for all $s\in\N$, where for all $s$, $C^{(s)}_n\to 0$ as $n\to\infty$. Note that \eqref{eq:A1} implies that all entries in $a_n$ have uniformly bounded (absolute) moments of all orders and \eqref{eq:A1} and \eqref{eq:A2} together imply that all entries in $a_n$ are asymptotically standardized. Clearly, the case of standardized entries with independent families \eqref{eq:families} -- the setup of \cite{Schenker:Schulz-Baldes:2005} -- is included in above setup as a special case. 
\begin{definition}
\label{def:SSB-HKW}
A triangular scheme $(a_n)_n$ satisfying \eqref{eq:A1} and \eqref{eq:A2}, where equivalence within $\oneto{n}^2$ is governed by a relation $\sim_n$ satisfying $(E1)$, $(E2)$, and $(E3)$ (or $(E1')$, $(E2)$ and $(E3')$ when this is explicitly stated), will be called \emph{SSB-HKW correlated}.
\end{definition}

Next, we assume that $w: [0,1]\to\R$ is a Riemann integrable weight function. In particular, $w$ is bounded, $\abs{w}\leq W$ for some $W\in\R_+$. We consider weighted matrices of the form
\begin{equation}
\label{eq:matrixform}
X_n \defeq \frac{1}{\sqrt{n}}\left[w(\abs{i-j})a_n(i,j)\right]_{1\leq i,j \leq n}.
\end{equation}
We observe that within the diagonals of $X_n$, the same weight is applied. This could be generalized, as has been done in \cite{Zhu:2020} via graphon theory. However, our study is motivated primarily by band matrices which makes our modeling natural.

Under the setup we just described, we now formulate our main theorem, which summarizes all the results of this second part of the paper:

\begin{theorem}
\label{thm:weightedmatrix}
Let $\sigma_n$ be the ESD of $X_n$ as in \eqref{eq:matrixform}, where $(a_n)_n$ is SSB-HKW correlated. Then $\sigma_n$ converges weakly in probability to a symmetric and compactly supported probability measure $\mu$ on $(\R,\Bcal)$, which is uniquely determined by its moments
\[
\forall\, k\in\N: \integrala{\mu}{x^k} = \sum_{\pi\in\Ncal\Pcal\Pcal(k)} J_{w}(\pi),
\]
where the constants $J_{w}(\pi)$ depend on the weight function $w$ and the partition $\pi$, and can be calculated recursively as described in Lemma~\ref{lem:Jwpi} below. Further, set
\[
\forall\, x\in[0,1] : \quad \varphi(x) \defeq \int_0^1 w^2(\abs{x-y})\de y
\]
and  $\varphi_0\defeq \int_0^1\varphi(x)\de x$. Then the limiting variance is given by $\varphi_0$, i.e.\ $\integrala{\mu}{x^2}=\varphi_0$, and can be calculated by
\[
\varphi_0 = 2\int_0^1 (1-x)w^2(x)\de x.
\]
In particular, $\varphi_0 =0$ if and only if $w = 0$ on $[0,1]$ $\lebesgue$-almost surely.	 In the case that $\varphi_0 >0$, then the following statements are equivalent:
\begin{enumerate}[a)]
\item The semicircle law holds for $\frac{1}{\sqrt{\varphi_0}}X_n$ in probability.
\item $\varphi$ is constant, in particular, $\varphi\equiv\varphi_0$.
\item $\varphi \equiv \int_0^1 w^2(x)\de x$,
\item $w^2$ is $\lebesgue$-a.s.\ symmetric around $1/2$, i.e.\ $w^2(x)=w^2(1-x)$ for $\lebesgue$-a.a.\ $x\in[0,1]$.
\end{enumerate}
Further, if $(E1)$ and $(E3)$ are replaced by $(E1')$ and $(E3')$, all statements about weak convergence in probability can be replaced by weak convergence almost surely.
\end{theorem}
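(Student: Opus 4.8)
The plan is to prove all assertions by the method of moments, in the refined summand-by-summand form developed in the proof of Theorem~\ref{thm:bandmatrix}, now merged with the graph-theoretic bookkeeping of \cite{Schenker:Schulz-Baldes:2005} adapted to the two-layer correlation structure. Writing $w(\underline t)\defeq\prod_{\ell=1}^{k}w(|t_\ell-t_{\ell+1}|)$ (indices cyclic) and $a_n(\underline t)\defeq a_n(t_1,t_2)\cdots a_n(t_k,t_1)$, one has
\[
\integrala{\sigma_n}{x^k}=\frac1n\tr X_n^k=\frac{1}{n^{1+k/2}}\sum_{\underline t\in\oneto{n}^k}w(\underline t)\,a_n(\underline t).
\]
I would identify each $\underline t$ with the profile $\pi(\underline t)$ of its Eulerian graph and sort as in \eqref{eq:sum3}, obtaining a finite decomposition $\integrala{\sigma_n}{x^k}=\sum_{\pi\in\Pi(k)}D_n^{(\pi)}$, and then analyze $\E D_n^{(\pi)}$ and arbitrarily high even central moments of $D_n^{(\pi)}$ separately for each profile, in the spirit of \eqref{eq:rexpconv}--\eqref{eq:rvarconv}; convergence of all moments of $\sigma_n$ then yields weak convergence to the measure carrying those moments (as in \cite{Fleermann:2015,Fleermann:2019}), provided that measure is moment-determinate.

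\textbf{Expectation, the limit measure, and the second moment.} If $\pi$ admits an odd edge, \eqref{eq:A1} supplies a factor $n^{-1/2}$ for every ``solitary'' edge (one lying alone in its $\sim_n$-class), and the counting bounds of Lemma~\ref{lem:tuplesetcount} with $b_n=n$ and $(E2)$ — refined by $(E1)$ and $(E3)$ to discard the tuples in which a tree-edge is forced equivalent to another edge (an $o(1)$-fraction by $(E1)$) or in which two consecutive steps are equivalent without closing a loop (negligible by $(E3)$) — give $\E D_n^{(\pi)}\to 0$; profiles carrying an edge of multiplicity $\geq 4$ are discarded the same way. Hence only the double-edge profile $\pi=(0,k/2,0,\dots,0)$ survives, and within it only the tuples whose Eulerian graph is a tree on $k/2+1$ distinct vertices with pairwise $\sim_n$-inequivalent edges. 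For these the step-pairing is a non-crossing pair partition $\pi\in\Ncal\Pcal\Pcal(k)$, one has $\E a_n(\underline t)\to 1$ uniformly by \eqref{eq:A2}, and reading the vertex sum $n^{-(1+k/2)}\sum w(\underline t)$ as a Riemann sum yields the iterated integral $J_w(\pi)$ of Lemma~\ref{lem:Jwpi} (peeling a leaf at position $t$ with parent at position $x$ integrates out a factor $\int_0^1 w^2(|x-t|)\,\de t$). Thus $\E\integrala{\sigma_n}{x^k}\to\sum_{\pi\in\Ncal\Pcal\Pcal(k)}J_w(\pi)$, and the crude estimate $|\E\integrala{\sigma_n}{x^k}|\leq W^k n^{-(1+k/2)}\sum_{\underline t}|\E a_n(\underline t)|\leq(\mathrm{const})^k$ (the last step exactly as in Theorem~\ref{thm:bandmatrix} with $b_n=n$) forces at most exponential moment growth, so these moments determine a unique, symmetric (the odd-$k$ moments vanish), compactly supported probability measure $\mu$ by Carleman's criterion. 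Specializing $k=2$, where $\Ncal\Pcal\Pcal(2)$ is the single pair partition of $\oneto{2}$, gives $\integrala{\mu}{x^2}=\lim_n n^{-2}\sum_{i,j}w^2(|i-j|)=\int_0^1\!\!\int_0^1 w^2(|x-y|)\,\de x\,\de y=\int_0^1\varphi(x)\,\de x=\varphi_0$; the substitution $u=|x-y|$ (whose law on $[0,1]$ has density $2(1-u)$) rewrites $\varphi_0=2\int_0^1(1-x)w^2(x)\,\de x$, which vanishes iff $w^2=0$ $\lebesgue$-a.e.\ since $2(1-x)>0$ on $[0,1)$.

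\textbf{Variance, almost sure convergence, and the four equivalences.} For each $\pi$ I would bound $\E|D_n^{(\pi)}-\E D_n^{(\pi)}|^z$ along the lines of \eqref{eq:zthmoment}--\eqref{eq:Slarge}: split over subsets of the $z$ copies, factor the expectation via \eqref{eq:A1}, and count via Lemma~\ref{lem:tuplesetcount} with $b_n=n$ together with the edge-overlap savings coming from $(E1)$ and $(E3)$. Under $(E1)$, $(E3)$ this gives $\V D_n^{(\pi)}\to 0$, hence $\sigma_n\to\mu$ weakly in probability; under $(E1')$, $(E3')$ the same estimates yield a genuine polynomial rate $n^{-\delta'}$, which raised to a sufficiently high even $z$ is summable, so Borel--Cantelli upgrades the convergence to almost sure. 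For the equivalences, assume $\varphi_0>0$ and put $F(t)\defeq\int_0^t w^2$, so that $\varphi(x)=F(x)+F(1-x)$ is continuous on $[0,1]$. (b)$\Rightarrow$(a): substituting $\varphi\equiv\varphi_0$ into the recursion of Lemma~\ref{lem:Jwpi} gives $J_w(\pi)=\varphi_0^{k/2}$ for every $\pi$, so $\integrala{\mu}{x^k}=\varphi_0^{k/2}\,\#\Ncal\Pcal\Pcal(k)=\varphi_0^{k/2}\Cat_{k/2}\one_{2\N}(k)$, the scaled semicircle moments. (a)$\Rightarrow$(b): already $k=4$ suffices, as both non-crossing pairings of $\oneto{4}$ evaluate to $J_w(\pi)=\int_0^1\varphi(x)^2\,\de x$, so semicircularity forces $\int_0^1\varphi^2=\varphi_0^2=\big(\int_0^1\varphi\big)^2$, whence $\varphi$ is $\lebesgue$-a.e.\ constant by the equality case of Cauchy--Schwarz, and then $\varphi\equiv\varphi_0$ by continuity. (b)$\Rightarrow$(d): differentiating $F(x)+F(1-x)=\mathrm{const}$ for a.e.\ $x$ gives $w^2(x)=w^2(1-x)$ a.e. (d)$\Rightarrow$(c): then $F(1-x)=F(1)-F(x)$, so $\varphi(x)\equiv F(1)=\int_0^1 w^2$. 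Finally (c)$\Rightarrow$(b) is trivial, closing the cycle, so (a)--(d) are equivalent.

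\textbf{Main obstacle.} The hard part will be the combined combinatorial estimate in the expectation step: one must simultaneously exploit \eqref{eq:A1}--\eqref{eq:A2}, which give only \emph{approximate} factorization across $\sim_n$-classes in place of the exact independence available in \cite{Schenker:Schulz-Baldes:2005}; use $(E1)$, $(E2)$, $(E3)$ (or their primed strengthenings) to eliminate accidental class-coincidences and the degenerate consecutive-equivalence walks; and carry the merely bounded weight $w$ through every vertex sum so that the surviving tree-tuples reproduce exactly the iterated integrals $J_w(\pi)$. Verifying that these three mechanisms are compatible — in particular that $(E1)$ and $(E3)$ alone suffice to annihilate every non-tree and every class-coincident configuration while only $|w|\leq W$ is used — is where the real work lies; by contrast the $L^2$-side, the moment-determinacy, the second-moment identities, and the four analytic equivalences are, granted Lemma~\ref{lem:Jwpi}, comparatively routine.
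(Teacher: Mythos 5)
Your analytic second half is essentially the paper's proof: the identification of $\integrala{\mu}{x^2}$ with $\varphi_0$, the density $2(1-u)$ of $|X-Y|$ giving $\varphi_0=2\int_0^1(1-x)w^2(x)\,\de x$ (the paper gets this by a Fubini computation instead, but your route is equally valid), the Catalan-moment argument for (b)$\Rightarrow$(a), the fourth-moment computation $J_w(\pi_1)=J_w(\pi_2)=\int_0^1\varphi^2$ with the equality case of Cauchy--Schwarz (the paper uses strict Jensen, same thing) for (a)$\Rightarrow$(b), the differentiation of $F(x)+F(1-x)$ at continuity points of $w^2$ for (b)$\Leftrightarrow$(d)$\Rightarrow$(c), and Carleman plus the $(4W)^k$ bound for determinacy and compact support. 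The identification of the surviving configurations (double-edged trees on $k/2+1$ vertices, backtracking, pairwise $\sim_n$-inequivalent edges, non-crossing step-pairing) and the Riemann-sum limit $J_w(\pi)$ with its leaf-peeling recursion also match Lemma~\ref{lem:Jwpi}. Where you genuinely diverge is the organizing principle of the combinatorics: you stratify $\oneto{n}^k$ by the Eulerian-graph multiplicity profile $\pi(\ubar{t})$ as in \eqref{eq:sum3}, whereas the paper stratifies by the partition $\pi_{\ubar{t}}$ of the step set $\oneto{k}$ induced by $\sim_n$ (i.e.\ $i\sim j$ iff $(t_i,t_{i+1})\sim_n(t_j,t_{j+1})$). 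This is not a cosmetic difference: both the decay in \eqref{eq:A1} (one factor $n^{-1/2}$ per \emph{singleton $\sim_n$-class}, not per multiplicity-one edge) and the counting inputs $(E1)$--$(E3)$ are indexed by $\sim_n$-classes, so the profile of the geometric graph carries essentially no usable information on its own; within each profile you would immediately have to re-stratify by the class structure, which collapses your decomposition back to the paper's.

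The concrete gap is that the counting lemmas which make this stratification work are neither stated nor sketched, and Lemma~\ref{lem:tuplesetcount} with $b_n=n$ cannot substitute for them, since it counts by vertices and is blind to $\sim_n$. The paper's engine is Lemma~\ref{lem:tuplescount} ($\#T_{n,k}(\pi)\leq n^{r+1}B^{k-r-1}$ for $r$ classes, improved to $n^rB^{k-r}$ when a singleton class exists, proved by an edge-by-edge construction paying $n$ per new class and $B$ per revisited class via $(E2)$), Lemma~\ref{lem:tuplescount2} for pairs of tuples organized by traversing versus non-traversing square partitions (this is what controls the variance), and the Schenker--Schulz-Baldes reduction (Lemma~\ref{lem:reductionSSB}) which uses $(E1)$ and $(E3)$ to kill crossing pair partitions and non-backtracking equivalent pairs. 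Your ``$(E1)$ and $(E3)$ refine the count'' is a promissory note for exactly this machinery, as you acknowledge. A second, smaller omission: your variance paragraph invokes only \eqref{eq:A1} and counting, but for the dominant term (edge-class-disjoint pairs of backtracking tuples, i.e.\ non-traversing square partitions) \eqref{eq:A1} only yields an $O(1)$ bound on $\E a_n(\ubar{t})a_n(\ubar{t}')-\E a_n(\ubar{t})\E a_n(\ubar{t}')$; the cancellation there comes from \eqref{eq:A2}, which forces both products to be close to $1$ (this is Lemma~\ref{lem:Jwpiconvergence}~c)), and for the almost-sure statement this is also where the summability of $(C_n^{(\ell)})_n$ enters. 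Your plan to take arbitrarily high central moments to manufacture summability is unnecessary here (the traversing contribution is already $O(n^{-1-\delta})$ under $(E3')$) and would in any case require $z$-fold analogues of Lemma~\ref{lem:tuplescount2} that you do not provide.
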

\subsection{Examples}
In this subsection we study various types of examples and counterexamples which illustrate the reach of Theorem~\ref{thm:weightedmatrix}. 
\subsubsection{Repetitions of approximately uncorrelated entries}
Assume that $\sim_n$ and $w$ are as above and that $a_n'$ is an approximately uncorrelated triangular scheme, for example a Curie-Weiss($\beta$)-ensemble as in Section~\ref{sec:curieweiss} or an approximately uncorrelated Gaussian ensemble as in Section~\ref{sec:corgauss}. Let $\ell\in\N$ be the number of equivalence classes induced by $\sim_n$, and let $P_1,\ldots,P_{\ell}\in\oneto{n}^2$ be representatives. Then for all $P\in\oneto{n}^2$ we set $a_n(P)\defeq a_n'(P_i)$, where $i\in\oneto{\ell}$ is the unique index with $P\sim_n P_i$. Then $(a_n)_n$ is an SSB-HKW correlated ensemble as in Definition~\ref{def:SSB-HKW}. 

\subsubsection{The $k$-band model} The $k$-band model generalizes periodic and non-periodic band matrices to multiple bands. We assume that $a_n$ is an SSB-HKW correlated ensemble and that $w=\one_{I_1}+\ldots+\one_{I_k}$, where $(I_j)_{j\in\oneto{k}}$ are non-degenerate intervals in $[0,1]$ that satisfy $I_1 < I_2 <\ldots < I_k$ (where the inequalities are meant element-wise) and we assume that there is an $\epsilon>0$ such that for all $i\neq j\in\oneto{k}$, $\dist(I_i,I_j)\geq \epsilon$, where $\dist(I_i,I_j)\defeq\inf\{\abs{x-y}\,|\, x\in I_j,\,y\in I_j\}$. In this setup, Theorem~\ref{thm:weightedmatrix} ensures convergence of the ESDs of the random matrices $X_n$ based on $a_n$ as in \eqref{eq:matrixform}.
By the symmetry characterization in Theorem~\ref{thm:weightedmatrix} it is immediately clear -- without further calculations -- that the SCL holds iff the intervals are a.s.\ symmetric around $1/2$, which means that for $\lebesgue$-almost all for all $x\in[0,1]$: $x\in I_{i}$ for some $i\in\oneto{k}$ iff $(1-x)\in I_{j}$ for some $j\in\oneto{k}$. As an example, if
\begin{equation}
\label{eq:examplew}
w = \one_{\left[\frac{1}{10},\frac{2}{10}\right]} + \one_{\left[\frac{3}{10},\frac{4}{10}\right]} + \one_{\left[\frac{6}{10},\frac{7}{10}\right]} + \one_{\left[\frac{8}{10},\frac{9}{10}\right]},
\end{equation}
then $w$ is symmetric around $1/2$, so the SCL holds for the associated $X_n$. But if exactly one of the indicators in \eqref{eq:examplew} is dropped, the SCL will fail to hold for $X_n$.

An important application of this observation is for periodic and non-periodic band matrices as in Definition~\ref{def:bandmatrices} with a halfwidth $h_n$ which grows proportionally with $n$. So we assume $h_n/n \to\rho$ with $\rho\in(0,1)$. If $X^{NP}_n$ is based on an approximately uncorrelated triangular scheme $a_n$ with halfwidth $h_n$, then $X^{NP}_n$ and $X_n$ share the lame limiting spectral distribution (LSD) if $X_n$ is as in \eqref{eq:matrixform}, where $a_n$ is the same approximately uncorrelated triangular scheme, $\sim_n$ is the trivial equivalence relation $(a,b)\sim_n(p,q)$ iff $(a,b)=(p,q)$ or $(a,b)=(q,p)$, and where $w=\one_{[0,\rho]}$. This is easily seen, since by Definition~\ref{def:bandmatrices}, 
\[
X_n^{NP}(a,b)\neq 0 \Leftrightarrow \abs{a-b}\leq h_n-1 \Leftrightarrow \frac{\abs{a-b}}{n} \leq \frac{h_n-1}{n} \to \rho,
\]
and $X_n(a,b)\neq 0$ iff $\abs{a-b}/n \leq \rho$ by \eqref{eq:matrixform}. 
Therefore, the limiting spectral distributions of $X_n^{NP}$ and $X_n$ are the same, as can be seen by using standard perturbation arguments (e.g.\ Hoffman-Wielandt inequality). With the same argument we obtain that the LSDs of $X_n^{P}$ and $X'_n$ are the same, where $X'_n$ is constructed the same as $X_n$, but we change the weight $w$ to the weight $w'=\one_{[0,\rho]\cup[1-\rho,1]}$. Since $w'$ is symmetric around $1/2$, but $w$ is not, the SCL holds for $X_n^{P}$, but not for $X_n^{NP}$.
Of course, for independent entries, these statements about $X^{P}_n$ and $X_n^{NP}$ are well-known \cite{Bogachev:Molchanov:Pastur:1991}.

\subsubsection{Block matrices} 
We consider multiple types of block matrices -- all with finitely many different blocks -- which can be treated with Theorem~\ref{thm:weightedmatrix}, and we will also construct a type of block matrix which cannot be treated with Theorem~\ref{thm:weightedmatrix}, exemplifying the boundaries of this theorem.\newline
\emph{Toeplitz block matrices with $k$ diagonals.} A Toeplitz block matrix has the same blocks along diagonals, thus is of the form
\[
T^{(k)}_n = \frac{1}{\sqrt{kn}}
\begin{pmatrix}
A^{(1)}_n & A^{(2)}_n & A^{(3)}_n & \cdots & A^{(k)}_n\\
A^{(2)T}_n & A^{(1)}_n & A^{(2)}_n & \cdots & A^{(k-1)}_n\\
A^{(3)T}_n & A^{(2)T}_n & A^{(1)}_n & \cdots & A^{(k-2)}_n\\
\vdots & \vdots & \vdots & \ddots & \vdots\\
A^{(k)T}_n & A^{(k-1)T}_n & A^{(k-2)T}_n &\cdots & A^{(1)}_n
\end{pmatrix},
\]
where the matrix $A_n^{(1)}$ is symmetric and $T$ indicates the transpose of a given matrix. 
We assume that the family
\[
\{A^{(1)}_n(i,j)\,|\, 1\leq i\leq j\leq n\} \cup \{A^{(\ell)}(P)\,|\, P\in\oneto{n}^2, \ell\in\{2,\ldots,k\}\}
\]
is approximately uncorrelated. Then the semicircle holds almost surely for $T^{(k)}_n$ by Theorem~\ref{thm:weightedmatrix}. To see this, we need to verify that $\sim_{kn}$ induced by the structure of $T^{(k)}_n$ satisfies the conditions $(E1')$, $(E2)$ and $(E3')$. For $(E1')$, fix a $p\in\oneto{kn}$, then for any $q\in\oneto{kn}$ there are at most $4k$ pairs $(r,s)$ in $\oneto{n}^2$ equivalent to $(p,q)$. Since  $4k^2n=O((kn)^{1.9})$, $(E1')$ is satisfied. For $(E2)$, fix $p,q,r\in\oneto{kn}$. Then are at most two $s\in\oneto{kn}$ so that $(p,q)$ is equivalent to $(r,s)$. Hence, $(E2)$ is satisfied with $B=2$. For $(E3')$, we note that if $(p,q)$ is fixed, then there is no $r\neq q$ so that $(p,q)$ is equivalent to $(q,r)$, so that $(E3')$ is satisfied.

\noindent
\emph{Hankel block matrices with $k$ skew diagonals.} For $k$ odd, a Hankel block matrix has the same blocks along its skew diagonals, thus has the form
\[
H^{(k)}_n = \frac{1}{\sqrt{\frac{k+1}{2}n}}
\begin{pmatrix}
A^{(1)}_n & A^{(2)}_n & A^{(3)}_n  & \cdots & A^{((k+1)/2)}_n\\
A^{(2)T}_n & A^{(3)}_n &  A^{(4)}_n & \cdots & A^{((k+3)/2)}_n\\
A^{(3)T}_n & A^{(4)T}_n & A^{(5)}_n & \cdots & A^{((k+5)/2)}_n\\
\vdots & \vdots & \vdots & \ddots & \vdots\\
A^{((k+1)/2)T}_n & A^{((k+3)/2)T}_n & A^{((k+5)/2)T}_n & \cdots & A^{(k)}_n
\end{pmatrix},
\]
where the matrices $A^{(1)}_n,A^{(3)}_n,\ldots,A^{(k)}_n$ are symmetric. We assume that the family
\[
\{A^{(\ell)}_n(i,j)\,|\, 1\leq i\leq j\leq n, \ell\in\{1,3,\ldots,k\}\} \cup \{A^{(\ell)}(P)\,|\, P\in\oneto{n}^2, \ell\in\{2,4,\ldots,k-1\}\}
\]
is approximately uncorrelated. Then the semicircle law holds almost surely for $H^{(k)}_n$ by Theorem~\ref{thm:weightedmatrix}. To see this, we again need to verify that $\sim_{kn}$ induced by the structure of $T^{(k)}_n$ satisfies the conditions $(E1')$, $(E2)$ and $(E3')$. For $(E1')$, fix a $p\in\oneto{kn}$, then for any $q\in\oneto{kn}$ there are at most $4k$ pairs $(r,s)$ in $\oneto{n}^2$ equivalent to $(p,q)$. Since  $4k^2n=O(((k+1)/2)n)^{1.9})$, $(E1')$ is satisfied. For $(E2)$, fix $p,q,r\in\oneto{kn}$. Then there are at most two $s\in\oneto{kn}$ so that $(p,q)$ is equivalent to $(r,s)$. Hence, $(E2)$ is satisfied with $B=2$. For $(E3')$, we note that if $(p,q)$ is fixed, then there is no $r\neq q$ so that $(p,q)$ is equivalent to $(q,r)$, so that $(E3')$ is satisfied.

\noindent
\emph{Homogeneous block model.} We assume that the block matrix is made up of $k\times k$ blocks, where the diagonal blocks are the same, and also the non-diagonal blocks are the same (up to transposition).
\[
M^{(k)}_n \defeq \frac{1}{\sqrt{kn}} 
\begin{pmatrix}
A_n & B_n & B_n & \cdots & B_n\\
B_n^T & A_n & B_n & \cdots & B_n\\
B_n^T & B_n^T & A_n & \cdots & B_n\\
\vdots & \vdots & \vdots & \ddots & \vdots\\
B_n^T & B_n^T & B_n^T &\cdots & A_n
\end{pmatrix},
\]
where $A_n$ is symmetric. We assume that the family
\begin{equation}
\label{eq:Mfamily}
\{A_n(i,j)\,|\, 1\leq i\leq j\leq n\}\} \cup \{B_n(P)\,|\, P\in\oneto{n}^2\}
\end{equation}
is approximately uncorrelated. We check if $\sim_{kn}$ induced by the structure of $M^{(k)}_n$ satisfies the conditions $(E1)$, $(E2)$ and $(E3)$. For $(E1)$, fix a $p\in\oneto{kn}$, then for any $q\in\oneto{kn}$ there are at most $4k^2$ pairs $(r,s)$ in $\oneto{n}^2$ equivalent to $(p,q)$. Since  $4k^2n=O((kn)^{1.9})$, even $(E1')$ is satisfied. For $(E2)$, fix $p,q,r\in\oneto{kn}$. Then are at most $2(k-1)$ elements $s\in\oneto{kn}$ so that $(p,q)$ is equivalent to $(r,s)$. Hence, $(E2)$ is satisfied with $B=2(k-1)$. For $(E3')$, we note that if $(p,q)$ is fixed such that it falls within a block $B_n$ or $B_n^T$ (for which we have $(nk)^2-kn^2$ choices), then for $k\geq 3$ there are exactly $k-2$ elements $r\neq q$ so that $(p,q)$ is equivalent to $(q,r)$, so that $(E3)$ is not satisfied for $k\geq 3$, since then $((nk)^2-kn^2)(k-2)\neq o((kn)^2)$. So Theorem~\ref{thm:weightedmatrix} is not applicable for $k\geq 3$. $(E3')$ is, however, satisfied for $k\in\{1,2\}$, so that the almost sure semicircle holds in that case. A simulation of $M_n^{(k)}$ for $k=2,3,4$, $n=1000$ and an i.i.d.\ Rademacher family \eqref{eq:Mfamily} indicates that indeed, the SCL is likely to fail for $k\geq 3$, see Figure~\ref{fig:HomBand}.

\begin{figure}
    {\centering
    \begin{minipage}[t]{0.3\linewidth}
        \centering
        \includegraphics[clip=true, trim=1cm 0cm 0cm 0cm, width=\linewidth]{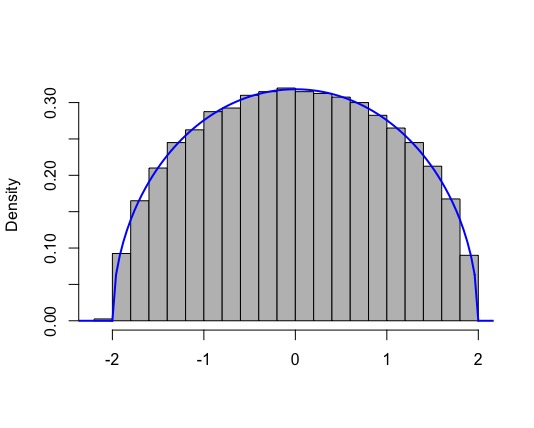}
    \end{minipage}
    \hfill
    \begin{minipage}[t]{0.3\linewidth}
        \centering
        \includegraphics[clip=true, trim=1cm 0cm 0cm 0cm, width=\linewidth]{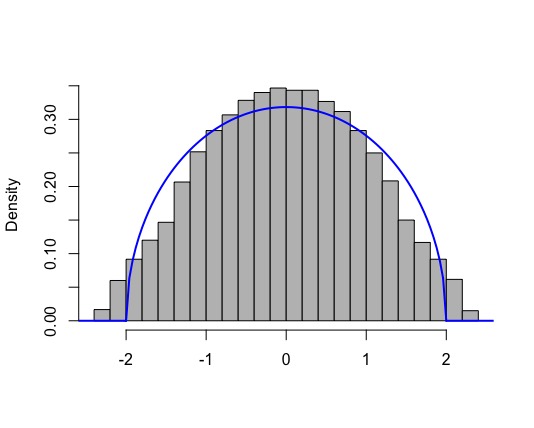}
    \end{minipage}
    \hfill
    \begin{minipage}[t]{0.3\linewidth}
        \centering
        \includegraphics[clip=true, trim=1cm 0cm 0cm 0cm, width=\linewidth]{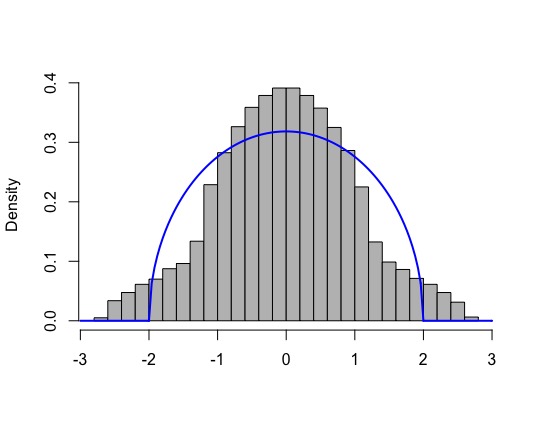}
    \end{minipage}}
    \caption{Histograms (in grey) of simulated spectra of $M_n^{(k)}$ for $n=1000$ and $k=2,3,4$ and i.i.d.\ Rademacher family \eqref{eq:Mfamily}, density (in blue) of the semicircle distribution.}
\label{fig:HomBand}
\end{figure}

\subsection{Proof of Theorem~\ref{thm:weightedmatrix}}
In Section~\ref{sec:momentconvergence} we derive the convergence of the moments of the ESDs studied in Theorem~\ref{thm:weightedmatrix}. In Section~\ref{sec:limitingmoments} we analyze the limiting moments, argue that these uniquely determine the limiting distribution which has to have compact support and be symmetric, and derive the weak convergence statements of Theorem~\ref{thm:weightedmatrix}. Section~\ref{sec:characterization} is devoted to the last statement of Theorem~\ref{thm:weightedmatrix}, the characterization of the weak limit being the semicircle distribution.

\subsubsection{Convergence of Moments}
\label{sec:momentconvergence}
 In this subsection, we will prove the weak convergence statements in Theorem~\ref{thm:weightedmatrix}. To this end, we need to develop some notation and combinatorics.
For every tuple $\ubar{t}\in\oneto{n}^k$, denote by $\pi_{
\ubar{t}}$ the partition on $\oneto{k}$ with
\[
i\sim_{\pi_{\ubar{t}}} j \ :\Leftrightarrow \ e_i \sim_n e_j \quad \left(\Leftrightarrow (t_i,t_{i+1}) \sim_n (t_j,t_{j+1})\right),
\]
where $k+1$ is identified with $1$ and we recall the interpretation of $\ubar{t}$ as a graph as in the description right below \eqref{eq:sum2}, so that $e_1,\ldots,e_k$ are the edges of $\ubar{t}$.
Also, note that $\pi_{\ubar{t}}$ depends on $n$, but we will suppress this dependence in the notation. Now if $\pi$ is any partition on $\oneto{k}$, then define
\[
T_{n,k}(\pi) \defeq \{\ubar{t}\in\oneto{n}^k\,|\, \pi_{\ubar{t}}=\pi\}.
\]
Then 
\[
\oneto{n}^k = \dot{\cup}_{\pi\in\Pcal(k)} T_{n,k}(\pi),
\]
where $\Pcal(k)$ denote the set of partitions of the set $\oneto{k}$.
Further, define for all $\ubar{t}\in\oneto{n}^k$:
\begin{align*}
a_n(\ubar{t}) &\defeq a_n(t_1,t_2)a_n(t_2,t_3)\cdots a_n(t_k,t_1) = \prod_{j=1}^k a_n(t_j,t_{j+1}),\\
w(\ubar{t}) &\defeq w\left(\frac{\abs{t_1-t_2}}{n}\right)w\left(\frac{\abs{t_2-t_3}}{n}\right)\cdots w\left(\frac{\abs{t_k-t_1}}{n}\right) = \prod_{j=1}^kw\left(\frac{\abs{t_j-t_{j+1}}}{n}\right).
\end{align*}
Then
\begin{equation}
\label{eq:momenttosum1}
\integrala{\sigma_n}{x^k} = \frac{1}{n^{\frac{k}{2}+1}} \sum_{\ubar{t}\in\oneto{n}^k}w(\ubar{t})a_n(\ubar{t}) = \frac{1}{n^{\frac{k}{2}+1}}\sum_{\pi\in\Pcal(k)}\sum_{\ubar{t}\in T_{n,k}(\pi)}w(\ubar{t})a_n(\ubar{t}).	
\end{equation}

\begin{lemma}
\label{lem:tuplescount}	
Let $\pi\in\Pcal(k)$ be a partition with $\#\pi = r$ and $n\geq B$, then we have
\[
\# T_{n,k}(\pi) \leq n^{r+1}B^{k-r-1}.
\]
Further, if $\pi$ contains a singleton, we have 
\[
\# T_{n,k}(\pi) \leq  n^{r}B^{k-r}.
\]
\end{lemma}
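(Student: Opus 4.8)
The plan is to bound the number of ways to construct a tuple $\ubar{t}\in T_{n,k}(\pi)$ by walking along the cycle $t_1,t_2,\ldots,t_k,t_{k+1}=t_1$ and counting, at each step, the number of admissible choices for the next vertex, while carefully tracking how the equivalence relation $\sim_n$ (through the constraint $\pi_{\ubar{t}}=\pi$) restricts those choices. Since $\pi$ has $r$ blocks, the edges $e_1,\ldots,e_k$ fall into $r$ equivalence classes. The key structural input is condition $(E2)$: once the pair $(t_p,t_{p+1})$ underlying an edge $e_i$ is fixed, and we later reach an edge $e_j$ with $e_j\sim_{\pi_{\ubar{t}}} e_i$ whose left endpoint $t_j$ is already determined, there are at most $B$ possible values of $t_{j+1}$ making $(t_j,t_{j+1})\sim_n(t_p,t_{p+1})$.

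First I would set up the walk: choose $t_1$ freely ($n$ choices). Then for $\ell=1,\ldots,k$, having built $t_1,\ldots,t_\ell$, we choose $t_{\ell+1}$ and classify the step according to the block of $\pi$ containing $\ell$ (i.e.\ the edge $e_\ell$). Call step $\ell$ a \emph{free step} if $e_\ell$ is the first edge (in the order $1,2,\ldots,k$) of its $\pi$-block, and a \emph{forced step} otherwise. On a forced step, $e_\ell$ is $\sim_n$-equivalent to some earlier edge $e_i$ ($i<\ell$) whose endpoint pair is already fixed, and $t_\ell$ is already determined, so by $(E2)$ there are at most $B$ choices for $t_{\ell+1}$. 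On a free step we use the trivial bound of $n$ choices. There are exactly $r$ free steps (one per block) among the $k$ steps $\ell=1,\ldots,k$, hence $k-r$ forced steps; but the free step corresponding to $e_k$ chooses $t_{k+1}=t_1$, which is not free — so I must be slightly careful. The cleanest bookkeeping: the edge $e_k$ closes the cycle, and its block's ``first edge'' may or may not be $e_k$ itself. If $e_k$ is the first edge of its block then the last step is free-but-already-determined (no new choice, and no gain), contributing a factor $1$ rather than $n$; otherwise $e_k$'s step is forced and already counted with factor $B$, and closing the cycle imposes the further constraint $t_{k+1}=t_1$ which only helps. Either way, tallying: at most $r$ genuine factors of $n$ including the choice of $t_1$ — wait, $t_1$ plus $r$ free steps would be $n^{r+1}$ only if the $e_k$-step is genuinely free and new. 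Since the $e_k$-step never produces a new free choice (it must equal $t_1$), we get at most $n^{r+1}$ total: $n$ for $t_1$, and $n$ for each of the free steps among $e_1,\ldots,e_{k-1}$, which number at most $r$ if $e_k$'s block has its first edge before $e_k$, or at most $r-1$ if $e_k$ is itself the first edge of its block (but then we ``spent'' a block on a step giving factor $1$). In all cases the count of factors of $n$ beyond the initial one is at most $r$, giving $n^{r+1}$, and the remaining $k-1-r$ or $k-r$ steps each give a factor $B$; the coarse bound $n^{r+1}B^{k-r-1}$ follows (absorbing the $e_k$-step's factor, which is $1$ or $B\le B$, appropriately).

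For the second bound, suppose $\pi$ has a singleton block $\{i\}$, meaning the edge $e_i$ is $\sim_n$-inequivalent to every other edge. Then at step $i$ (a free step) the vertex $t_{i+1}$ must be chosen so that $(t_i,t_{i+1})$ is \emph{not} equivalent to any $(t_j,t_{j+1})$, $j\ne i$ — but more importantly, I claim we may instead save a factor of $n$ by reorganizing the walk to \emph{start} the cyclic traversal at $t_i$: then the edge $e_i$ is the last edge traversed, and the constraint $t_{i+1}=$ (the starting vertex) removes the free choice at that step entirely, turning it into a factor $1$. Concretely, by cyclic relabeling we may assume the singleton is the edge $e_k$; then as noted above, the $e_k$-step contributes factor $1$, and the walk has at most $r-1$ free steps among $e_1,\ldots,e_{k-1}$ plus the initial choice of $t_1$, giving $n^{r}$, with the remaining $k-r$ steps each contributing at most $B$, i.e.\ $n^r B^{k-r}$.

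The main obstacle — and the place where care is genuinely needed — is the cycle-closure bookkeeping: making precise that the step producing $t_{k+1}$ never contributes a \emph{new} factor of $n$ (because it must equal $t_1$), and that when $\pi$ has a singleton we can always rotate it into the position $e_k$ without disturbing the block structure, so that this ``free'' closure step coincides with the singleton and we lose one factor of $n$ relative to the generic bound. A secondary subtlety is checking that ``forced'' steps really do invoke $(E2)$ correctly: one must verify that when we reach a forced edge $e_\ell$, some equivalent earlier edge has \emph{both} endpoints already assigned (this holds because we process edges in index order and the earlier edge $e_i$ with $i<\ell$ has endpoints $t_i,t_{i+1}$ with $i+1\le \ell$), so that $(E2)$ — which fixes three coordinates and bounds the fourth — is applicable. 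I expect the combinatorial walk itself to be routine once the free/forced dichotomy and the cyclic normalization are pinned down.
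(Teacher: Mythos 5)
Your proof is correct and follows essentially the same route as the paper: a sequential edge-by-edge construction in which the first occurrence of each block costs a factor $n$ and each repeated occurrence costs a factor $B$ via $(E2)$, with the singleton case handled by arranging the walk so that the singleton edge is the cycle-closing step and hence contributes no new choice (the paper starts the construction at $e_{i^*+1}$, which is your cyclic rotation in disguise). The only difference is that the paper cites Schenker--Schulz-Baldes for the first bound, whereas you prove it directly by the same counting argument.
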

\begin{proof}
The first statement was proved in \cite[p.4]{Schenker:Schulz-Baldes:2005}. The second statement is clear for $k=1$. If $k\geq 2$, write $\pi=\{B_1,\ldots,B_r\}$ in increasing order, that is, $1\in B_1$, $\min\{1,\ldots,k\}\backslash B_1 \in B_2$, and so forth. Now for some $s\in\oneto{r}$, $\# B_s =1$, so $B_s=\{i^*\}$ for some $i^*\in\oneto{k}$. We begin the construction of $\ubar{t}$ at the edge $e_{i^*+1} = (t_{i^*+1},t_{i^*+2})$, for which we have $n^2 $ choices. For each $\ell\in\{i^*+2,\ldots,i^*-1\}$ we have to choose the destination of edge $e_{\ell}$, which is $t_{\ell+1}$ (note that the destination of edge $e_{i^*}$ has already been picked). Each time $\ell$ enters a block of $\pi$ which has not been visited before, we have at most $n$ choices for $t_{\ell+1}$, and this happens exactly $r-2$ times. If $\ell$ does not enter a new block, then $e_{\ell}$ is $\pi$-equivalent to some $e_{\ell'}$ with $\ell'\in\{i^*+1,\ldots,\ell-1\}$, leaving at most $B$ choices for $t_{\ell+1}$ due to condition $(E2)$, which happens exactly $k-2-(r-2)$ times, yielding at most $n^2\cdot n^{r-2}B^{k-r}$ choices.
\end{proof}
Next, we analyze the following situation: Assume $\ubar{t}$ and $\ubar{t}'\in T_{n,k}(\pi)$ are two tuples. Then we know that \emph{separately}, the edges $e_1,\ldots, e_k$ and $e'_1,\ldots,e'_k$ are compatible with $\pi$. But what can be said about the partition $\hat{\pi}\in\Pcal(2k)$ that describes the $\sim_n$ relations of the entirety $e_1,\ldots,e_k,e'_1,\ldots,e'_k$ (where $e_k=(t_k,t_1)$ and $e'_k =(t'_k,t'_1)$)? Surely, restricting $\hat{\pi}$ to the lower and upper half of $\oneto{2k}$  by setting 
\[
\hat{\pi}_{\leq k} \defeq \{B\cap\{1,\ldots,k\},B\in\hat{\pi}\} \qquad \text{and}\qquad \hat{\pi}_{> k} \defeq \{B\cap\{k+1,\ldots,2k\},B\in\hat{\pi}\},
\]
then $\hat{\pi}_{\leq k} = \pi = \hat{\pi}_{> k}$, where $\{k+1,\ldots,2k\}$ is identified with $\oneto{k}$. In this situation, that is, we are given a $\pi\in\Pcal(k)$ and a $\hat{\pi}\in\Pcal(2k)$ with $\hat{\pi}_{\leq k} = \pi = \hat{\pi}_{> k}$, we say that $\hat{\pi}\in\Pcal(2k)$ is a \emph{square partition} of $\pi$. Note that in general, there are many possible square partitions to a given partition. We say that a square partition $\hat{\pi}$ is \emph{traversing}, if there is a block $B\in\hat{\pi}$ with $B\cap\{1,\ldots,k\}\neq\emptyset$ and $B\cap\{k+1,\ldots,2k\}\neq\emptyset$. Otherwise, we call $\hat{\pi}$ \emph{non-traversing}. We are now interested in bounds of the cardinality of the following sets:
\[
\hat{T}_{n,2k}(\hat{\pi})\defeq\left\{(\ubar{t},\ubar{t}') \in (T_{n,k}(\pi))^2, (e_1,\ldots,e_k,e'_1,\ldots,e'_k)\sim_n\hat{\pi} \right\}.
\]
Our bounds will depend on the number of unifications of blocks in $\hat{\pi}_{\leq k}$ with blocks in $\hat{\pi}_{> k}$ carried out in $\hat{\pi}$. Note that there may be at most $\#\pi$ unifications.

\begin{lemma}
\label{lem:tuplescount2}	
Let $\hat{\pi}\in\Pcal(2k)$ be a square partition of $\pi\in\Pcal(k)$ with $\#\pi=r$. 
\begin{enumerate}
\item Let $\hat{\pi}$ be non-traversing. Then $\#\hat{\pi}=2r$, and for all $n\geq B$
\[
\# \hat{T}_{n,2k}(\hat{\pi}) \leq n^{2r+2}B^{2k-2r-2}.
\]
Further, if $\hat{\pi}$ contains a singleton, we have 
\[
\# \hat{T}_{n,2k}(\hat{\pi}) \leq  n^{2r}B^{2k-2r}.
\]
\item If $k\geq 2$, $\hat{\pi}$ is traversing, and $1\leq u\leq r$ is the number of unifications, then $\#\hat{\pi}=2r-u $ and for $n\geq B$ we have
\[
\# \hat{T}_{n,2k}(\hat{\pi}) \leq n^{2r-u}o(n^2)B^{2k-2r+u-3}.
\]
Further, if $s_1(\pi)\geq 2$, that is, $\pi$ contains at least two singletons, it holds 
\[
\# \hat{T}_{n,2k}(\hat{\pi}) \leq n^{2r-u-1}o(n^2)B^{2k-2r+u-2}.
\]
In addition, the terms $o(n^2)$ may be replaced by $O(n^{2-\delta})$ if $(E1')$ is assumed instead of $(E1)$.
\end{enumerate}
\end{lemma}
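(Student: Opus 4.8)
\emph{Overall strategy.} In both cases the plan is to bound $\#\hat{T}_{n,2k}(\hat{\pi})$ by constructing an admissible pair $(\ubar{t},\ubar{t}')$ one vertex at a time: each newly fixed coordinate is charged a factor $n$ if it is a genuinely free vertex, a factor $B$ if it is pinned via $(E2)$ by an already-built $\sim_n$-equivalent edge, and --- at most once --- the quantity $\max_{p}\#\{(q,r,s)\in\oneto{n}^3:(p,q)\sim(r,s)\}$ from $(E1)$ (or $O(n^{2-\delta})$ from $(E1')$); any part of a walk free of cross-constraints is treated by quoting Lemma~\ref{lem:tuplescount} directly. It is useful to note that, $\hat{\pi}$ being a square partition, its restriction to each of the two halves is $\pi$ itself, so a block of $\hat{\pi}$ meets $\{1,\ldots,k\}$ in at most one block of $\pi$ and meets $\{k+1,\ldots,2k\}$ in at most one block of $\pi$.

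\emph{Non-traversing case.} If no block of $\hat{\pi}$ crosses both halves, the blocks of $\hat{\pi}$ are exactly the $r$ lower and the $r$ upper blocks of $\pi$, so $\#\hat{\pi}=2r$. Since $\hat{T}_{n,2k}(\hat{\pi})\subseteq(T_{n,k}(\pi))^2$ by definition, the bound $n^{2r+2}B^{2k-2r-2}$ is immediate from $\#T_{n,k}(\pi)\le n^{r+1}B^{k-r-1}$. If $\hat{\pi}$ has a singleton it sits in one half and is therefore a singleton of $\pi$, so the refined estimate $\#T_{n,k}(\pi)\le n^{r}B^{k-r}$ applies to each factor, giving $n^{2r}B^{2k-2r}$.

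\emph{Traversing case.} Every traversing block of $\hat{\pi}$ is the union of exactly one lower and one upper block of $\pi$, so the $u$ unifications are in bijection with the traversing blocks, and counting the $2(r-u)$ non-traversing blocks plus the $u$ traversing ones gives $\#\hat{\pi}=2r-u$; traversing means $u\ge1$, and clearly $u\le r$. I would bound $\#\hat{T}_{n,2k}(\hat{\pi})$ by building $(\ubar{t},\ubar{t}')$ in a single interleaved tour. Fix a traversing block, with lower edge $e_i=(t_i,t_{i+1})$ and upper edge $e'_1=(t'_1,t'_2)$, and arrange the tour of $\ubar{t}'$ to start at $e'_1$. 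Build $\ubar{t}$ up to the vertex $t_i$ in the usual way (a factor $n$ for each fresh $\pi$-block and a factor $B$ via $(E2)$ for each repeat), but \emph{defer} the vertex $t_{i+1}$; then determine the triple $(t_{i+1},t'_1,t'_2)$ at once, using $(t_i,t_{i+1})\sim(t'_1,t'_2)$ and $(E1)$ with $p:=t_i$, for at most $o(n^2)$ choices. This deferral is the crux: the three vertices $t_{i+1},t'_1,t'_2$, which would otherwise be charged roughly $n^2B$, are now produced by one $(E1)$-step of size $o(n^2)$, which is exactly the gap between the naive and the claimed bound. Finishing $\ubar{t}'$ from $t'_2$ and then $\ubar{t}$ from $t_{i+1}$, each of the $r-u$ fresh non-traversing blocks costs $n$, every other edge costs $B$ via $(E2)$ (its block being already seen), and the two closure relations $t'_{k+1}=t'_1$, $t_{k+1}=t_1$ each absorb a factor; collecting these gives the claimed bound, with $o(n^2)$ replaced by $O(n^{2-\delta})$ under $(E1')$.

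\emph{The $s_1(\pi)\ge2$ refinement and the main obstacle.} When $\pi$ has two singletons, one is used to sharpen the $\ubar{t}$-count via the singleton case of Lemma~\ref{lem:tuplescount} and the other inside the $\ubar{t}'$-part of the tour, each trading a factor $n$ for a factor $B$ exactly as in that lemma; this yields the extra $n^{-1}B$ of the sharper estimate. The main obstacle is making the interleaved construction of the traversing case precise: one must verify that exactly $r-u$ free vertex-steps survive once $t_{i+1}$ is deferred, that the single invocation of $(E1)$ is legitimate (the edge $e_i$ is already fixed, and no other edge of $\ubar{t}'$ needs this budget), and that the closure savings and the singleton savings are never double-counted, all uniformly over the square partitions $\hat{\pi}$. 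Once this accounting is settled, the rest is a routine application of $(E2)$ and Lemma~\ref{lem:tuplescount}.
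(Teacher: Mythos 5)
Part (1) and the first bound of part (2) of your proposal follow the paper's proof essentially step for step: part (1) is the inclusion $\hat{T}_{n,2k}(\hat{\pi})\subseteq (T_{n,k}(\pi))^2$ combined with the two bounds of Lemma~\ref{lem:tuplescount}, and in the traversing case the paper likewise spends a single application of $(E1)$ (at cost $n\cdot o(n^2)$, resp.\ $n\cdot O(n^{2-\delta})$ under $(E1')$ --- your attribution to $(E1)$ rather than $(E3)$ is the one consistent with the statement of the lemma) on the four vertices $t_\ell,t_{\ell+1},t'_{\ell'},t'_{\ell'+1}$ of the first traversing pair of edges, and then charges $n$ per fresh block, $B$ per $(E2)$-pinned edge and $B^{u-1}$ for the $u-1$ remaining unified blocks of the upper half; your tally of $1+(r-1)+(r-u)=2r-u$ powers of $n$ and $2(k-r-1)+(u-1)=2k-2r+u-3$ powers of $B$ reproduces the claimed exponents. (Minor point: you should take $e_i$ to be the \emph{first} edge of its $\pi$-block in the walk order, e.g.\ the minimal traversing index as in the paper, so that the factor $n$ you save by deferring $t_{i+1}$ is genuinely a fresh-block factor.)

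The $s_1(\pi)\ge 2$ refinement is where there is a genuine gap. Your accounting is internally inconsistent: you describe two separate savings, one singleton used in $\ubar{t}$ and one in $\ubar{t}'$, each trading a factor $n$ for a factor $B$, yet the asserted improvement is a single factor $n^{-1}B$; two genuine trades would yield $n^{2r-u-2}o(n^2)B^{2k-2r+u-1}$, which you neither claim nor justify. The proposed saving inside $\ubar{t}'$ is in any case doubtful: the upper-half copy of a singleton block may be one of the $u$ unified blocks, in which case its edge is already charged only $B$ via $(E2)$ and yields nothing further, and the singleton device of Lemma~\ref{lem:tuplescount} requires anchoring the walk next to the singleton, whereas the walk of $\ubar{t}'$ is already forced to start at the traversing edge whose endpoints came out of the $(E1)$ step. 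Moreover, the hypothesis $s_1(\pi)\ge 2$ is not there to provide one singleton per half --- every singleton of $\pi$ is mirrored in both halves since $\hat{\pi}_{\le k}=\pi=\hat{\pi}_{>k}$ --- but to guarantee a singleton $e_m$ with $m\neq\ell$, i.e.\ distinct from the traversing edge at which the construction of $\ubar{t}$ is anchored. The paper then extracts exactly one saving, in $\ubar{t}$ alone, via a bidirectional walk: from $e_\ell$ forward until reaching $t_m$ and from $e_\ell$ backward until reaching $t_{m+1}$, so that both endpoints of $e_m$ are determined by its neighbouring edges and only $r-2$ fresh blocks are entered, replacing $n^{r-1}B^{k-r-1}$ by $n^{r-2}B^{k-r}$. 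This bidirectional walk is precisely the coordination you flag as the main obstacle but do not resolve; without it you cannot simultaneously anchor the walk of $\ubar{t}$ at the traversing edge and invoke the singleton case of Lemma~\ref{lem:tuplescount}.
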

\begin{proof}
The statements in $(1)$ follow directly from Lemma~\ref{lem:tuplescount} and the inequality
\[
\# \hat{T}_{n,2k}(\hat{\pi}) \leq \# T_{n,k}(\pi)\cdot\# T_{n,k}(\pi),
\]
which is a trivial upper bound, since for any pair of tuples $(\ubar{t},\ubar{t}')\in \hat{T}_{n,2k}(\hat{\pi})$ we have $\ubar{t}$, $\ubar{t}'\in T_{n,k}(\pi)$.
For the statements in $(2)$ we analyze how many possibilities we have to construct such a tuple pair. For the first statement we begin with the smallest $\ell\in\oneto{k}$ which is $\hat{\pi}$-equivalent to some $\ell'\in\{k+1,\ldots,2k\}$. To fix $t_{\ell},t_{\ell+1},t_{\ell'}',t_{\ell'+1}'$, we have $n\cdot o(n^2)$ possibilities by condition $(E3)$. Continuing through $\ubar{t}$, we have at most $n^{r-1}B^{k-r-1}$ possibilities. When continuing through the edges of $\ubar{t}'$, there are $r-1$ new blocks to be entered, $u-1$ of which will be connected with some block in $\hat{\pi}_{\leq k}$, which will then admit $B^{u-1}$ choices by condition $(E2)$. The remaining $r-u$ independent new blocks will admit at most $n^{r-u}$ choices. The remaining edges will admit at most $B^{k-r-1}$ choices by condition $(E2)$. This yields at most
\[
n\cdot o(n^2)\cdot n^{r-1}B^{k-r-1} \cdot B^{u-1} \cdot n^{r-u} \cdot B^{k-r-1} = n^{2r-u}\cdot o(n^2)\cdot B^{2k-2r+u-3}
\]
choices to construct a pair of tuples $(\ubar{t},\ubar{t}')\in \hat{T}_{n,2k}(\hat{\pi})$.

If $\pi$ contains at least two singletons, we proceed similarly. Starting at the first $\ell\in\oneto{k}$ which is $\hat{\pi}$-equivalent to some $\ell'\in\{k+1,\ldots,2k\}$, we again fix $t_{\ell},t_{\ell+1},t_{\ell'}',t_{\ell'+1}'$ for which we have at most $n\cdot o(n^2)$ possibilities by condition $(E3)$. To complete the tuple $\ubar{t}$ we pick an $m\in\oneto{k}\backslash\{\ell\}$ such that $e_m$ is a single edge and continue exactly as in the case above except that we do not cycicly walk through the remaining edges of $\ubar{t}$, but from $e_{\ell}$ forward until reaching the node $t_m$, and then from $e_{\ell}$ backwards until reaching the node $t_{m+1}$. We have then completely determined the tuple while only entering $r-2$ new blocks, leading to at most $n^{r-2}B^{k-r}$ possibilities. To complete $\ubar{t}'$, we proceed exactly as in the proof of the first statement in $(2)$, yielding a total of at most
\[
n\cdot o(n^2)\cdot n^{r-2}B^{k-r} \cdot B^{u-1} \cdot n^{r-u} \cdot B^{k-r-1} = n^{2r-u-1}\cdot o(n^2)\cdot B^{2k-2r+u-2}
\]
possibilities to construct the pair $(\ubar{t},\ubar{t}')\in \hat{T}_{n,2k}(\hat{\pi})$.
\end{proof}

We proceed to analyze the sum in \eqref{eq:momenttosum1}. Note that for each summand,
\begin{equation}
\label{eq:universalbound}
\abs{\E w(\ubar{t})a_n(\ubar{t})} \leq W^kC(k),	
\end{equation}
which we call \emph{universal bound}. It will be used frequently throughout the text. For a partition $\pi\in\Pcal(k)$ and $\ell\in\oneto{k}$, denote by $s_{\ell}(\pi)$ the number of blocks of size $\ell$ in $\pi$, so 
\[
s_{\ell}(\pi)\defeq \#\{B\in\pi\,|\, \abs{B}=\ell\},
\]
so that
\begin{equation}
\label{eq:partitionsum}
k = \sum_{\ell=1}^k s_{\ell}(\pi)\cdot\ell \qquad \text{and} \qquad \#\pi =\sum_{\ell=1}^k s_{\ell}(\pi).
\end{equation}
The equalities in \eqref{eq:partitionsum} yield some useful inequalities. For example, for any $\pi\in\Pcal(k)$,
\begin{equation}
\label{eq:singlesboundblocks}	
\#\pi\leq s_1(\pi) + \frac{k-s_1(\pi)}{2}.
\end{equation}

We will next analyze the sum in \eqref{eq:momenttosum1}. We would like to see this $k$-th empirical moment converge to limit moments in expectation, in probability or almost surely.

We now proceed to analyze the convergence in \eqref{eq:momenttosum1}.

\begin{lemma}
\label{lem:onlykhalfblocks}
Let $\pi\in\Pcal(k)$ with $\#\pi=:r\neq k/2$, then
\begin{equation}
\label{eq:rnotkhalf}
\frac{1}{n^{\frac{k}{2}+1}}\sum_{\ubar{t}\in T_{n,k}(\pi)}w(\ubar{t})a_n(\ubar{t}) \quad \xrightarrow[n\to\infty]{}\quad 0
\end{equation}
in expectation and in probability, and the statement holds also almost surely if $(E1')$ is assumed instead of $(E1)$.
\end{lemma}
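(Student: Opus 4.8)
The plan is to control the single summand $D_n \defeq \frac{1}{n^{k/2+1}}\sum_{\ubar{t}\in T_{n,k}(\pi)}w(\ubar{t})a_n(\ubar{t})$ of \eqref{eq:momenttosum1} in the three standard stages (expectation, second moment, and — under $(E1')$ — a high even central moment), splitting throughout into the two regimes $r<k/2$ and $r>k/2$. The organizing observation is that if $\{i\}$ is a singleton block of $\pi_{\ubar{t}}$, then the edge $e_i$ of $\ubar{t}$ has an index pair which is $\sim_n$-inequivalent to, hence distinct (even up to flipping) from, every other edge pair of $\ubar{t}$; grouping the remaining edges by their $\pi_{\ubar{t}}$-blocks and applying \eqref{eq:A1} with $s=s_1(\pi)$ gives $\abs{\E a_n(\ubar{t})}\le C(k)\,n^{-s_1(\pi)/2}$. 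When $r>k/2$, inequality \eqref{eq:singlesboundblocks} forces $s_1(\pi)\ge 2r-k\ge 1$, so this decay is genuinely present; when $r<k/2$ I would instead lean on $T_{n,k}(\pi)$ being small. For the expectation: if $r<k/2$, combine the universal bound \eqref{eq:universalbound} with $\#T_{n,k}(\pi)\le n^{r+1}B^{k-r-1}$ from Lemma~\ref{lem:tuplescount} to get $\abs{\E D_n}\le W^kC(k)B^{k-r-1}n^{r-k/2}\to 0$; if $r>k/2$, use the singleton count $\#T_{n,k}(\pi)\le n^{r}B^{k-r}$ together with the decay above, giving $\abs{\E D_n}\le W^kC(k)B^{k-r}n^{\,r-k/2-1-s_1(\pi)/2}$, whose exponent is $\le -1$ by $s_1(\pi)\ge 2r-k$.

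For convergence in probability I would expand $\E D_n^2=\frac{1}{n^{k+2}}\sum_{(\ubar{t},\ubar{t}')}w(\ubar{t})w(\ubar{t}')\,\E[a_n(\ubar{t})a_n(\ubar{t}')]$ and break the sum over the finitely many square partitions $\hat\pi\in\Pcal(2k)$ of $\pi$. For non-traversing $\hat\pi$ the $2k$ edges contribute $2s_1(\pi)$ singleton classes, so \eqref{eq:A1} supplies a factor $n^{-s_1(\pi)}$; with the counts of Lemma~\ref{lem:tuplescount2}(1) the corresponding term is $O(n^{2r-k})$ when $s_1(\pi)=0$ (which forces $r<k/2$) and $O(n^{-2})$ when $s_1(\pi)\ge 1$ (using $s_1(\pi)\ge 2r-k$). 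For traversing $\hat\pi$ with $u\ge 1$ unifications, Lemma~\ref{lem:tuplescount2}(2) gives $\#\hat{T}_{n,2k}(\hat\pi)\le n^{2r-u}o(n^2)B^{O(1)}$; since each unification can destroy at most two of the $2s_1(\pi)$ singletons, $\hat\pi$ retains at least $\max(2s_1(\pi)-2u,0)$ singletons and \eqref{eq:A1} contributes $n^{-\max(s_1(\pi)-u,0)}$, so the term is $o\big(n^{\,2r-u-k-\max(s_1(\pi)-u,0)}\big)$; this exponent is $\le -1$ whenever $r<k/2$ or $u>s_1(\pi)$, and $\le 0$ otherwise (again via $s_1(\pi)\ge 2r-k$, with equality only possible when $s_1(\pi)=2r-k$). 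Summing the finitely many pieces gives $\E D_n^2\to 0$, hence \eqref{eq:rnotkhalf} in probability (and, by Cauchy--Schwarz, again in expectation).

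For the almost sure statement under $(E1')$, every $o(n^2)$ above becomes $O(n^{2-\delta})$, so all pieces of $\E D_n^2$ are summable in $n$ except the traversing ones with $r>k/2$, $s_1(\pi)=2r-k$ and $1\le u\le s_1(\pi)$, which are only $O(n^{-\delta})$. I would remedy this by passing to a high even central moment: expanding $\E(D_n-\E D_n)^z$ over hyper-partitions of $\oneto{zk}$, centering annihilates every configuration in which some $\ubar{t}^{(j)}$ shares no $\sim_n$-class with the others, so the surviving configurations split the $z$ tuples into connected components of size $\ge 2$, i.e.\ into at least $z/2$ components; running the count of Lemma~\ref{lem:tuplescount2}(2) component by component, each component beyond its first tuple donates an extra $O(n^{2-\delta})$ in place of $n^2$, and carrying the singleton bookkeeping along as before yields $\E(D_n-\E D_n)^z=O(n^{-c(z)\delta})$ with $c(z)\ge z/2$; taking $z$ even with $z>2/\delta$ makes this summable, and Markov's inequality with Borel--Cantelli delivers \eqref{eq:rnotkhalf} almost surely. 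The step I expect to be the main obstacle is precisely this exponent bookkeeping in the last two stages: one must balance the gain from unifications (fewer free vertices, hence a smaller count) against the loss of $(A1)$-decay each time a unification absorbs a singleton block, and verify the net power of $n$ is always $\le 0$ — and, for the almost sure statement, that the high-moment expansion contributes one fresh $O(n^{-\delta})$ per connected component so that the exponent can be pushed below $-1$.
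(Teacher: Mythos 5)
Your first two stages reproduce the paper's argument for convergence in expectation and in probability: the same split into $r<k/2$ and $r>k/2$, the same use of $s_1(\pi)\ge 2r-k$ from \eqref{eq:singlesboundblocks} together with \eqref{eq:A1}, and the same decomposition of the second moment over square partitions via Lemmas~\ref{lem:tuplescount} and~\ref{lem:tuplescount2}. That part is sound.

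The almost-sure stage has a genuine gap. Your high-moment argument rests on the claim that ``centering annihilates every configuration in which some $\ubar{t}^{(j)}$ shares no $\sim_n$-class with the others.'' That is the standard argument for \emph{independent} families and it fails here: entries from different equivalence classes are only approximately uncorrelated in the sense of \eqref{eq:A1}--\eqref{eq:A2}, not independent, so $\E\bigl[(a_n(\ubar{t}^{(j)})-\E a_n(\ubar{t}^{(j)}))\cdot(\text{rest})\bigr]$ need not vanish. Without that annihilation, your restriction to configurations with few connected components is unjustified, and so is the bound $O(n^{-c(z)\delta})$ with $c(z)\ge z/2$ (the ``at least $z/2$ components'' should in any case read ``at most''). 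A second, smaller gap: the non-traversing, singleton-free piece of your second moment is only $O(n^{2r-k})$, which for odd $k$ and $r=(k-1)/2$ is $O(1/n)$ and not summable, contradicting your claim that only certain traversing pieces obstruct summability. Both problems are avoidable without any high-moment connectivity analysis, which is how the paper proceeds: for $r<k/2$, almost sure convergence follows directly from the count $\#T_{n,k}(\pi)\le n^{r+1}B^{k-r-1}\le n^{k/2+1/2}B^{k/2-1/2}$ via the $p$-th moment Markov bound of Lemma~\ref{lem:breakaway}~b); for $r>k/2$ with $s_1(\pi)\ge 2$, the \emph{sharper} traversing count $\#\hat{T}_{n,2k}(\hat{\pi})\le n^{2r-u-1}o(n^2)B^{2k-2r+u-2}$ of Lemma~\ref{lem:tuplescount2}(2) --- which you did not invoke --- supplies an extra factor $n^{-1}$ and makes every variance piece $O(n^{-1-\delta})$ under the strengthened hypotheses; and in the remaining case $s_1(\pi)=1$, which forces $r=(k+1)/2$ and $\#T_{n,k}(\pi)\le n^{(k+1)/2}B^{(k-1)/2}$, a crude bound on the fourth central moment using only this count and the universal moment bound already gives $O(n^{-2})$, summable with no connectivity argument at all.
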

\begin{proof}
For $k=1$ the term in question is
\[
\frac{1}{n^{\frac{3}{2}}}\sum_{t\in\oneto{n}}w(0)a_n(t,t)
\]
which converges to zero in expectation and almost surely by Lemma~\ref{lem:breakaway} b). So for the remainder of the proof we assume that $k\geq 2$.

If $r<k/2$ we obtain with Lemma~\ref{lem:tuplescount} that $\#T_{n,k}(\pi)\leq n^{r+1}B^{k-r-1}\leq n^{k/2+1/2}B^{k/2-1/2}$, which leads to convergence in expectation and almost sure convergence to zero in \eqref{eq:rnotkhalf} by Lemma~\ref{lem:breakaway} b).

If $r>k/2$, then \eqref{eq:singlesboundblocks} yields $s_1(\pi)\geq 2r-k\geq 1$, so by \eqref{eq:A1},
\[
\abs{\E w(\ubar{t})a_n(\ubar{t})} \leq \frac{W^k C(k)}{n^{\frac{1}{2}(2r-k)}}
\]
and therefore by Lemma~\ref{lem:tuplescount},
\[
\bigabs{\frac{1}{n^{\frac{k}{2}+1}}\sum_{\ubar{t}\in T_{n,k}(\pi)}\E w(\ubar{t})a_n(\ubar{t})}\leq \frac{1}{n^{\frac{k}{2}+1}}n^rB^{k-r}\frac{W^kC(k)}{n^{r-\frac{k}{2}}} \leq \frac{W^kC(k)B^{k-r}}{n},
\]
so the sum in question converges to zero in expectation. We will now show that its variance decays to zero. Let $\hat{\pi}$ be a square partition of $\pi$, then it suffices to show
\begin{equation}
\label{eq:vartozero}	
\frac{1}{n^{k+2}}\sum_{(\ubar{t},\ubar{t}')\in \hat{T}_{n,2k}(\hat{\pi})}\abs{\E a_n(\ubar{t}) a_n(\ubar{t}')  -\E a_n(\ubar{t}) \E a_n(\ubar{t}')} \xrightarrow[n\to\infty]{} 0.
\end{equation}
If $\hat{\pi}$ is non-traversing, then $s_1(\hat{\pi}) = 2s_1(\pi)$, so  
\[
\abs{\E a_n(\ubar{t}) a_n(\ubar{t}')}\leq \frac{C(2k)}{n^{2r-k}} \quad\text{and}\quad \abs{\E a_n(\ubar{t})\E a_n(\ubar{t}')}\leq \frac{C(k)^2}{n^{2r-k}}.
\]
Further, $\#\hat{T}_{n,2k}(\hat{\pi})\leq n^{2r}B^{2k-2r}$ by Lemma~\ref{lem:tuplescount2},  so \eqref{eq:vartozero} holds summably fast. \newline
If $\hat{\pi}$ is traversing, then we distinguish two cases:\newline
\underline{Case 1: $s_1(\pi)\geq 2$.}\newline
There is a block in $\hat{\pi}$ which is the union of a block in $\hat{\pi}_{\leq k}$ and a block in $\hat{\pi}_{>k}$. Each union decreases $\#\hat{\pi}$ by $1$, and for each union at most $2$ singletons are paired. Let $u$ with $1\leq u\leq r$ be the number of unions, then $\#\hat{\pi} = 2r-u$ and $s_1(\hat{\pi})\geq \max(0,4r-2k-2u)$. Hence,
\[
\abs{\E a_n(\ubar{t}) a_n(\ubar{t}')}\leq \frac{C(2k)}{n^{\max(0,2r-k-u)}} \quad\text{and}\quad \abs{\E a_n(\ubar{t})\E a_n(\ubar{t}')}\leq \frac{C(k)^2}{n^{\max(0,2r-k-u)}}.
\]
By Lemma~\ref{lem:tuplescount2}, $\#\hat{T}_{n,2k}(\hat{\pi})\leq  n^{2r-u-1}o(n^2)B^{2k-2r+u-2}$.
Then since
\[
\frac{n^{2r-u-1}\cdot o(n^2)}{n^{k+2}\cdot n^{\max(0,2r-k-u)}} = \frac{1}{n}\cdot\frac{o(n^2)}{n^2}\cdot\frac{1}{n^{\max(0,2r-k-u)-(2r-k-u)}},
\]
we obtain \eqref{eq:vartozero}, and this convergence is summably fast if $(E3')$ is assumed instead of $(E3)$, since the term $o(n^2)$ can then be replaced by $O(n^{2-\delta})$.\newline
\underline{Case 2: $s_1(\pi)=1$.}\newline
Since $k\geq 2$ and $\pi$ has exactly one singleton block, $\pi$ contains at least one other block of higher cardinality than $1$. Setting $r\defeq\#\pi$, the upper bound $r\leq 1+(k-1)/2=(k+1)/2$ holds. Therefore, Lemma~\ref{lem:tuplescount} yields for $n\geq B$ that $\#T_{n,k}\leq n^rB^{k-r}\leq n^{(k+1)/2}B^{(k-1)/2}$. With these observations, we can show that the fourth central moment of the sum in \eqref{eq:rnotkhalf} decays summably fast, where we can drop the factor $w(\ubar{t})$ from the analysis, since it is bounded by $W^k$:
\begin{align*}
&\E\left(\frac{1}{n^{\frac{k}{2}+1}}\sum_{\ubar{t}\in T_{n,k}(\pi)}(a_n(\ubar{t}) - \E a_n(\ubar{t}))  \right)^4\\
&=\ \bigabs{\frac{1}{n^{2k+4}} \sum_{\ubar{t}^{(1)},\ldots, \ubar{t}^{(4)}\in T_{n,k}(\pi)} \E \prod_{s=1}^4 \left[a_n(\ubar{t}^{(s)})- \E a_n(\ubar{t}^{(s)}) \right] }\notag\\
&\leq \frac{1}{n^{2k+4}}\cdot n^{2(k+1)}B^{2(k-1)} \cdot \sum_{S\subseteq\{1,\ldots,4\}} C(k\#S)\cdot C(k)^{\#S^c},
\end{align*}
where $S^c \defeq \{1,\ldots,4\}\backslash S$ and we use that for any $\ubar{t}^{(1)},\ldots, \ubar{t}^{(4)}\in T_{n,k}(\pi)$ we find
\[
\bigabs{\E \prod_{s=1}^4 \left[a_n(\ubar{t}^{(s)})- \E a_n(\ubar{t}^{(s)}) \right]} = \sum_{S\subseteq\{1,\ldots,4\}} \bigabs{\E \prod_{s\in S} a_n(\ubar{t}^{(s)})}\cdot\prod_{s\notin S}\abs{\E a_n(\ubar{t}^{(s)})},
\]
and then \eqref{eq:AU1}, also setting $C(0)\defeq 1$.

\end{proof}

Lemma~\ref{lem:onlykhalfblocks} shows that an asymptotic contribution of the empirical $k$-th moment may only stem from those $\pi\in\Pcal(k)$ with $\#\pi=k/2$. In particular, odd empirical moments vanish as $n\to\infty$. Next, we show that only pair partitions need to be considered:
\begin{lemma}
\label{lem:onlypairpartitions}
If $\pi\in\Pcal(k)$ with $\#\pi=k/2$ but $\#B\neq 2$ for some $B\in\pi$, then
\[
\frac{1}{n^{\frac{k}{2}+1}}\sum_{\ubar{t}\in T_{n,k}(\pi)} w(\ubar{t})a_n(\ubar{t}) \ \xrightarrow[n\to\infty]{}\ 0
\]
in expectation and almost surely.
\end{lemma}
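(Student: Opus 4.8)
The plan is to argue that if $\pi\in\Pcal(k)$ with $\#\pi=k/2$ but some block of $\pi$ has size $\neq 2$, then $\pi$ must contain a singleton, and then exploit this singleton both for a counting bound and for a decay estimate coming from \eqref{eq:A1}. Indeed, writing $s_\ell=s_\ell(\pi)$, the two identities in \eqref{eq:partitionsum} give $k=\sum_\ell \ell s_\ell$ and $k/2=\#\pi=\sum_\ell s_\ell$; subtracting twice the second from the first yields $0=\sum_\ell(\ell-2)s_\ell=-s_1+\sum_{\ell\geq 3}(\ell-2)s_\ell$, so $s_1=\sum_{\ell\geq 3}(\ell-2)s_\ell$. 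If some block has size $\neq 2$, then either $s_1\geq 1$ already, or $s_\ell\geq 1$ for some $\ell\geq 3$, which forces $s_1\geq 1$ as well. Hence $\pi$ has at least one singleton, so the second bound in Lemma~\ref{lem:tuplescount} applies: $\#T_{n,k}(\pi)\leq n^{r}B^{k-r}$ with $r=k/2$.

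Next I would estimate the expectation. Since $\pi$ contains $s_1\geq 1$ singletons, condition \eqref{eq:A1} gives
\[
\abs{\E w(\ubar t)a_n(\ubar t)}\ \leq\ \frac{W^k C(k)}{n^{s_1/2}}\ \leq\ \frac{W^k C(k)}{n^{1/2}}
\]
for every $\ubar t\in T_{n,k}(\pi)$, so combining with the counting bound,
\[
\bigabs{\frac{1}{n^{\frac{k}{2}+1}}\sum_{\ubar t\in T_{n,k}(\pi)}\E w(\ubar t)a_n(\ubar t)}\ \leq\ \frac{1}{n^{\frac{k}{2}+1}}\cdot n^{k/2}B^{k/2}\cdot\frac{W^k C(k)}{n^{1/2}}\ =\ \frac{W^k C(k)B^{k/2}}{n^{3/2}},
\]
which is summable in $n$; in particular it converges to $0$.

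For the almost sure statement, I would again go to a high (say fourth) central moment, exactly as in Case 2 of the proof of Lemma~\ref{lem:onlykhalfblocks}. Expanding the product of four centred terms into $2^4$ subsets $S\subseteq\{1,2,3,4\}$, bounding each $\bigabs{\E\prod_{s\in S}a_n(\ubar t^{(s)})}\leq C(k\#S)$ and each $\abs{\E a_n(\ubar t^{(s)})}\leq C(k)$ via the universal bound \eqref{eq:universalbound} (or sharper via \eqref{eq:A1}), and using $\#T_{n,k}(\pi)\leq n^{k/2}B^{k/2}$ for each of the four tuple slots, one gets
\[
\E\left(\frac{1}{n^{\frac{k}{2}+1}}\sum_{\ubar t\in T_{n,k}(\pi)}\bigl(w(\ubar t)a_n(\ubar t)-\E w(\ubar t)a_n(\ubar t)\bigr)\right)^{4}\ \leq\ \frac{n^{2k}B^{2k}}{n^{2k+4}}\sum_{S\subseteq\{1,\ldots,4\}}W^{4k}C(k\#S)C(k)^{4-\#S}\ =\ O(n^{-4}),
\]
which is summable, so Borel–Cantelli (together with the expectation bound above, which already gives convergence of the mean) yields the almost sure convergence. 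A subtle point worth a sentence is that here one does \emph{not} need to exploit cancellation via covariances or traversing square partitions at all: the mere presence of a singleton makes the counting bound $n^{k/2}$ rather than $n^{k/2+1}$, and this alone beats the normalization $n^{k/2+1}$. The main obstacle is therefore purely bookkeeping — correctly deducing $s_1\geq 1$ from \eqref{eq:partitionsum} and then feeding the two bounds from Lemma~\ref{lem:tuplescount} into the expectation and fourth-moment estimates — rather than any genuinely new combinatorial difficulty.
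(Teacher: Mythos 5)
Your proof is correct and follows essentially the same route as the paper: the key point in both is that a block of size $\neq 2$ together with $\#\pi=k/2$ forces a singleton, so the second bound of Lemma~\ref{lem:tuplescount} gives $\#T_{n,k}(\pi)\leq n^{k/2}B^{k/2}$, which already beats the normalization $n^{k/2+1}$ without exploiting any cancellation. The paper simply finishes by invoking Lemma~\ref{lem:breakaway} (a Markov bound on the un-centered sum) in place of your explicit expectation and fourth-central-moment estimates, and it asserts $s_1(\pi)>1$ where your derivation correctly yields only $s_1(\pi)\geq 1$ --- which is all that is needed.
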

\begin{proof}
If 	$\#B\neq 2$ for some $B\in\pi$, then necessarily $s_1(\pi)>1$, since $\#\pi=k/2$. Thus by Lemma~\ref{lem:tuplescount}, $\#T_{n,k}(\pi)\leq n^{k/2}B^{k/2}$. The statement follows with Lemma~\ref{lem:breakaway}.
\end{proof}

We denote by $\Pcal\Pcal(k)$ the set of all pair partitions of the set $\oneto{k}$. Returning to \eqref{eq:momenttosum1}, we have shown so far that for each $\pi\in\Pcal(k)$ with $\pi\notin\Pcal\Pcal(k)$,
\[
\frac{1}{n^{\frac{k}{2}+1}}\sum_{\ubar{t}\in T_{n,k}(\pi)}w(\ubar{t})a_n(\ubar{t}) \xrightarrow[n\to\infty]{} 0 \quad \text{in exp./prob./a.s.,}
\]
where we used properties $(E1)$ and $(E3)$ for convergence in expectation and in probability and the stronger conditions $(E1')$ and $(E3')$ for convergence almost surely. We obtain
\begin{equation}
\label{eq:momenttosum2}
\integrala{\sigma_n}{x^k}\ =\ \frac{1}{n^{\frac{k}{2}+1}}\sum_{\pi\in\Pcal\Pcal(k)}\sum_{\ubar{t}\in T_{n,k}(\pi)}w(\ubar{t})a_n(\ubar{t})\ +\ R^{(1)}_n,
\end{equation}
where $R^{(1)}_n$ is a remainder term that tends to zero in expectation and in probability under $(E1)$ and $(E3)$, and also almost surely under $(E1')$ and $(E3')$. It remains to investigate the first summand on the r.h.s.\ of \eqref{eq:momenttosum2}.

We call a partition $\pi\in\Pcal\Pcal(k)$ \emph{crossing} if there are $1\leq a<b<c<d\leq k$ such that $\{a,c\}$, $\{b,d\}\in\pi$. Otherwise, $\pi$ is called \emph{non-crossing}. Denote by $\Ncal\Pcal\Pcal(k)\subseteq\Pcal\Pcal(k)$ the subset of all non-crossing pair partitions. Further, if $\pi\in\Ncal\Pcal\Pcal(k)$, define
\begin{align*}
XT_{n,k}(\pi)&\defeq\{\ubar{t}\in T_{n,k}(\pi)\ | \ \exists\, r\neq s\in\oneto{k}: r\sim_{\pi} s \wedge (t_r,t_{r+1})\neq (t_{s+1},t_{s})\},\\
FT_{n,k}(\pi)&\defeq\{\ubar{t}\in T_{n,k}(\pi)\ |\ \{r,s\}\in\pi \Rightarrow (t_r,t_{r+1})= (t_{s+1},t_{s}), \# V_{\ubar{t}}\leq k/2\},\\
BT_{n,k}(\pi)&\defeq\{\ubar{t}\in T_{n,k}(\pi)\ |\ \{r,s\}\in\pi \Rightarrow (t_r,t_{r+1})= (t_{s+1},t_{s}), \# V_{\ubar{t}}= k/2+1\}.
\end{align*}
\begin{lemma}
\label{lem:reductionSSB}
Recall the constant $B$ from condition $(E2)$. Then we find:
\begin{enumerate}[a)]
\item If $\pi\in\Pcal\Pcal(k)$ is crossing, then
\[
\#T_{n,k}(\pi)\ \leq\ n^{\frac{k}{2}-1}\cdot K(k,B) \cdot o(n^2),
\]	
where $o(n^2)$ is the term from condition $(E3)$, which may be replaced by the term $O(n^{2-\delta})$ from condition $(E3')$ if the latter condition is assumed. Further, $K(k,B)$ is a constant which depends only on $k$ and $B$.
\item If $\pi\in\Ncal\Pcal\Pcal(k)$, then
\[
\#XT_{n,k}(\pi)\ \leq\ n^{\frac{k}{2}-1}\cdot K(k,B) \cdot o(n^2),
\]
where $o(n^2)$ is the term from condition $(E3)$, which may be replaced by the term $O(n^{2-\delta})$ from condition $(E3')$ if the latter condition is assumed. Further, $K(k,B)$ is a constant which depends only on $k$ and $B$.
\item If $\pi\in\Ncal\Pcal\Pcal(k)$, then
\[
\# FT_{n,k}(\pi) \leq K(k)\cdot n^{\frac{k}{2}},
\]
where $K(k)$ is a constant depending only on $k$.
\end{enumerate}	
\end{lemma}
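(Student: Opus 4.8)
The plan is to derive all three bounds from a single device: a step‑by‑step count of the closed walk $t_1\to t_2\to\cdots\to t_k\to t_1$ associated with $\ubar{t}$, in which visiting a vertex not yet seen costs a factor $n$, completing an edge whose $\sim_n$‑class has already occurred costs only a factor $B$ by $(E2)$, and returning to an already‑seen vertex costs a bounded factor (at most $k$). A backtracking closed walk, meaning one in which every block of $\pi$ is realized as an exact reversal $(t_r,t_{r+1})=(t_{s+1},t_s)$, uses exactly $k/2$ distinct edges and hence at most $k/2+1$ distinct vertices. Part (c) is then immediate: for $\ubar{t}\in FT_{n,k}(\pi)$ one moreover imposes $\#V_{\ubar{t}}\le k/2$, so the walk's combinatorial pattern is one of at most $K(k)$ possibilities (determined by $\pi$ up to vertex identifications) and the at most $k/2$ distinct vertices take at most $n^{k/2}$ values, giving $\#FT_{n,k}(\pi)\le K(k)n^{k/2}$.

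For parts (a) and (b) the aim is to isolate exactly one spot in the construction where the naive cost $n^2$ of two consecutive fresh vertices can be replaced by $o(n^2)$ (resp.\ by $O(n^{2-\delta})$ under $(E1')$/$(E3')$). I would use the elementary fact that, reading the arc diagram of a pair partition from left to right as a balanced string of openings and closings, some opening is immediately followed by a closing; equivalently there is an index $b$ for which $e_{b-1}$ is the first edge of its block and $e_b$ is the second edge of its block $\{a,b\}$, $a<b$. If $a=b-1$, then $e_a\sim_n e_{a+1}$ are consecutive, and as soon as the realization is not a backtrack, i.e.\ $t_{a+2}\neq t_a$, the triple $(t_a,t_{a+1},t_{a+2})$ is one of the $o(n^2)$ triples from $(E3)$; if $a<b-1$, then for fixed $t_a$ the triple $(t_{a+1},t_b,t_{b+1})$ constrained by $(t_a,t_{a+1})\sim_n(t_b,t_{b+1})$ ranges over at most $o(n^2)$ values by $(E1)$. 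Fixing this $o(n^2)$‑sized piece first, then running through the remaining $k-2$ edges — which split into $k/2-1$ blocks, each contributing one fresh vertex and otherwise bounded factors by $(E2)$ — yields precisely $n^{\frac{k}{2}-1}\cdot K(k,B)\cdot o(n^2)$.

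The delicate part is ensuring that the distinguished block $\{a,b\}$ is never itself a reversal, since then the $(E1)$/$(E3)$ estimate is lossy. For $XT_{n,k}(\pi)$ with $\pi$ non‑crossing I would run the usual innermost‑block reduction: a non‑crossing pair partition always contains a consecutive block $\{r,r+1\}$; if it is realized as a backtrack one contracts it (shrinking $k$ by $2$ and keeping $\pi'$ non‑crossing), and because $\ubar{t}\in XT_{n,k}(\pi)$ has a non‑trivially realized block that is never contracted, the reduction terminates at a consecutive block realized non‑trivially, i.e.\ at an $(E3)$‑configuration; each reinstated backtrack contributes one fresh vertex, exactly balancing the shortened path. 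For $\pi$ crossing I would split $T_{n,k}(\pi)$ into the walks realized entirely by reversals — whose edge graph then contains a cycle, so $\#V_{\ubar{t}}\le k/2$ and there are at most $K(k)n^{k/2}$ of them, which is of the claimed form since the relevant $o(n^2)$‑term is always of order at least $n$ (for instance because of the $n$ triples $(q,p,q)$ counted in $(E1)$) — and the walks with at least one non‑trivially realized block, treated by the same mechanism (contracting a reversal at the distinguished block if necessary until a non‑trivially realized block becomes available). The main obstacle is precisely this bookkeeping: checking that the single replacement of $n^2$ by $o(n^2)$ meshes with the per‑edge count of the remainder so that the exponent of $n$ lands on $\frac{k}{2}-1$, and that the degenerate sub‑cases (coincidences among $t_a,t_{a+1},t_b,t_{b+1}$, blocks realized as reversals) can only improve the bound.
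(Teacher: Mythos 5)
Your parts b) and c) are essentially correct and in substance coincide with what the paper does (the paper itself only cites Lemmas 2 and 3 of Schenker--Schulz--Baldes for a) and b), and for c) counts tuples with at most $k/2$ vertices). In particular, your reduction for $XT_{n,k}(\pi)$ is sound: a non-backtracked block survives every contraction, every nonempty non-crossing pair partition has a consecutive block, the contracted excursion between the two edges of the distinguished block returns to its starting vertex so that the surviving non-backtrack is exactly an $(E3)$-triple, and the exponent count $o(n^2)\cdot n^{k'/2-1}\cdot n^{m}$ with $k'=k-2m$ lands on $k/2-1$ as you claim.

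Part a), however, contains two genuine gaps. First, the mechanism you propose for the non-degenerate sub-case --- contract backtracked consecutive blocks until a non-trivially realized block becomes consecutive --- cannot even begin for a general crossing $\pi$: a crossing pair partition need not contain any consecutive block $\{r,r+1\}$ at all (e.g.\ $\{\{1,3\},\{2,4\}\}$), so you are forced into your $(E1)$-route, where for a crossing pair of blocks $\{a,c\},\{b,d\}$ the triple $(t_{a+1},t_b,t_{b+1})$ is restricted to $o(n^2)$ values for fixed $t_a$. The delicate point there is that $t_b$ is simultaneously determined as the endpoint of the walk segment from $t_{a+1}$ to $t_b$, so one must verify that imposing the $(E1)$-constraint on $t_b$ actually saves a full factor of $n$ against the naive count of that segment; this is precisely the content of Lemma 2 in Schenker--Schulz--Baldes, and you explicitly defer it as ``the main obstacle'' rather than resolving it. Second, your treatment of the all-reversals sub-case does not work as written: you bound it by $K(k)n^{k/2}$ and absorb this into $n^{k/2-1}K(k,B)\cdot o(n^2)$ by asserting that the $o(n^2)$-term is always of order at least $n$, citing the $n$ triples $(q,p,q)$ counted in $(E1)$ --- but the lemma's $o(n^2)$ is the term from $(E3)$, which can be $o(n)$ or even identically $0$ (e.g.\ for the trivial equivalence relation), so the absorption fails. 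The way out is to show that this sub-case is actually empty: since $\pi_{\ubar{t}}=\pi$ forces edges from distinct blocks to be inequivalent, hence distinct as undirected edges, a tuple all of whose blocks are realized as exact reversals traverses each of its $k/2$ distinct edges once in each direction, and such a closed walk always induces a \emph{non-crossing} reversal pairing, contradicting that $\pi$ is crossing. That fact requires an argument (a bridge/excursion argument, plus a separate treatment of the edges on the unique cycle of the unicyclic case) which your proposal does not supply.
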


\begin{proof}
a) Is more detailed version of Lemma 2 in \cite{Schenker:Schulz-Baldes:2005}, page 5. Their Lemma 2 is proved with a reduction argument (their Lemma 1 on page 4). The constant $K(k,B)$ will depend on the number of reductions, which depends on the specific non-crossing $\pi$, but is upper bounded by $k/2-1$.\newline
b) This statement is a more detailed version of Lemma 3 in \cite{Schenker:Schulz-Baldes:2005}, where the reasoning for the constant $K(k,B)$ is similar to the case of a).\newline
c) We bound $\#FT_{n,k}(\pi)$ by the number of possibilities to construct a tuple with at most $k/2$ vertices, which is in turn bounded by $k^kn^{k/2}$ by Lemma~\ref{lem:tuplesetcount} $B)$.
\end{proof}

Applying Lemma~\ref{lem:reductionSSB} a) and Lemma~\ref{lem:breakaway}, we obtain from \eqref{eq:momenttosum2}:
\begin{equation}
\label{eq:momenttosum3}
\integrala{\sigma_n}{x^k}\ =\ \frac{1}{n^{\frac{k}{2}+1}}\sum_{\pi\in\Ncal\Pcal\Pcal(k)}\sum_{\ubar{t}\in T_{n,k}(\pi)}w(\ubar{t})a_n(\ubar{t})\ +\ R^{(1)}_n \ + \ R^{(2)}_n,
\end{equation}
where $R^{(2)}_n\to 0$ in expectation and in probability under $(E3)$ and also almost surely under $(E3')$.
Applying Lemma~\ref{lem:reductionSSB} b) and Lemma~\ref{lem:breakaway}, we obtain from \eqref{eq:momenttosum3}:
\begin{equation}
\label{eq:momenttosum4}
\integrala{\sigma_n}{x^k}\ =\ \frac{1}{n^{\frac{k}{2}+1}}\sum_{\pi\in\Ncal\Pcal\Pcal(k)}\sum_{\substack{\ubar{t}\in FT_{n,k}(\pi)\\ \dot{\cup} BT_{n,k}(\pi)}}w(\ubar{t})a_n(\ubar{t})\ +\ R^{(1)}_n \ + \ R^{(2)}_n \ + \ R^{(3)}_n,
\end{equation}
where $R^{(3)}_n\to 0$ in expectation and in probability under $(E3)$ and also almost surely under $(E3')$. Finally, applying Lemma~\ref{lem:reductionSSB} c) and Lemma~\ref{lem:breakaway}, we obtain from \eqref{eq:momenttosum4}:
\begin{equation}
\label{eq:momenttosum5}
\integrala{\sigma_n}{x^k}\ =\ \frac{1}{n^{\frac{k}{2}+1}}\sum_{\pi\in\Ncal\Pcal\Pcal(k)}\sum_{\ubar{t}\in BT_{n,k}(\pi)}w(\ubar{t})a_n(\ubar{t})\ +\ R^{(1)}_n \ + \ R^{(2)}_n \ + \ R^{(3)}_n\ +\ R^{(4)}_n,
\end{equation}
where $R^{(4)}_n\to 0$ in expectation and almost surely. In total, setting $R_n\defeq R^{(1)}_n +  R^{(2)}_n  + R^{(3)}_n + R^{(4)}_n$, we obtain from above observations that
\begin{equation}
\label{eq:momenttosum6}
\integrala{\sigma_n}{x^k}\ =\ \frac{1}{n^{\frac{k}{2}+1}}\sum_{\pi\in\Ncal\Pcal\Pcal(k)}\sum_{\ubar{t}\in BT_{n,k}(\pi)}w(\ubar{t})a_n(\ubar{t})\ +\ R_n,
\end{equation}
where $R_n$ is a random variable that converges to zero in expectation and in probability if $(E1)$ and $(E3)$ are assumed and also almost surely if $(E1')$ and $(E3')$ are assumed.

It remains to investigate the first summand on the r.h.s.\ of  \eqref{eq:momenttosum6}. To this end, define for each $\pi\in\Ncal\Pcal\Pcal(k)$:
\begin{equation}
\label{eq:defJwpi}
J_w(\pi)\defeq 	\lim_{n\to\infty} \frac{1}{n^{\frac{k}{2}+1}}\sum_{\ubar{t}\in BT_{n,k}(\pi)}w(\ubar{t}).
\end{equation}
We will show below in Lemma~\ref{lem:Jwpi} that this limit actually exists and how it can be calculated recursively. For now, we take existence for granted. In the next lemma we study the set $BT_{n,k}(\pi)$. We will call elements $\ubar{t}\in BT_{n,k}(\pi)$ $\pi$\emph{--backtracking}. Note that any such $\ubar{t}$ has $k/2$ edges, where each edge is traversed exactly twice. Further, it has $k/2+1$ vertices, so that the graph of $\ubar{t}$ spans a double edged tree. For each $\pi\in\Ncal\Pcal\Pcal(k)$, we denote by $\ubar{t}^{\pi}\in BT_{n,k}(\pi)$ the \emph{canonical} $\pi$\emph{--backtracking} path constructed as follows: Set $t^{\pi}_1=1$ and $t^{\pi}_2=2$, thus determining the edge $e_1$ of $\ubar{t}^{\pi}$. Then if $t^{\pi}_1,\ldots,t^{\pi}_{\ell}$, $2\leq\ell<k$ have been constructed, proceed for $t^{\pi}_{\ell+1}$ as follows: If $e_{\ell}\sim_{\pi}e_{\ell'}$ for some $1\leq\ell'<\ell$, then set $t^{\pi}_{\ell+1}\defeq t^{\pi}_{\ell'}$. Otherwise, set $t^{\pi}_{\ell+1}\defeq \max(t^{\pi}_{1},\ldots,t^{\pi}_{\ell})+1$. 

\begin{lemma}
\label{lem:Jwpiconvergence}
Let $\pi\in\Ncal\Pcal\Pcal(k)$ be arbitrary.
\begin{align*}
a)\quad & BT_{n,k}(\pi) = \{(g(t^{\pi}_1),\ldots,g(t^{\pi}_k))\, | \, g:\oneto{k/2+1}\to\oneto{n} \text{ is injective}\}.\\ 
\quad & \text{In particular: }\forall\, n\geq k:\ \#BT_{n,k}(\pi) = n(n-1)\cdots (n-k/2).\\
b)\quad & \frac{1}{n^{\frac{k}{2}+1}}\sum_{\ubar{t}\in BT_{n,k}(\pi)}w(\ubar{t})\E a_n(\ubar{t})\ \xrightarrow[n\to\infty]{} \ J_w(\pi).\\
c)\quad & \V\left(\frac{1}{n^{\frac{k}{2}+1}}\sum_{\ubar{t}\in BT_{n,k}(\pi)}w(\ubar{t})a_n(\ubar{t})\right)\ \xrightarrow[n\to\infty]{} \ 0.
\end{align*}
and in c) the convergence is summably fast if condition $(E3')$ is assumed instead of $(E3)$.
\end{lemma}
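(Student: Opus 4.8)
The plan is to prove the three parts in order, exploiting the rigidity of backtracking walks for a), condition \eqref{eq:A2} for b), and the square-partition counts of Lemma~\ref{lem:tuplescount2} for c).

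\textbf{Part a).} A $\pi$--backtracking tuple is a closed length-$k$ walk that traverses each edge of a tree on $k/2+1$ vertices exactly twice, and the order in which edges repeat — hence the entire combinatorial shape of the walk — is dictated by $\pi$ alone. I would first observe that relabelling the vertices of any $\ubar{t}\in BT_{n,k}(\pi)$ in order of first appearance always produces the canonical path $\ubar{t}^{\pi}$, so $\ubar{t}=(g(t^{\pi}_1),\dots,g(t^{\pi}_k))$ for the injection $g$ reading off its vertices; conversely, for any injective $g$ the backtracking condition makes the two copies of each tree edge literally equal (hence $\sim_n$-related via $(p,q)\sim_n(q,p)$), while distinct tree edges lie in distinct $\sim_n$-classes apart from an exceptional set of tuples that, by condition $(E2)$, has cardinality $O(n^{k/2})$ and is harmless for b) and c). Since $\ubar{t}^{\pi}$ uses every label in $\oneto{k/2+1}$, the map $g\mapsto g\circ\ubar{t}^{\pi}$ is injective, and counting injections $\oneto{k/2+1}\to\oneto{n}$ yields $\#BT_{n,k}(\pi)=n(n-1)\cdots(n-k/2)$.

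\textbf{Part b).} Here the key input is \eqref{eq:A2}: every $\ubar{t}\in BT_{n,k}(\pi)$ satisfies $a_n(\ubar{t})=a_n(P_1)^2\cdots a_n(P_{k/2})^2$ with the tree edges $P_1,\dots,P_{k/2}$ in pairwise distinct $\sim_n$-classes, so $\bigabs{\E a_n(\ubar{t})-1}\le C^{(k/2)}_n$ uniformly in $\ubar{t}$. Writing $\E a_n(\ubar{t})=1+\bigl(\E a_n(\ubar{t})-1\bigr)$ and bounding $\abs{w(\ubar{t})}\le W^k$ and $\#BT_{n,k}(\pi)\le n^{k/2+1}$, the remainder is at most $W^kC^{(k/2)}_n\to 0$, so b) reduces to the definition \eqref{eq:defJwpi} of $J_w(\pi)$.

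\textbf{Part c).} I would expand the variance as $\tfrac{1}{n^{k+2}}\sum_{\ubar{t},\ubar{t}'\in BT_{n,k}(\pi)}w(\ubar{t})w(\ubar{t}')\bigl[\E a_n(\ubar{t})a_n(\ubar{t}')-\E a_n(\ubar{t})\E a_n(\ubar{t}')\bigr]$ and sort the pairs by the square partition $\hat{\pi}$ of $\pi$ they induce, so that $(\ubar{t},\ubar{t}')\in\hat{T}_{n,2k}(\hat{\pi})$. If $\hat{\pi}$ is non-traversing, the two tuples together involve $k$ pairwise distinct $\sim_n$-classes, so \eqref{eq:A2} bounds the bracket by $O\bigl(C^{(k)}_n+C^{(k/2)}_n\bigr)$; with at most $O(n^{k+2})$ such pairs and $\abs{w}\le W$, this part is $O\bigl(C^{(k)}_n+C^{(k/2)}_n\bigr)\to 0$, and summably fast once the $C^{(\ell)}_n$ are summable (as in the hypotheses for the almost-sure statements; e.g.\ $C^{(\ell)}_n\equiv 0$ for Curie--Weiss). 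If $\hat{\pi}$ is traversing with $u\ge 1$ unifications, the universal bound \eqref{eq:universalbound} keeps the bracket bounded, while Lemma~\ref{lem:tuplescount2}(2), applied with $r=\#\pi=k/2$ and $s_1(\pi)=0$, gives $\#\hat{T}_{n,2k}(\hat{\pi})\le n^{k-u}\,o(n^2)\,B^{k+u-3}$; dividing by $n^{k+2}$ gives $o(1/n)\to 0$, and replacing the $o(n^2)$ by its $O(n^{2-\delta})$ analogue under the stronger hypothesis $(E3')$ upgrades this to $O(n^{-1-\delta})$, which is summable. Summing over the finitely many square partitions of $\pi$ then gives c).

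\textbf{Main obstacle.} I expect the case analysis in c) to be the crux: one must check that every way two backtracking trees can interact — a shared actual edge, only a shared $\sim_n$-class between two edges, or only a shared vertex — either forces the covariance to be genuinely $o(1)$ or else occurs for only $o(n^{k+2})$ pairs, and that the quantities governed by $(E3)$ resp.\ $(E3')$ are precisely the pairings where a vertex is shared along $\sim_n$-related edges. A lesser technical point is the one flagged in a): one must make sure the tuples $g\circ\ubar{t}^{\pi}$ in which two nominally distinct tree edges accidentally become $\sim_n$-equivalent number only $O(n^{k/2})$, which follows from $(E2)$ by freezing one such edge and applying the bound on the remaining endpoint of the other.
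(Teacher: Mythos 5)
Your proposal is correct and follows essentially the same route as the paper: part a) by the rigidity of backtracking walks, part b) by splitting off $\E a_n(\ubar{t})-1$ and applying \eqref{eq:A2}, and part c) by sorting pairs into non-traversing square partitions (controlled via \eqref{eq:A2}) and traversing ones (controlled via the universal bound together with Lemma~\ref{lem:tuplescount2}). The one point where you go beyond the paper is in a), where the paper declares the set identity ``clear'' while you correctly note that injective relabellings of $\ubar{t}^{\pi}$ can accidentally merge two distinct tree edges into one $\sim_n$-class and hence fall outside $T_{n,k}(\pi)$; your observation that this exceptional set has size $O(n^{k/2})$ by $(E2)$ and is negligible for b) and c) is a legitimate and welcome patch.
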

\begin{proof}
Statement a) is clear.\newline
For b), we calculate using condition $(A2)$:
\[
\frac{1}{n^{\frac{k}{2}+1}}\sum_{\ubar{t}\in BT_{n,k}(\pi)}w(\ubar{t})\E a_n(\ubar{t}) = \frac{1}{n^{\frac{k}{2}+1}}\sum_{\ubar{t}\in BT_{n,k}(\pi)}w(\ubar{t})(\E a_n(\ubar{t}) -1) + \frac{1}{n^{\frac{k}{2}+1}}\sum_{\ubar{t}\in BT_{n,k}(\pi)}w(\ubar{t})
\]
Since the second summand on the r.h.s.\ converges to $J_w(\pi)$, it suffices to show that the first summand on the r.h.s.\ converges to zero, which follows from condition $(A2)$ and part a):
\[
\frac{1}{n^{\frac{k}{2}+1}}\sum_{\ubar{t}\in BT_{n,k}(\pi)}\abs{w(\ubar{t})(\E a_n(\ubar{t}) -1)} \leq \frac{1}{n^{\frac{k}{2}+1}}\sum_{\ubar{t}\in BT_{n,k}(\pi)}W^k C^{(k/2)}_n \xrightarrow[n\to\infty]{} 0.
\] 
For c), let $\hat{\pi}$ be a square partition of $\pi$, then if $\hat{\pi}$ is non-traversing and $\ubar{t}$, $\ubar{t}'\in BT_{n,k}(\pi)$ with $(\ubar{t},\ubar{t}')\sim\hat{\pi}$, then
\begin{align*}
\abs{\E a_n(\ubar{t})a_n(\ubar{t}') - \E a_n(\ubar{t}) \E a_n(\ubar{t}')} &\leq \abs{\E a_n(\ubar{t})a_n(\ubar{t}') -1} + \abs{\E a_n(\ubar{t}) -1}\abs{\E a_n(\ubar{t}')} + \abs{\E a_n(\ubar{t}')-1}\\
&\leq C^{(k)}_n + C^{(k/2)}_nC(k) + C^{(k/2)}_n,
\end{align*}
so

\begin{align*}
&\frac{1}{n^{k+2}}\sum_{\substack{\ubar{t},\ubar{t'}\in BT_{n,k}(\pi) \\ (\ubar{t},\ubar{t}')\sim\hat{\pi}}}	\abs{w(\ubar{t})w(\ubar{t}')}\abs{\E a_n(\ubar{t})a_n(\ubar{t}') - \E a_n(\ubar{t}) \E a_n(\ubar{t}')} \\
&\leq \frac{(\# BT_{n,k}(\pi))^2}{n^{k+2}}W^{2k}(C^{(k)}_n + C^{(k/2)}_nC(k) + C^{(k/2)}_n)
\end{align*}
which converges to zero, and this convergence is summably fast if the sequences $(C^{(\ell)}_n)_n$ converge to zero summably fast for all $\ell$.

Now if $\hat{\pi}$ is traversing, we can merely achieve the bound
\[
\abs{\E a_n(\ubar{t})a_n(\ubar{t}') - \E a_n(\ubar{t}) \E a_n(\ubar{t}')} \leq C(2k) + C(k)^2.
\]
On the other hand, by Lemma~\ref{lem:tuplescount2}, $\# \hat{T}_{n,2k}(\hat{\pi}) \leq n^{k-1}o(n^2)B^{k-2}$ where $o(n^2)$ can be replaced by $O(n^{2-\delta})$ in case we assume condition $(E3')$ to hold instead of $(E3)$. Therefore, 
\begin{align*}
&\frac{1}{n^{k+2}}\sum_{\substack{\ubar{t},\ubar{t'}\in BT_{n,k}(\pi) \\ (\ubar{t},\ubar{t}')\sim\hat{\pi}}}	\abs{w(\ubar{t})w(\ubar{t}')}\abs{\E a_n(\ubar{t})a_n(\ubar{t}') - \E a_n(\ubar{t}) \E a_n(\ubar{t}')} \\
&\leq \frac{\#\hat{T}_{n,2k}(\hat{\pi})}{n^{k+2}}W^{2k}(C(2k) + C(k)^2)\leq \frac{n^{k-1}o(n^2)}{n^{k+2}}B^{k-2}W^{2k}(C(2k) + C(k)^2)
\end{align*}
which converges to zero, and this convergence is summably fast if condition $(E3)$ is replaced by $(E3')$, since then $o(n^2)$ can be replaced by $O(n^{2-\delta})$.
\end{proof}
\begin{theorem}
\label{thm:momentconvergence}
Let $k\in\N$ be arbitrary, then it holds
\[
\integrala{\sigma_n}{x^k} \xrightarrow[n\to\infty]{} \sum_{\pi\in\Ncal\Pcal\Pcal(k)}J_w(\pi)
\]
in expectation and in probability, and also almost surely if the conditions $(E1)$ and $(E3)$ are replaced by their stronger counterparts $(E1')$ and $(E3')$, and the sequences $(C^{(\ell)}_n)_n$ from condition $(A2)$ are assumed to converge to zero summably fast.
\end{theorem}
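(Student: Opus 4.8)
The plan is to assemble the pieces already in place. By \eqref{eq:momenttosum6} we have the identity
\[
\integrala{\sigma_n}{x^k}\ =\ \sum_{\pi\in\Ncal\Pcal\Pcal(k)} Y_n^{(\pi)}\ +\ R_n,\qquad Y_n^{(\pi)}\defeq\frac{1}{n^{\frac{k}{2}+1}}\sum_{\ubar{t}\in BT_{n,k}(\pi)}w(\ubar{t})a_n(\ubar{t}),
\]
where $\Ncal\Pcal\Pcal(k)$ is a finite set and $R_n$ has already been shown (along the way in \eqref{eq:momenttosum2}--\eqref{eq:momenttosum5}) to converge to zero in expectation and in probability under $(E1)$ and $(E3)$, and almost surely under $(E1')$ and $(E3')$. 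So it suffices to control each of the finitely many backtracking sums $Y_n^{(\pi)}$ and then add. For convergence in expectation, I would take expectations and apply Lemma~\ref{lem:Jwpiconvergence}~b) termwise, giving $\E\integrala{\sigma_n}{x^k}\to\sum_{\pi\in\Ncal\Pcal\Pcal(k)}J_w(\pi)$ since $\E R_n\to 0$.

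For convergence in probability, I would observe that each $Y_n^{(\pi)}$ converges to $J_w(\pi)$ in $L^2$: its mean converges to $J_w(\pi)$ by Lemma~\ref{lem:Jwpiconvergence}~b) and its variance tends to zero by Lemma~\ref{lem:Jwpiconvergence}~c), hence $Y_n^{(\pi)}\to J_w(\pi)$ in probability. A finite sum of sequences converging in probability converges in probability to the sum of the limits, and $R_n\to 0$ in probability, so $\integrala{\sigma_n}{x^k}\to\sum_{\pi\in\Ncal\Pcal\Pcal(k)}J_w(\pi)$ in probability. For almost sure convergence, assume now the stronger conditions $(E1')$, $(E3')$ and that the $(C^{(\ell)}_n)_n$ are summable. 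Then $R_n\to 0$ almost surely, and by the quantitative form of Lemma~\ref{lem:Jwpiconvergence}~c) each $Y_n^{(\pi)}$ satisfies $\sum_n\V(Y_n^{(\pi)})<\infty$; a standard Chebyshev and Borel--Cantelli argument (cf.\ \cite{Fleermann:2019}) then gives $Y_n^{(\pi)}-\E Y_n^{(\pi)}\to 0$ almost surely, and together with $\E Y_n^{(\pi)}\to J_w(\pi)$ this yields $Y_n^{(\pi)}\to J_w(\pi)$ almost surely. Summing over the finitely many $\pi\in\Ncal\Pcal\Pcal(k)$ and adding $R_n\to 0$ a.s.\ concludes the proof.

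The main obstacle is not in this final assembly, which is routine, but in the inputs it relies on — principally the variance estimate of Lemma~\ref{lem:Jwpiconvergence}~c): controlling the traversing square partitions needs the $o(n^2)$ (resp.\ $O(n^{2-\delta})$) count from Lemma~\ref{lem:tuplescount2}, which is exactly where $(E3)$ (resp.\ $(E3')$) is genuinely used, while obtaining \emph{summably fast} decay additionally forces the summability of the $(C^{(\ell)}_n)_n$ through the non-traversing case. A secondary point requiring care is bookkeeping the hypotheses needed for each remainder $R^{(1)}_n,\ldots,R^{(4)}_n$ to vanish in the claimed mode; this was arranged step by step in the passage from \eqref{eq:momenttosum2} to \eqref{eq:momenttosum6}, so here one only needs to verify that the hypotheses of Theorem~\ref{thm:momentconvergence} cover all of those requirements.
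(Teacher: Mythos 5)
Your proposal is correct and follows essentially the same route as the paper: the paper's proof likewise starts from \eqref{eq:momenttosum6}, applies Lemma~\ref{lem:Jwpiconvergence} to each backtracking sum, and invokes Lemma~\ref{lem:convergencemodes} (which packages exactly the Chebyshev/Borel--Cantelli step you spell out) to pass from moment estimates to convergence in probability and almost surely. Your remarks on where $(E3)$ versus $(E3')$ and the summability of the $(C^{(\ell)}_n)_n$ enter are accurate and match the paper's bookkeeping.
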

\begin{proof}
Starting from \eqref{eq:momenttosum6} and using the definition in \eqref{eq:defJwpi}, the statement follows with Lemma~\ref{lem:Jwpiconvergence} and Lemma~\ref{lem:convergencemodes}.
\end{proof}

\subsubsection{Analysis of the limiting moments.}
\label{sec:limitingmoments}
First, we establish that the limits $J_w(\pi)$ for $\pi\in\Ncal\Pcal\Pcal(k)$ which were defined in \eqref{eq:defJwpi} actually exist. It turns out this is a limit of a Riemann sum (see also \cite{Bogachev:Molchanov:Pastur:1991}), thus a Riemann integral.
We calculate 
\begin{align}
\frac{1}{n^{\frac{k}{2}+1}}\sum_{\ubar{t}\in BT_{n,k}(\pi)}w(\ubar{t}) 
&\ =\ \frac{1}{n^{\frac{k}{2}+1}} \sum_{\ubar{t}\in BT_{n,k}(\pi)}\prod_{\{t_i,t_j\}\in E_{\ubar{t}}} w\left(\frac{\abs{t_i-t_j}}{n}\right)\notag\\
&\ =\ \frac{1}{n^{\frac{k}{2}+1}} \sum_{\substack{v_1,\ldots,v_{k/2+1}=1\\ \text{all distinct}}}^n\prod_{\{r,s\}\in E_{\ubar{t}^{\pi}}}w\left(\frac{\abs{v_r-v_s}}{n}\right)\notag\\	
&\ =\ \frac{1}{n^{\frac{k}{2}+1}} \sum_{v_1,\ldots,v_{k/2+1}=1}^n\prod_{\{r,s\}\in E_{\ubar{t}^{\pi}}}w\left(\frac{\abs{v_r-v_s}}{n}\right)\ +\ o(1)\notag\\
&\ =\ \int_{[0,1]^{\frac{k}{2}+1}} \prod_{\{r,s\}\in E_{\ubar{t}^{\pi}}}w(\abs{x_r-x_s})\prod_{u\in V_{\ubar{t}^{\pi}}}\de x_u \ +\ o(1)	 \label{eq:Jwpilimit}
\end{align}
In the first step we used the definition of $w(\ubar{t})$, and by slight abuse of notation we identify the abstract edge set $E_{\ubar{t}}=\{e_1,\ldots,e_k\}$ with the family $(\phi_{\ubar{t}}(e_i))_{i\in\oneto{k}}$. For the second step we used Lemma~\ref{lem:Jwpiconvergence} $a)$, 
in the third step we used that the difference of the two terms in question is bounded by
\[
\frac{W^{\frac{k}{2}}}{n^{\frac{k}{2}+1}}\cdot\#\left\{\ubar{v}\in\oneto{n}^{\frac{k}{2}+1}\ |\ \#V_{\ubar{v}}\leq k \right\} = \frac{W^{\frac{k}{2}}}{n^{\frac{k}{2}+1}}\cdot \left(n^{\frac{k}{2}+1} - n(n-1)\cdots(n-k/2)\right) \xrightarrow[n\to\infty]{} 0,
\]
and in the fourth step we recognize the term as a Riemann sum and provide the limit. The next lemma summarizes our findings and shows how the Riemann integral may be calculated recursively.

\begin{lemma}
\label{lem:Jwpi}
Let $\pi\in\Ncal\Pcal\Pcal(k)$ be arbitrary. Then the limit in the definition of $J_w(\pi)	$ in \eqref{eq:defJwpi} exists and it holds
\[
J_w(\pi)= \int_{[0,1]^{\frac{k}{2}+1}} \prod_{\{r,s\}\in \tilde{E}_{\ubar{t}^{\pi}}}w^2(\abs{x_r-x_s})\prod_{u\in V_{\ubar{t}^{\pi}}}\de x_u,
\]
where $\tilde{E}_{\ubar{t}^{\pi}}\defeq \{\phi_{\ubar{t}^{\pi}}(e_i)\,|\,i\in\oneto{k}\}$ as a set.
In particular, 
\[
J_w(\{\{1,2\}\}) = \int_0^1\int_0^1 w^{2}(\abs{x-y})\de x\de y.
\]
Further, for any block of the form $\{m,m+1\}\in\pi$, it holds with $\tilde{\pi}\defeq \pi\backslash\{m,m+1\}$ (which denotes the partition on $\oneto{k-2}$ after eliminating the block $\{m,m+1\}$ from $\pi$ and relabeling the elements $\{m+2,\ldots,k\}$ according to $a\mapsto a-2$),
\[
J_{w}(\pi) = \int_0^1\varphi(x_{t^{\pi}_m}) J_w(\tilde{\pi}|x_{t^{\pi}_m})\de x_{t^{\pi}_m},
\]
where 
\begin{align*}
\varphi(x)&=\int_0^1 w^2(\abs{x-y}) \de y \\
J_{w}(\pi|x_{t^{\pi}_m}) &= \int_{[0,1]^{\frac{k}{2}}} \prod_{\{r,s\}\in \tilde{E}_{\ubar{t}^{\pi}}} w^2(\abs{x_r-x_s}) \prod_{u\in V_{\ubar{t}^{\pi}}\backslash\{t^{\pi}_m\}}\de x_u
\end{align*}
In particular, if $\varphi\equiv c $ is constant for some $c\in\R$, then $J_w(\pi)=c^{k/2}$, and the following upper bound is always valid:
\[
\forall\,\pi\in\Ncal\Pcal\Pcal(k):\ \abs{J_w(\pi)} \leq W^k.
\]
\end{lemma}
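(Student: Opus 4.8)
I would organise the argument in four stages: (1) existence of the limit together with the closed integral formula; (2) the base case $\pi=\{\{1,2\}\}$; (3) the recursion for an adjacent block $\{m,m+1\}$; (4) the two consequences (the value when $\varphi$ is constant, and the a priori bound). For stage (1) I would simply invoke the computation already carried out in the display \eqref{eq:Jwpilimit} just above the lemma. Since $w$ is Riemann integrable it is bounded by $W$, and for any finite graph $(V,\tilde{E})$ with $V\subseteq\N$ the function $\ubar{x}\mapsto\prod_{\{r,s\}\in\tilde{E}}w^2(\abs{x_r-x_s})$ is Riemann integrable on $[0,1]^{\#V}$: its set of discontinuities is contained in the finite union over $\{r,s\}\in\tilde{E}$ of the sets $\{\ubar{x}:\abs{x_r-x_s}\in D_w\}$, each of Lebesgue measure zero (by Fubini, since the discontinuity set $D_w$ of $w$ is a null set). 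Hence the Riemann sums in \eqref{eq:Jwpilimit} converge, so the limit defining $J_w(\pi)$ exists; and because $\ubar{t}^{\pi}$ is $\pi$-backtracking, each abstract edge occurs exactly twice along the walk, so $\prod_{\{r,s\}\in E_{\ubar{t}^{\pi}}}w(\abs{x_r-x_s})=\prod_{\{r,s\}\in\tilde{E}_{\ubar{t}^{\pi}}}w^2(\abs{x_r-x_s})$, which turns the limit of \eqref{eq:Jwpilimit} into the asserted formula for $J_w(\pi)$. Stage (2) is then immediate: for $\pi=\{\{1,2\}\}$ we have $\ubar{t}^{\pi}=(1,2)$, $V_{\ubar{t}^{\pi}}=\{1,2\}$ and $\tilde{E}_{\ubar{t}^{\pi}}=\{\{1,2\}\}$, so $J_w(\{\{1,2\}\})=\int_0^1\int_0^1 w^2(\abs{x-y})\,\de x\,\de y=\int_0^1\varphi(x)\,\de x$.

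For stage (3), fix a block $\{m,m+1\}\in\pi$. Along the canonical path $e_m$ is a first occurrence while $e_{m+1}$ re-uses it, so $t^{\pi}_m=t^{\pi}_{m+2}$; and since the only $\pi$-partner of position $m+1$ is $m$, the fresh label $v^*\defeq t^{\pi}_{m+1}$ is never revisited, hence is a leaf of the tree $(V_{\ubar{t}^{\pi}},\tilde{E}_{\ubar{t}^{\pi}})$ with unique neighbour $w^*\defeq t^{\pi}_m$. Because $x_{v^*}$ appears in exactly one factor of the product defining $J_w(\pi)$, Fubini's theorem lets me integrate it out first, producing the factor $\int_0^1 w^2(\abs{x_{w^*}-x_{v^*}})\,\de x_{v^*}=\varphi(x_{w^*})$ and leaving the integral of $\prod_{\{r,s\}\in\tilde{E}_{\ubar{t}^{\pi}}\setminus\{\{v^*,w^*\}\}}w^2(\abs{x_r-x_s})$ over $V_{\ubar{t}^{\pi}}\setminus\{v^*\}$. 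The combinatorial heart of this step is to verify that deleting the excursion $t^{\pi}_m\to t^{\pi}_{m+1}\to t^{\pi}_{m+2}$ from the closed walk $\ubar{t}^{\pi}$ yields a closed walk of length $k-2$ whose $\sim_n$-pattern is $\tilde{\pi}$ (after the relabelling $a\mapsto a-2$ on $\{m+2,\ldots,k\}$), and whose tree, with the vertex $w^*$ marked, is isomorphic to the tree of $\ubar{t}^{\tilde{\pi}}$ with the corresponding marked vertex $t^{\tilde{\pi}}_m$ — in other words, that eliminating an adjacent block of $\pi$ is precisely the operation of pruning the leaf $v^*$. Granting this, the leftover integral with $x_{w^*}$ held fixed is exactly $J_w(\tilde{\pi}\mid x_{w^*})$ by definition, and integrating over $x_{w^*}\in[0,1]$ gives $J_w(\pi)=\int_0^1\varphi(x_{t^{\pi}_m})J_w(\tilde{\pi}\mid x_{t^{\pi}_m})\,\de x_{t^{\pi}_m}$. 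I would also note that such a block always exists (choose $\{a,b\}\in\pi$ minimising $b-a$), that $\tilde{\pi}\in\Ncal\Pcal\Pcal(k-2)$, and hence that iterating the recursion reaches the base case.

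Stage (4) is short: if $\varphi\equiv c$, the recursion gives $J_w(\pi)=c\int_0^1 J_w(\tilde{\pi}\mid x)\,\de x=c\,J_w(\tilde{\pi})$, and induction on $k$ starting from $J_w(\{\{1,2\}\})=c$ yields $J_w(\pi)=c^{k/2}$; while from the closed formula, $\abs{w}\le W$, $\#\tilde{E}_{\ubar{t}^{\pi}}=k/2$ and the fact that the integration domain has volume $1$, we obtain $\abs{J_w(\pi)}\le(W^2)^{k/2}=W^k$. \textbf{The main obstacle} is the graph-isomorphism claim inside stage (3): that removing an adjacent block of $\pi$ corresponds exactly to pruning the leaf $t^{\pi}_{m+1}$ from the tree of $\ubar{t}^{\pi}$, and that what remains is the tree of $\ubar{t}^{\tilde{\pi}}$ with $t^{\pi}_m$ matched to $t^{\tilde{\pi}}_m$. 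Once this bijection is in place, Fubini's theorem and two one-line estimates finish the proof.
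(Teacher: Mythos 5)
Your proposal is correct and follows essentially the same route as the paper: existence and the closed integral formula come from the Riemann-sum computation in \eqref{eq:Jwpilimit}, and the recursion is obtained by observing that $t^{\pi}_{m+1}$ is a leaf of the backtracking tree, integrating it out via Fubini to produce the factor $\varphi(x_{t^{\pi}_m})$, and identifying the remaining integral with $J_w(\tilde{\pi}\mid x_{t^{\pi}_m})$ using $t^{\pi}_m=t^{\tilde{\pi}}_m$. You supply a few justifications the paper leaves implicit (Riemann integrability of the integrand, existence of an adjacent block, the induction for $c^{k/2}$), and the "pruning" identification you flag as the main obstacle is handled in the paper at the same level of assertion, so there is no gap relative to the paper's own argument.
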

\begin{proof}
The integral equation for $J_w(\pi)$ has been derived in the calculation of \eqref{eq:Jwpilimit}. For the recursion, note that $\{m,m+1\}\in\pi$ implies that $t^{\pi}_m=t^{\pi}_{m+2}$ and $t^{\pi}_{m+1}$ is unique in the tuple $\ubar{t}^{\pi}$. Further, $t^{\pi}_m=t_m^{\tilde{\pi}}$, where $\tilde{\pi}\defeq\pi\backslash\{m,m+1\}$. Therefore, 
\begin{align*}
J_w(\pi) &= \int_{[0,1]^{\frac{k}{2}+1}} \prod_{\{r,s\}\in E_{\ubar{t}^{\pi}}}w(\abs{x_r-x_s})\prod_{u\in V_{\ubar{t}^{\pi}}}\de x_u\\
&= \int_{[0,1]^{\frac{k}{2}}}\int_{[0,1]}w(\abs{x_{t^{\pi}_m}-x_{t^{\pi}_{m+1}}})w(\abs{x_{t^{\pi}_{m+1}}-x_{t^{\pi}_{m+2}}})\de x_{t^{\pi}_{m+1}}\ldots \\
&\qquad\qquad\ldots\prod_{\ell\neq m,m+1} w(\abs{x_{t^{\pi}_{\ell}}-x_{t^{\pi}_{\ell+1}}})\prod_{u\in V_{\ubar{t}^{\pi}}\backslash\{t^{\pi}_{m+1}\}}\de x_{u}\\
&= \int_{[0,1]} \varphi(x_{t^{\pi}_m})\int_{[0,1]^{\frac{k}{2}-1}}\prod_{\ell\neq m,m+1} w(\abs{x_{t^{\pi}_{\ell}}-x_{t^{\pi}_{\ell+1}}})\left(\prod_{\substack{u\in V_{\ubar{t}^{\pi}}\\ u\neq t^{\pi}_m,t^{\pi}_{m+1}}}\de x_{u}\right)\de x_{t^{\pi}_m}\\
&= \int_0^1 \varphi(x_{t^{\pi}_m}) J_{w}(\tilde{\pi}|x_{t^{\pi}_m})\de x_{t^{\pi}_m}.
\end{align*}
\end{proof}
 
 From the findings of this section and Theorem~\ref{thm:momentconvergence}, we conclude
 \begin{corollary}
 \label{cor:weakconvergence}
 In the setting of Theorem~\ref{thm:weightedmatrix}, the ESDs $(\sigma_n)_n$ converge weakly in probability to a deterministic probability measure $\mu$ on $\R$ with moments
 \[
 \forall\,k\in\N:\ \integrala{\mu}{x^k} = \sum_{\pi\in\Ncal\Pcal\Pcal(k)}J_w(\pi).
 \]
Further, $\sigma_n\to\mu$ weakly almost surely if conditions $(E1)$ and $(E3)$ are strengthened to $(E1')$ and $(E3')$ respectively. The weak limit $\mu$ has compact support and vanishing odd moments. In particular, it is symmetric and uniquely determined by its moments. 
 \end{corollary}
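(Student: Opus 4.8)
The plan is to deduce Corollary~\ref{cor:weakconvergence} from Theorem~\ref{thm:momentconvergence} by the method of moments. Theorem~\ref{thm:momentconvergence} already gives that for every $k\in\N$ the random variable $\integrala{\sigma_n}{x^k}$ converges to the deterministic constant $m_k\defeq\sum_{\pi\in\Ncal\Pcal\Pcal(k)}J_w(\pi)$ — in expectation and in probability under $(E1)$, $(E2)$, $(E3)$, and also almost surely when $(E1)$, $(E3)$ are strengthened to $(E1')$, $(E3')$ and the $C^{(\ell)}_n$ decay summably fast; we also set $m_0\defeq 1$. It therefore remains to show (a) that $(m_k)_{k\geq 0}$ is the moment sequence of a \emph{unique}, compactly supported, symmetric probability measure $\mu$ on $(\R,\Bcal)$, and (b) that moment convergence in a given mode upgrades to weak convergence $\sigma_n\to\mu$ in the same mode.

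For (a), first observe that $\Ncal\Pcal\Pcal(k)=\emptyset$ for odd $k$, so $m_k=0$ for all odd $k$; this yields the vanishing of the odd limiting moments. For the growth bound, Lemma~\ref{lem:Jwpi} gives $\abs{J_w(\pi)}\leq W^k$ for every $\pi\in\Ncal\Pcal\Pcal(k)$, and $\#\Ncal\Pcal\Pcal(2m)=\Cat_m\leq 4^m$, where $\Cat_m$ is the $m$-th Catalan number. Hence $\abs{m_{2m}}\leq \Cat_m W^{2m}\leq(2W)^{2m}$, so $m_{2m}^{1/(2m)}\leq 2W$ is bounded in $m$ and Carleman's condition $\sum_{m\geq 1}m_{2m}^{-1/(2m)}=\infty$ holds (the degenerate case $W=0$, i.e.\ $w=0$ $\lebesgue$-a.e., being trivial since then $\sigma_n=\delta_0=\mu$). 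By the moment convergence theorem — a pointwise limit of moment sequences of probability measures that obeys the Carleman bound is itself the moment sequence of a unique probability measure — there is a unique probability measure $\mu$ with $\integrala{\mu}{x^k}=m_k$ for all $k$. Since $\big(\int x^{2m}\,\mu(\de x)\big)^{1/(2m)}=m_{2m}^{1/(2m)}\leq 2W$ and this quantity converges to $\norm{x}_{L^{\infty}(\mu)}$ as $m\to\infty$, the support of $\mu$ lies in $[-2W,2W]$ and is in particular compact; and since the reflected measure $\mu(-\,\cdot\,)$ has exactly the same moments as $\mu$ (odd ones vanishing, even ones agreeing), uniqueness of $\mu$ forces it to be symmetric.

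For (b), we invoke Lemma~\ref{lem:convergencemodes} (equivalently, the principle recalled at the start of Section~\ref{sec:bandmatrix} from \cite{Fleermann:2015,Fleermann:2019}): because the limiting moment sequence $(m_k)_k$ determines $\mu$ uniquely, convergence of $\integrala{\sigma_n}{x^k}$ to $\integrala{\mu}{x^k}$ for every $k$ — in expectation, in probability, or almost surely — implies $\sigma_n\to\mu$ weakly in the corresponding mode. Feeding in the convergence statements of Theorem~\ref{thm:momentconvergence} then yields weak convergence in probability under $(E1)$, $(E2)$, $(E3)$, and weak convergence almost surely under $(E1')$, $(E2)$, $(E3')$ with summably fast $C^{(\ell)}_n$. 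The only step that requires any genuine care — the ``main obstacle'', such as it is — is the growth estimate in (a): one must pin down the crude bound $\abs{m_{2m}}\leq(2W)^{2m}$ (via $\abs{J_w(\pi)}\leq W^k$ and the Catalan count) in order to legitimately invoke Carleman's criterion and the moment convergence theorem; once this is in place, everything else is bookkeeping against results already established in the excerpt.
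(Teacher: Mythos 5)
Your proposal is correct and follows essentially the same route as the paper: moment convergence from Theorem~\ref{thm:momentconvergence}, the bound $\abs{J_w(\pi)}\leq W^k$ from Lemma~\ref{lem:Jwpi} combined with the Catalan count to verify Carleman's condition and hence uniqueness, the method of moments for random probability measures to upgrade to weak convergence in each mode, and compact support plus symmetry from the moment growth bound and the vanishing odd moments. The only cosmetic quibble is that the upgrade from moment convergence to weak convergence is not Lemma~\ref{lem:convergencemodes} (which concerns real-valued random variables) but the cited result of \cite{Fleermann:2019} — which you correctly identify in your parenthetical, so the substance is unaffected.
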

 \begin{proof}
 The limiting moments in Theorem~\ref{thm:momentconvergence} are bounded by $W^k\#\Ncal\Pcal\Pcal(k)\one_{2\N}(k)\leq (4W)^k\one_{2\N}(k)$, where we used  Lemma~\ref{lem:Jwpi} and a well-known bound on the Catalan-numbers (e.g.\ \cite{Anderson:Guionnet:Zeitouni:2010}). Therefore, the moments satisfy the Carleman condition, thus admit at most one probability measure. By the method of moments for random probability measures (Theorem 3.5 in \cite{Fleermann:2019}), the weak convergence statements in Corollary~\ref{cor:weakconvergence} follow from the stochastic moment convergence in Theorem~\ref{thm:momentconvergence}.
 Also, by the bound on the limiting moments which we identified in the beginning of the proof, and by Lemma 3.13 in \cite{Nica:Speicher:2006}, $\mu$ has compact support. Therefore, the vanishing odd moments allow to conclude that $\mu$ is symmetric (e.g. \cite[p.134]{Tao:2012}).
 \end{proof}

\subsubsection{Characterization of the semicircle law.}
\label{sec:characterization}
We have seen that the ESDs of the ensemble $(X_n)_n$ as in Theorem~\ref{thm:weightedmatrix} converge weakly to the unique symmetric probability distribution $\mu$ on $(\R,\Bcal)$ with moments

\[
\forall\, k\in\N: \integrala{\mu}{x^k} = \sum_{\pi\in\Ncal\Pcal\Pcal(k)} J_{w}(\pi).
\]
and limiting variance 
\[
\varphi_0 \defeq  \int_0^1\varphi(x)\de x,
\]
where
\[
\forall\, x\in [0,1]:~\varphi(x)=\int_0^1 w^2(\abs{x-y}) \de y.
\]

It follows that the ESDs of $(\varphi_0^{-1/2}X_n)_n$ have limiting variance $1$, and the question now is when the semicircle law holds:

\begin{lemma}
Let $(X_n)_n$ be an ensemble as in Theorem~\ref{thm:weightedmatrix}. 
\begin{enumerate}
\item The asymptotic variance $\varphi_0$ can be calculated by
\[
\varphi_0 = 2\int_0^1 (1-x)w^2(x)\de x.
\]
In particular, the following statements are equivalent:
\begin{enumerate}[a)]
\item $\varphi_0 =0$.
\item $w = 0$ on $[0,1]$ $\lebesgue$-almost surely.	
\end{enumerate}

\item Assume that $\varphi_0>0$. Then the following statements are equivalent:
\begin{enumerate}[a)]
\item The semicircle law holds for $\frac{1}{\sqrt{\varphi_0}}X_n$.
\item $\varphi$ is constant, in particular, $\varphi\equiv\varphi_0$.
\item $\varphi \equiv \int_0^1 w^2(x)\de x$,
\item $w^2$ is $\lebesgue$-a.s. symmetric around $1/2$, i.e.\ $w^2(x)=w^2(1-x)$ for $\lebesgue$-a.a. $x\in[0,1]$.
\end{enumerate}
\end{enumerate}

\end{lemma}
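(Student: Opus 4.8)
The plan is to prove the two parts of the final lemma in turn, using the moment formula $\integrala{\mu}{x^k} = \sum_{\pi\in\Ncal\Pcal\Pcal(k)} J_w(\pi)$ together with the recursion for $J_w(\pi)$ from Lemma~\ref{lem:Jwpi}. For part (1), I would start from $\varphi_0 = \int_0^1\varphi(x)\,\de x = \int_0^1\int_0^1 w^2(\abs{x-y})\,\de y\,\de x$ and change variables (or split the square $[0,1]^2$ along the diagonal and use $u = \abs{x-y}$); this is a routine Fubini/substitution computation yielding the density $2(1-u)$ of $\abs{X-Y}$ for $X,Y$ independent uniform on $[0,1]$, hence $\varphi_0 = 2\int_0^1(1-u)w^2(u)\,\de u$. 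The equivalence of $\varphi_0 = 0$ with $w = 0$ $\lebesgue$-a.e.\ then follows since $2(1-u) > 0$ for $u\in[0,1)$: the integral of the nonnegative function $2(1-u)w^2(u)$ vanishes iff $w^2 = 0$ a.e.\ on $[0,1)$, which is the same as $w = 0$ a.e.\ on $[0,1]$.

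For part (2), assume $\varphi_0 > 0$. The implications b)$\Leftrightarrow$c)$\Leftrightarrow$d) I would handle first since they are the analytic core. The equivalence c)$\Leftrightarrow$d): $\varphi(x) = \int_0^1 w^2(\abs{x-y})\,\de y$, and substituting $y\mapsto 1-y$ gives $\varphi(x) = \int_0^1 w^2(\abs{x-1+y})\,\de y = \int_0^1 w^2(\abs{(1-x)-y})\,\de y\cdot$ — wait, more carefully, $\varphi(1-x) = \int_0^1 w^2(\abs{1-x-y})\,\de y$, and the substitution $y\mapsto 1-y$ in $\varphi(x)$ shows $\varphi(x) = \int_0^1 w^2(\abs{x-1+y})\,\de y = \varphi(1-x)$ always, so $\varphi$ is automatically symmetric around $1/2$. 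Actually I should be careful here: $\varphi$ is always symmetric around $1/2$, so the real content of d) is about $w^2$ itself, not $\varphi$. The cleaner route: $\varphi \equiv \int_0^1 w^2(x)\,\de x$ iff $\int_0^1 w^2(\abs{x-y})\,\de y = \int_0^1 w^2(y)\,\de y$ for a.e.\ $x$; expanding the left side as $\int_0^x w^2(x-y)\,\de y + \int_x^1 w^2(y-x)\,\de y = \int_0^x w^2(u)\,\de u + \int_0^{1-x} w^2(u)\,\de u$. Setting $G(t) \defeq \int_0^t w^2(u)\,\de u$, condition c) becomes $G(x) + G(1-x) = G(1)$ for a.e.\ (hence, by continuity of $G$, all) $x\in[0,1]$; differentiating gives $w^2(x) = w^2(1-x)$ for a.e.\ $x$, which is d), and conversely d) immediately gives $G(x)+G(1-x) = G(1)$. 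For b)$\Rightarrow$c): if $\varphi\equiv c$ then integrating gives $c = \varphi_0$, and also $c = \varphi(0) = \int_0^1 w^2(y)\,\de y$; and c)$\Rightarrow$b) is the observation just made that $\varphi$ symmetric plus $\varphi(0) = \varphi(1) = \int_0^1 w^2$ combined with... actually simpler: c) literally states $\varphi \equiv \int_0^1 w^2(x)\,\de x$, which is a constant, so c)$\Rightarrow$b) is immediate, and b)$\Rightarrow$c) follows by evaluating the constant at $x=0$.

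The remaining and most interesting equivalence is a)$\Leftrightarrow$b). For b)$\Rightarrow$a): if $\varphi\equiv\varphi_0$, then by Lemma~\ref{lem:Jwpi}, $J_w(\pi) = \varphi_0^{k/2}$ for every $\pi\in\Ncal\Pcal\Pcal(k)$, so $\integrala{\mu}{x^k} = \varphi_0^{k/2}\#\Ncal\Pcal\Pcal(k) = \varphi_0^{k/2}\Cat_{k/2}$ for even $k$ and $0$ for odd $k$; these are exactly the moments of the semicircle distribution with variance $\varphi_0$, so $\frac{1}{\sqrt{\varphi_0}}X_n$ satisfies the semicircle law. For a)$\Rightarrow$b), the plan is to compare the fourth moments: if the semicircle law holds for $\frac{1}{\sqrt{\varphi_0}}X_n$, then $\integrala{\mu}{x^4} = 2\varphi_0^2$. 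On the other hand, $\integrala{\mu}{x^4} = \sum_{\pi\in\Ncal\Pcal\Pcal(4)} J_w(\pi)$ where $\Ncal\Pcal\Pcal(4) = \{\{\{1,2\},\{3,4\}\}, \{\{1,4\},\{2,3\}\}\}$. For the partition $\{\{1,2\},\{3,4\}\}$, the canonical backtracking path has edge set giving $\int_{[0,1]^3} w^2(\abs{x_1-x_2})w^2(\abs{x_2-x_3})\,\de x_1\de x_2\de x_3 = \int_0^1 \varphi(x_2)^2\,\de x_2$, and similarly (after checking the tree structure) the other partition contributes $\int_0^1 \varphi(x)^2\,\de x$ as well — or possibly $\varphi_0^2$; I need to verify from the tree/recursion which it is. The key point is that $\integrala{\mu}{x^4}$ will equal $\int_0^1\varphi(x)^2\,\de x + (\text{something} \le \varphi_0^2 \text{ or } {} = \varphi_0^2)$, and by Cauchy–Schwarz $\int_0^1\varphi(x)^2\,\de x \ge \left(\int_0^1\varphi(x)\,\de x\right)^2 = \varphi_0^2$ with equality iff $\varphi$ is a.e.\ constant, i.e.\ iff b) holds. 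So $\integrala{\mu}{x^4} \ge 2\varphi_0^2$ with equality iff $\varphi$ is a.e.\ constant; since the semicircle value is exactly $2\varphi_0^2$, a) forces equality in Cauchy–Schwarz, hence b). The main obstacle is correctly computing $J_w$ for both non-crossing pair partitions of $\oneto{4}$ via the recursion of Lemma~\ref{lem:Jwpi} and confirming that the sum is $\int_0^1\varphi(x)^2\,\de x + \varphi_0^2$ (so that the Cauchy–Schwarz argument gives exactly the threshold $2\varphi_0^2$) rather than $2\int_0^1\varphi(x)^2\,\de x$ (which would still work, via $\int\varphi^2 \ge \varphi_0^2$) — either way the inequality direction is what matters, but the bookkeeping of which vertices are identified along $\ubar{t}^\pi$ must be done carefully.
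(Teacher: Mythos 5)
Your proposal is correct and follows essentially the same route as the paper: part (1) by a routine Fubini/substitution computation, b)$\Rightarrow$a) via $J_w(\pi)=\varphi_0^{k/2}$, a)$\Rightarrow$b) by comparing fourth moments and invoking strict Jensen/Cauchy--Schwarz (the paper confirms both partitions in $\Ncal\Pcal\Pcal(4)$ contribute $\int_0^1\varphi^2$, so the sum is $2\int_0^1\varphi^2\ge 2\varphi_0^2$ with equality iff $\varphi$ is a.e.\ constant), and b)$\Leftrightarrow$c)$\Leftrightarrow$d) by writing $\varphi(x)=\int_0^x w^2+\int_0^{1-x}w^2$ and differentiating at points of continuity of the Riemann-integrable integrand. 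The only loose end is the bookkeeping of $J_w$ for the two non-crossing pair partitions of $\oneto{4}$, which you correctly note does not affect the conclusion either way.
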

\begin{proof}
We prove $(2)$ first:
Denote by $\tilde{\mu}$ the limiting spectral distribution of the ESDs of $\frac{1}{\sqrt{\varphi_0}}X_n$. Then
\begin{equation}
\label{eq:mutilde}
\forall\,k\in\N: \integrala{\tilde{\mu}}{x^k} =\frac{1}{\varphi_0^{k/2}}\sum_{\pi\in\Ncal\Pcal\Pcal(k)} J_{w}(\pi).
\end{equation}
\noindent\underline{$b)\Rightarrow a)$} This follows immediately with \eqref{eq:mutilde} and the last statement in Lemma~\ref{lem:Jwpi}. \newline
\underline{$a)\Rightarrow b)$}: Assume that $\varphi$ is not $\lebesgue$-a.s.\ constant on $[0,1]$. Denote by $\tilde{\mu}$ the LSD of $\varphi_0^{-1/2}X_n$. Then
\[
\integrala{\tilde{\mu}}{x^4} = \frac{1}{\varphi_0^2}\integrala{\mu}{x^2}= \frac{1}{\varphi_0^2}\sum_{\pi\in\Ncal\Pcal\Pcal(k)} J_{w}(\pi).
\]
Since $\Ncal\Pcal\Pcal(k) = \{\pi_1,\pi_2\}$, where $\pi_1=\{\{1,2\},\{3,4\}\}$ and $\pi_2=\{\{1,4\},\{2,3\}\}$ and $\ubar{t}^{\pi_1}=(1,2,1,3)$, $\ubar{t}^{\pi_2}=(1,2,3,2)$, we obtain
\[
J_w(\pi_1)=\int_{[0,1]^3}w^2(\abs{x-y})w^2(\abs{x-z})\de x \de y \de z = \int_0^1\varphi^2(x)\de x.
\]
and
\[
J_w(\pi_2)=\int_{[0,1]^3}w^2(\abs{x-y})w^2(\abs{y-z})\de x \de y \de z = \int_0^1\varphi^2(y)\de y.
\]
As a result,
\[
\integrala{\tilde{\mu}}{x^4} = \frac{2}{\varphi_0^2}\int_0^1 \varphi^2(x)\de x > \frac{2}{\varphi_0^2} \bigabs{\int_0^1\varphi(x)\de x}^2 = 2 = \#\Ncal\Pcal\Pcal(4) = \integrala{\sigma}{x^4}
\]
where the second step follows from Jensen's strict inequality, since $\varphi$ is not constant $\lebesgue$-a.s.\newline
\underline{$b)\Leftrightarrow d)\Rightarrow c)$}:  We calculate
\begin{align}
\varphi(x)&=\int_0^x w^2(x-y)\de y + \int_x^1 w^2(y-x)\de y\notag\\
&= \int_0^x w^2(z)\de z + \int_0^{1-x}w^2(z)\de z\\
&= \int_0^1 w^2(z)\de z - \int_x^1 w^2(z)\de z + \int_x^1 w^2(1-z)\de z\notag\\
&= \int_0^1 w^2(z)\de z + \int_x^1\left[w^2(1-z) - w^2(z)\right] \de z \label{eq:phiequation}
\end{align}
Now let $v(z)\defeq w^2(1-z) - w^2(z)$ for all $v\in[0,1]$. Since $v$ is Riemann integrable on $[0,1]$, it is continuous on a set $C\subseteq[0,1]$ with $\lebesgue(C)=1$. But then with \eqref{eq:phiequation}, $\varphi$ is differentiable at every point $z\in C$ with derivative $w^2(z)-w^2(1-z)$. \newline
Now if $b)$ holds, that is, $\varphi$ is constant, then 
\[
\forall\, z\in C: 0=\varphi'(z)=w^2(z)-w^2(1-z),
\]
which shows statement $d)$. On the other hand, if $d)$ holds, then the second integral in \eqref{eq:phiequation} vanishes, so $\varphi$ is constant with $\varphi(x) = \int_0^1w^2(z)\de z$ for all $x\in[0,1]$, which shows $c)$.\newline
\underline{$c)\Rightarrow a)$} This is immediate.
For $(1)$, we start with equation \eqref{eq:phiequation} and obtain
\begin{align*}
&\varphi_0 = \int_0^1\varphi(x) \de x \\
&=\int_0^1 \int_0^1 w^2(z)\de z \de x + \int_0^1\int_x^1\left[w^2(1-z) - w^2(z)\right] \de z \de x\\
&= \int_0^1 w^2(z)\de z + \int_0^1\int_0^z\left[w^2(1-z) - w^2(z)\right]\de x \de z\\
&= \int_0^1(1-z)w^2(z)\de z + \int_0^1 z w^2(1-z)\de z = 2 \int_0^1(1-z)w^2(z)\de z
\end{align*}
and the last integral is zero iff $w^2(z)=0$ $\lebesgue$-a.s.\ iff $w(z)=0$ $\lebesgue$-a.s.
\end{proof}

\appendix
\section{Proof of Lemma~\ref{lem:AUGaussian}}
\label{sec:ProofAUGaussian}
\begin{proof}
Let $\ell,n\in\N$ and $P_1,\ldots,P_{\ell}\in\oneto{n}^2$ be fundamentally different, and let $\delta_1,\ldots,\delta_{\ell}\in\N$ be arbitrary. Then we calculate (with explanations below)
\begin{align*}
&\E a_n(P_1)^{\delta_1}\cdots a_n(P_{\ell})^{\delta_{\ell}} \ =\ \E \prod_{j=1}^{\delta_1+\ldots+\delta_{\ell}} a_n(P_{i(j)})\\
&=\ \sum_{\pi\in\Pcal\Pcal(\delta_1+\ldots+\delta_{\ell})}\prod_{\{r,s\}\in\pi}\E a_n(P_{i(r)})a_n(P_{i(s)})\ \substack{\leq \\\abs{\ldots}}\ \frac{\#\Pcal\Pcal(\delta_1+\ldots+\delta_{\ell})}{n^{\frac{1}{2}\#\{i\,|\,\delta_i=1\}}}
\end{align*}
For the first step, we set $i(1)=\ldots = i(\delta_1)=1$, $i(\delta_1+1) =\ldots = i(\delta_1+\delta_2) = 2,$ and so on. In the second step, we apply Isserlis formular, see \cite{Isserlis:1918} or \cite{Nica:Speicher:2006}. The third step holds after taking the absolute value on the l.h.s., and then the inequality follows since for all $\pi\in\Pcal\Pcal(\delta_1+\ldots+\delta_{\ell})$, 
\[
\prod_{\{r,s\}\in\pi} \abs{\E a_n(P_{i(r)})a_n(P_{i(s)})} \ = \ \prod_{\{r,s\}\in\pi} \abs{\Sigma_n(P_{i(r)},P_{i(s)})}\ \leq\ \frac{1}{n^{\frac{1}{2}\#\{i\,|\,\delta_i=1\}}}, 
\]
since in the worst case, all single random variables are paired by the partition $\pi$. This proves \eqref{eq:AU1} with constants $C(\ell)\defeq\#\Pcal\Pcal(\ell)$, and for \eqref{eq:AU2} we calculate
\begin{align*}
&\E a_n(P_1)^2\cdots a_n(P_{\ell})^2 \ =\ \E \prod_{j=1}^{2\ell} a_n(P_{i(j)})\ =\ \sum_{\pi\in\Pcal\Pcal(2\ell)}\prod_{\{r,s\}\in\pi}\E a_n(P_{i(r)})a_n(P_{i(s)})\\
& = \ 1\ +\ \sum_{\pi\in\Pcal\Pcal(2\ell)\backslash\{\pi^*\}}\prod_{\{r,s\}\in\pi}\E a_n(P_{i(r)})a_n(P_{i(s)}),
\end{align*}
where in the first step we set $i(1)=i(2)=1$, $i(3)=i(4)=2$, etc., in the second step we apply Isserlis' formula, and for the third step we write
$\pi^*= \{\{1,2\},\ldots\{2\ell-1,2\ell\}\}$. Since each $\pi\in\Pcal\Pcal(2\ell)\backslash\{\pi^*\}$ has at least two blocks which do not pair the same index, we find
\[
\prod_{\{r,s\}\in\pi}\abs{\E a_n(P_{i(r)})a_n(P_{i(s)})} \leq \prod_{\{r,s\}\in\pi}\abs{\Sigma_n(P_{i(r)},P_{i(s)})} \leq \frac{1}{n^2}. 
\]
Consequently, \eqref{eq:AU2} holds with sequences $C^{(\ell)}_n\defeq \Pcal\Pcal(2\ell)/n^2$, which are all summable over $n$.
\end{proof}

\section{Auxiliary Lemmata}


\begin{lemma}
\label{lem:breakaway}
Let $(I_n)_n$ be a sequence of finite index sets and let for all $n\in\N$, $(Y_n(i))_{i\in I_n}$ be a family of random variables with uniformly bounded absolute moments of all orders. 
\begin{enumerate}[a)]
\item If $(m_n)_n$ is a sequence of positive real numbers with $\#I_n = o(m_n)$, then
\[
\frac{1}{m_n}\sum_{i\in I_n}Y_n(i)\ \xrightarrow[n\to\infty]{} \ 0 \qquad \text{in expectation and in probability}.
\] 
\item If $(m_n)_n$ is a sequence of real numbers so that for some $p\in\N$, $(\#I_n/m_n)^p$ is summable (e.g.\ $m_n=n^{\delta}\#I_n$ for some $\delta>0$), then
\[
\frac{1}{m_n}\sum_{i\in I_n}Y_n(i) \ \xrightarrow[n\to\infty]{}\  0 \qquad \text{in expectation and almost surely}.
\] 
\end{enumerate}
\end{lemma}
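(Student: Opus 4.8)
The plan is to use the uniform moment bounds together with Markov's inequality, handling the two parts in parallel. First I would record that, by hypothesis, for each $k\in\N$ there is a constant $M_k<\infty$ with $\sup_n\sup_{i\in I_n}\E\abs{Y_n(i)}^k\leq M_k$. Writing $S_n\defeq\frac{1}{m_n}\sum_{i\in I_n}Y_n(i)$, the convergence in expectation in both a) and b) is immediate from the triangle inequality: $\E\abs{S_n}\leq\frac{1}{m_n}\sum_{i\in I_n}\E\abs{Y_n(i)}\leq M_1\cdot\frac{\#I_n}{m_n}$, and this tends to $0$ since $\#I_n=o(m_n)$ in case a), and since summability of $(\#I_n/m_n)^p$ forces $\#I_n/m_n\to 0$ in case b). The convergence in probability in a) then follows because $L^1$-convergence implies convergence in probability (equivalently, apply Markov's inequality to the bound just obtained).

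For the almost sure convergence in b), the key step is a higher-moment estimate combined with the Borel--Cantelli lemma. By Minkowski's inequality in $L^p$,
\[
\left(\E\Bigl|\sum_{i\in I_n}Y_n(i)\Bigr|^p\right)^{1/p}\ \leq\ \sum_{i\in I_n}\left(\E\abs{Y_n(i)}^p\right)^{1/p}\ \leq\ \#I_n\cdot M_p^{1/p},
\]
hence $\E\abs{S_n}^p\leq M_p\,(\#I_n/m_n)^p$. For any $\epsilon>0$, Markov's inequality gives $\Prob(\abs{S_n}>\epsilon)\leq\epsilon^{-p}M_p(\#I_n/m_n)^p$, and the right-hand side is summable in $n$ by assumption. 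Borel--Cantelli yields $\Prob(\abs{S_n}>\epsilon \text{ for infinitely many } n)=0$; taking $\epsilon$ along a sequence decreasing to $0$ gives $S_n\to 0$ almost surely.

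The only point requiring a moment of care --- hardly an obstacle --- is matching the moment order to the summability exponent: one must use exactly the $p$-th moment (rather than, say, the second moment as in a naive variance argument), so that the Markov bound reproduces precisely the summable sequence $(\#I_n/m_n)^p$ appearing in the hypothesis. The parenthetical example $m_n=n^{\delta}\#I_n$ fits this scheme because then $(\#I_n/m_n)^p=n^{-\delta p}$, which is summable as soon as $p$ is chosen with $p\delta>1$.
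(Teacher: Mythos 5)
Your proof is correct and follows essentially the same route as the paper: a $p$-th moment bound on $\frac{1}{m_n}\sum_{i\in I_n}Y_n(i)$ of order $(\#I_n/m_n)^p$, followed by Markov's inequality and Borel--Cantelli. The only cosmetic difference is that you obtain the moment bound via Minkowski's inequality, whereas the paper expands the $k$-fold sum and bounds each mixed moment $\E\abs{Y_n(i_1)\cdots Y_n(i_k)}$ directly; the two computations are equivalent.
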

\begin{proof}
Let $(D_k)_k$ be positive constants for all $k\in\N$ such that for all $n\in\N$, $i\in I_n$, and $k\in\N$: $\E\abs{Y_n(i)}^k\leq D_k$. Choose $\epsilon>0$ and $k\in\N$ arbitrarily.
Then 
\begin{align*}
\Prob\left(\bigabs{ \frac{1}{m_n}\sum_{i\in I_n}Y_n(i)}>\epsilon \right) &\ \leq\ \left(\frac{\#I_n}{m_n}\right)^k\frac{1}{\epsilon^k}\cdot\frac{1}{(\#I_n)^k}\sum_{\ubar{i}\in I^k_n}\E \abs{Y_n(i_1)\cdots Y_n(i_k)} \\
&\ \leq \ \left(\frac{\#I_n}{m_n}\right)^k\cdot\frac{D_k}{\epsilon^k}.
\end{align*}
For part $a)$ choose $k=1$, and for part $b)$ choose $k=p$.
Convergence in expectation to $0$ is trivial.
\end{proof}

\begin{lemma}
\label{lem:convergencemodes}
Let $z\in\N$ and $(Y_n)_n$ be random variables with $\E\abs{Y_n}^z < \infty$ for all $n\in\N$. If $\E Y_n \to y$ and $\E\abs{Y_n-\E Y_n}^z \to 0$, then $Y_n\to y$ in probability. If in addition, $\E\abs{Y_n-\E Y_n}^z$ is summable, then $Y_n\to y$ almost surely.
\end{lemma}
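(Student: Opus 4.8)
The plan is to derive both statements from a single Markov-type estimate, namely an application of Markov's inequality to the nonnegative random variable $\abs{Y_n - \E Y_n}^z$, combined with a triangle inequality that lets us swap the random centering $\E Y_n$ for the deterministic limit $y$, and the first Borel--Cantelli lemma for the almost sure part.

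First I would prove convergence in probability. Fix $\epsilon > 0$. Since $\E Y_n \to y$, there is $N$ with $\abs{\E Y_n - y} \leq \epsilon/2$ for $n \geq N$, whence $\{\abs{Y_n - y} > \epsilon\} \subseteq \{\abs{Y_n - \E Y_n} > \epsilon/2\}$ for such $n$. Markov's inequality applied to $\abs{Y_n-\E Y_n}^z$ then gives
\[
\Prob\left(\abs{Y_n - \E Y_n} > \tfrac{\epsilon}{2}\right) \leq \frac{2^z}{\epsilon^z}\, \E\abs{Y_n - \E Y_n}^z \xrightarrow[n\to\infty]{} 0,
\]
so $\Prob(\abs{Y_n - y} > \epsilon) \to 0$, which is the first claim.

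For the almost sure statement, assuming additionally that $\sum_n \E\abs{Y_n - \E Y_n}^z < \infty$, the same estimate shows $\sum_n \Prob(\abs{Y_n - \E Y_n} > \epsilon) < \infty$ for each fixed $\epsilon > 0$; the first Borel--Cantelli lemma then yields $\Prob(\limsup_n\{\abs{Y_n - \E Y_n} > \epsilon\}) = 0$. Applying this along $\epsilon = 1/m$, $m \in \N$, and intersecting the resulting full-probability events shows $Y_n - \E Y_n \to 0$ almost surely; adding the deterministic convergence $\E Y_n \to y$ gives $Y_n \to y$ almost surely.

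There is no real obstacle here, as the result is entirely standard. The only subtlety worth stating explicitly is that Borel--Cantelli must be invoked once per $\epsilon$ and the exceptional null sets aggregated over the countable family $\epsilon = 1/m$ before one may conclude almost sure convergence; likewise, the deterministic shift by $\E Y_n \to y$ does not enlarge the null set.
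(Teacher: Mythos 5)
Your proof is correct and follows essentially the same route as the paper: a triangle-inequality split to replace the centering $\E Y_n$ by $y$, Markov's inequality applied to $\abs{Y_n-\E Y_n}^z$, and the first Borel--Cantelli lemma for the almost sure part. The only cosmetic difference is that you phrase the split as a set inclusion valid for $n\geq N$, whereas the paper keeps the deterministic term $\Prob(\abs{\E Y_n - y}>\epsilon/2)$ explicitly and notes it vanishes for large $n$.
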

\begin{proof}
Using Markov's inequality, we calculate for $\epsilon>0$ arbitrary:
\begin{align*}
\Prob(\abs{Y_n-y}>\epsilon)&\ \leq\ \Prob\left(\abs{Y_n -\E Y_n}>\frac{\epsilon}{2}\right)\ +\ \Prob\left(\abs{\E Y_n - y}>\frac{\epsilon}{2}\right)\\ 
&\leq\ \frac{2^z}{\epsilon^z}\E\abs{Y_n-\E Y_n}^z\ +\ \Prob\left(\abs{\E Y_n - y}>\frac{\epsilon}{2}\right).
\end{align*}
The statement follows (also using Borel-Cantelli), since the very last summand vanishes for all $n$ large enough.
\end{proof}

\sloppy
\printbibliography

\vspace{1cm}
\noindent\textsf{(Riccardo Catalano, Michael Fleermann, and Werner Kirsch)\newline
FernUniversit\"at in Hagen\newline
Fakult\"at f\"ur Mathematik und Informatik\newline 
Universit\"atsstra\ss e 1\newline 
58084 Hagen}\newline
\textit{E-mail addresses:}\newline
\texttt{riccardo.catalano@fernuni-hagen.de}\newline
\texttt{michael.fleermann@fernuni-hagen.de}\newline
\texttt{werner.kirsch@fernuni-hagen.de}
\vspace{1cm}

\end{document}